\author{Henri Elad Altman}
\address{Sorbonne Universit\'e, CNRS, Laboratoire de Probabilit\'es Statistique et Mod\'elisation (LPSM), F-75005 Paris, France}
\email{eladaltman@lpsm.paris}
\title{Bessel SPDEs with general Dirichlet boundary conditions}
\date{\today}
\newfont{\indic}{bbmss12}
\newcommand{\R}{\mathbb R}
\renewcommand{\d}{\, \mathrm{d}}
\theoremstyle{plain}
\newtheorem{thm}{Theorem}[section]
\newtheorem{lm}[thm]{Lemma}
\newtheorem{prop}[thm]{Proposition} 
\newtheorem{cor}[thm]{Corollary} 
\theoremstyle{definition}
\newtheorem{df}[thm]{Definition}
\newtheorem{rk}[thm]{Remark}
\numberwithin{equation}{section}
\begin{document}
\maketitle


\begin{abstract}
We generalise the integration by parts formulae obtained in \cite{EladAltman2019} to Bessel bridges on $[0,1]$ with arbitrary boundary values, as well as Bessel processes with arbitrary initial conditions. This allows us to write, formally, the corresponding dynamics using renormalised local times, thus extending the Bessel SPDEs of \cite{EladAltman2019} to general Dirichlet boundary conditions. We also prove a dynamical result for the case of dimension $2$, by providing a weak construction of the gradient dynamics corresponding to a $2$-dimensional Bessel bridge. 
\end{abstract}

\section{Introduction}

\subsection{Bessel SPDEs}

The purpose of this paper is to further extend the results obtained recently in \cite{EladAltman2019}, and which introduced Bessel SPDEs of dimension smaller than $3$.
 
Bessel processes are a one-parameter family of nonnegative real-valued diffusions which play a central role in various fields, ranging from statistical mechanics to finance. 
From the perspective of stochastic analysis, they appear naturally in the study of Brownian motion, see e.g. sections VI.3 and XI.2 in \cite{revuz2013continuous}, but they also provide a highly non-trivial example of stochastic process for which the theory of stochastic calculus due to Kiosy It\^{o} allows to derive numerous remarkable results.  Recall that, for any $\delta \geq 0$, a $\delta$-dimensional Bessel process $(\rho_t)_{t\geq 0}$ is defined as $\rho_t = \sqrt{X}_t$, where $(X_t)_{t \geq 0}$ is a $\delta$-dimensional squared Bessel process, which is in turn the unique, nonnegative, solution to the SDE
\[X_t=X_0+\int_0^t2\sqrt{X_s} \d B_s +\delta \,t, \quad t\geq 0, \]
(see Chapter XI in \cite{revuz2013continuous}). Then $(\rho_t)_{t \geq 0}$ is itself the solution to some SDE with a singular drift. Namely, for $\delta>1$, $\rho$ is the solution to
\begin{equation}\label{sde1}
\rho_t=\rho_0+\frac{\delta-1}2\int_0^t \frac1{\rho_s}\d s+ B_t, \quad t\geq 0. \qquad (\delta>1)
\end{equation}
By contrast, for $\delta=1$, $\rho := \sqrt{X}$ is the solution to
\[
\rho_t=\rho_0+ L_t + B_t, \qquad \quad t\geq 0, \qquad (\delta=1)
\]
where $(L_t)_{t\geq 0}$ is continuous and monotone non-decreasing, with $L_0=0$ and
\begin{equation}\label{sde2}
\rho\geq 0, \qquad \int_0^\infty \rho_s \d L_s=0.
\end{equation}
In other words $\rho$ is a reflecting Brownian motion. However, for $\delta\in (0,1)$, the equation solved by $\rho:=\sqrt{X}$ is substantially more difficult. Indeed, in that case, $\int_0^t \frac{1}{\rho_s} \d s = \infty $ almost-surely, and the SDE for $\rho$ can be formally written using renormalisation
\[ \rho_t=\rho_0+\frac{\delta-1}2 \left(\int_0^t \frac1{\rho_s}\d s - \infty \right) + B_t. \]
This is reminiscent of renormalisations that arise in the context of singular stochastic PDEs and which have recently gained much attention with the development of the theories of regularity structures and paracontrolled distributions allowing to analyse such equations. However, the kind of renormalisation entering here into play is quite different from the schemes generally used in these theories, since it applies to local times of the solution rather than the solution itself. Namely, one can show - see e.g. \cite[Proposition 3.12]{zambotti2017random} - that 
$X$ admits {\it diffusion local times}, that is a continuous process $(\ell^a_t)_{t\geq 0,a\geq 0}$ such that, a.s.
\[
\int_0^t \varphi(\rho_s)\d s =\int_0^\infty \varphi(a) \, \ell^a_t \, a^{\delta-1} \d a,
\]
for all Borel $\varphi:\mathbb{R}_+\to\mathbb{R}_+$. Then the rigorous SDE for $\rho$ is given by
\begin{equation}\label{sde3}
\rho_t=\rho_0+\frac{\delta-1}2\int_0^\infty\frac{\ell^a_t-\ell^0_t}a \, a^{\delta-1}\d a+B_t, \quad t\geq 0, \qquad (0<\delta<1).
\end{equation}
Thus, even in this relatively simple SDE context, a highly non-trivial renormalisation phenomenon appears, which cannot yet be understood within the framework of the recent pathwise theories mentioned above. More generally, the dynamics of Bessel processes, and notably their reflection mechanism, possess a remarkable richness that makes them an object of particular interest. For instance, these properties play an important role in applications to the study of Schramm-Loewner Evolution, see \cite{katori2016bessel}. As mentioned above, to study these subtle properties, one in general resorts to the theory of stochastic calculus. Unfortunately, such tools typically break down in the context of stochastic PDEs (SPDEs) driven by space-time white noise. Recently, the theories of regularity structures and paracontrolled distributions have created novel tools to study such SPDEs, allowing - among other things - to obtain several results in the spirit of stochastic calculus, see e.g. \cite{Bellingeri}. However, at this point, such results lack some additional identification theorems to be as powerful as those of classical It\^{o}'s theory. This fact motivates the following question: is there, in the world of SPDEs, an analog of Bessel processes? What are their properties and how could one prove them? 

In the series of articles \cite{Z01,zambotti2002integration,zambotti2003integration,zambotti2004occupation} 
Zambotti constructed a family of SPDEs with properties very similar to Bessel processes, with an even richer behavior. More precisely, given $\delta>3$ and a boundary condition $a \geq 0$, the associated {\it Bessel SPDE} is given by
\begin{equation}\label{spde>3}
\left\{ \begin{array}{ll}
{\displaystyle
\frac{\partial u}{\partial t}=\frac 12
\frac{\partial^2 u}{\partial x^2}
 + \frac {\kappa(\delta)}{2 \, u^3} 
 + \xi 
}
\\ \\
u(t,0)=u(t,1)=a, \quad t\geq 0
\end{array} \right.
\qquad \qquad (\delta>3)
\end{equation}
where $u\geq 0$ is continuous and $\xi$ is a space-time white noise on $\mathbb{R}_+\times[0,1]$, and where we have set
\begin{equation}\label{kappadelta}
\kappa(\delta) := \frac{(\delta-3)(\delta-1)}{4}.
\end{equation}
As $\delta\downarrow 3$, the solution to \eqref{spde>3} turns out to converge to the solution of the Nualart-Pardoux equation \cite{nualart1992white}
\begin{equation}\label{spde=3}
\left\{ \begin{array}{ll}
{\displaystyle
\frac{\partial u}{\partial t}=
\frac 12\frac{\partial^2 u}{\partial x^2}
 + \eta+ \xi }
\\ \\
u\geq 0, \ d\eta\geq 0, \
\int_{\mathbb{R}_+\times[0,1]} u\, d\eta=0,
\\ \\
u(t,0)=u(t,1)=a
\end{array} \right.
\qquad \qquad (\delta=3)
\end{equation}
where $\eta$ is a reflection measure on $]0,\infty[\,\times\,]0,1[$. Moreover, the unique invariant probability measure of \eqref{spde>3} for $\delta>3$ (resp. of \eqref{spde=3}) corresponds to the law of a $\delta$-dimensional (resp. $3$-dimensional) Bessel bridge from $a$ to $a$ on the interval $[0,1]$. 
In particular, the SPDE \eqref{spde=3} with $a=0$ admits the law of a normalised Brownian excursion as invariant measure. The above SPDEs arise naturally as scaling limits of discrete random interface models. Thus, equation \eqref{spde=3}  describes the fluctuations of an effective $(1+1)$ interface model \cite{funakiolla,funakistflour} and of weakly asymmetric interfaces \cite{etheridge2015scaling} near a wall, while \eqref{spde>3} describes the fluctuations of interface models with repulsion from a wall \cite{zambotti2004fluctuations}. 
While the SPDEs \eqref{spde>3} for $\delta>3$ are the analog of the SDEs \eqref{sde1}, the SPDE \eqref{spde=3} is the analog of the SDE \eqref{sde2}: see the introduction of \cite{EladAltman2019} for a development of this idea.

One may ask whether the previous SPDEs can be extended in a natural way to the region $\delta<3$. Namely, can one construct SPDEs which possess the laws of Bessel bridges of dimension $\delta<3$ as invariant measure? This question is further motivated by a major open problem: the description of the scaling limit of dynamical critical pinning models, which we conjecture to correspond to the SPDE associated with $\delta=1$. We refer to \cite{dgz} and \cite{deuschel2018scaling} for the study of pinning models, and to Section 15.2 of \cite{funakistflour}, to \cite{fattler2016construction}, \cite{grothaus18feller}, as well as Section 1.5 in \cite{deuschel2018scaling} for constructions of the corresponding dynamics in the discrete setting. The value $\delta=2$ is also of interest, since it corresponds to a transition in the behavior of Bessel processes at $0$, see e.g. Prop. 3.6 in  \cite{zambotti2017random}. We expect that a similar transition should happen at the level of the SPDEs for $\delta=2$, see Section \ref{sect_dynamics} below.
However, for several years, extending the above SPDEs to $\delta<3$, even at a heuristic level, had remained a very open problem. 

\subsection{Extension to $\delta < 3$ : the case of homogeneous Dirichlet boundary conditions} 

The recent article \cite{EladAltman2019} has identified the candidiates for the SPDEs that should correspond to $\delta<3$, in the case of homogeneous Dirichlet boundary conditions. The method used there relies on integration by parts formulae (IbPFs) for the law of Bessel bridges of dimension $\delta<3$ from $0$ to $0$ over the interval $[0,1]$. These formulae imply that, in the case of homogeneous Dirichlet boundary conditions, the Bessel SPDE for $1<\delta<3$ should have the following form
\begin{equation}\label{1<spde<3}
\frac{\partial u}{\partial t}=\frac 12
\frac{\partial^2 u}{\partial x^2}
 + \frac {\kappa(\delta)}{2}\frac\partial{\partial t}\int_0^\infty \frac1{b^3}\left(\ell^b_{t,x}-\ell^0_{t,x}\right)  b^{\delta-1}\d b  + \xi,
\quad \quad (1<\delta<3)
\end{equation}
with the boundary condition $u(t,0)=u(t,1)=0$ and where, for all $x \in (0,1)$, the local time process $(\ell^b_{t,x})_{b\geq 0, t \geq 0}$ is defined by
\begin{equation}
\label{occupation_time_formula}
\int_0^t \varphi(u(s,x))\d s =\int_0^\infty \varphi(b) \, \ell^b_{t,x} \, b^{\delta-1} \d b,
\end{equation}
for all Borel $\varphi:\mathbb{R}_+\to\mathbb{R}_+$. 
Then \eqref{1<spde<3} appears as an analog, in the context of SPDEs, of the SDE \eqref{sde3}. For $\delta=1$, the SPDE should be
\begin{equation}\label{spde=1}
\frac{\partial u}{\partial t}=\frac 12
\frac{\partial^2 u}{\partial x^2}
 - \frac{1}{8} \frac\partial{\partial t}\frac{\partial^{2}}{\partial b^{2}} \, \ell^{b}_{t,x}\, \biggr\rvert_{b=0} + \xi
\quad, \quad (\delta=1)
\end{equation}
while for $0<\delta<1$, it should take the form
\begin{equation}\label{0<spde<1}
\begin{split}
& \frac{\partial u}{\partial t}=\ \frac 12
\frac{\partial^2 u}{\partial x^2} + \xi
\qquad\qquad\qquad\qquad\qquad\qquad\qquad (0<\delta<1)
\\ & + \frac {\kappa(\delta)}{2}\frac\partial{\partial t}\int_0^\infty \frac1{b^3}\left(\ell^b_{t,x}-\ell^0_{t,x}-\frac{b^2}2\frac{\partial^{2}}{\partial b^{2}} \, \ell^{b}_{t,x}\, \biggr\rvert_{b=0}\right)  b^{\delta-1}\d b.
\end{split}
\end{equation}
imposing again $u(t,0)=u(t,1)=0$ in both equations. In \cite{EladAltman2019}, the name Bessel SPDEs  was proposed to refer to all these equations. We stress that, in contrast to the corresponding IbPFs, the Bessel SPDEs of parameter $\delta<3$ are, for now, mostly conjectural. However, at a heuristic level, one recognizes in all these equations a common structure. 
Indeed, one can reformulate all the Bessel SPDEs in a unified way. To do so, we first recall the definition of a family of distributions used in \cite{EladAltman2019}: for all $\alpha\in\R$, we define the Schwartz distribution $\mu_{\alpha}$ on $[0,\infty)$ as follows
\begin{itemize}
\item if $ \alpha = -k$ with $k \in \mathbb{N}$, then 
\begin{equation}
\label{integer_alpha}
\langle \mu_{\alpha}, \varphi \rangle := (-1)^{k} \varphi^{(k)}(0), \qquad \forall \,
\varphi \in C^\infty_0([0,\infty))
\end{equation}
\item otherwise, 
\begin{equation}
\label{non_integer_alpha}
\langle \mu_{\alpha} , \varphi \rangle := \int_{0}^{+ \infty} \left(
\varphi(b) - \sum_{0\leq j\leq 
-\alpha} \frac{b^{j}}{j!} \, \varphi^{(j)}(0) \right) \frac{b^{\alpha -1}}{\Gamma(\alpha)} \d b, \quad \forall \,
\varphi \in C^\infty_0([0,\infty)).
\end{equation}
\end{itemize}
Then the function $\alpha\mapsto\langle \mu_{\alpha} , \varphi \rangle$ is analytic for all $\varphi \in C^\infty_0([0,\infty))$ (see Section 3.5 in \cite{MR3469458}). Moreover, as noted in the introduction of \cite{EladAltman2019}, for all $\delta>0$ the non-linearity in \eqref{spde>3}- \eqref{spde=3}-\eqref{1<spde<3}-\eqref{spde=1}-\eqref{0<spde<1} can be expressed as
\[\frac{\Gamma (\delta)}{8(\delta-2)} \langle \mu_{\delta-3},\ell^{\boldsymbol{\cdot}}_{t,x}\rangle, \]
noting that the singularity at $\delta=2$ is only apparent due to the conjectured vanishing property
\[ \langle \mu_{-1},\ell^{\boldsymbol{\cdot}}_{t,x}\rangle =0.\]
Thus, formally, the drift term of equations \eqref{1<spde<3}, \eqref{spde=1} and \eqref{0<spde<1} corresponds to the unique analytic continuation, to the region $\delta <3$, of the drift term of the SPDEs \eqref{spde>3}.  While the well-posedness of these equations remains conjectural (see in particular the discussion in Section 6 of \cite{EladAltman2019}), for the case $\delta=1$, \cite{EladAltman2019} proved the existence of a weak solution using Dirichlet Forms methods. Namely, with Dirichlet Forms one can construct a Markov process $(u_t)_{t \geq 0}$ with the law of the modulus of a Brownian bridge as reversible measure (a construction already done in Section 5 of \cite{vosshallthesis}), and it was proved in \cite{EladAltman2019} that this process, at equilibrium, satisfies a weak version of equation \eqref{spde=1} above. More precisely, it was shown that
\begin{equation}\label{formal1}
\frac{\partial u}{\partial t}=\frac 12
\frac{\partial^2 u}{\partial x^2}
 - \frac{1}{4} \, \underset{\epsilon \to 0}{\lim} \, \rho''_{\epsilon}(u) + \xi, 
 \qquad (\delta=1)
\end{equation}
where  $\rho_{\epsilon} = \frac{1}{\epsilon} \rho(\frac{x}{\epsilon})$ is a smooth approximation of the Dirac measure at $0$, see Theorem 5.9 in that article for the precise statement. 

\subsection{Our results}

In this article, we extend the results of \cite{EladAltman2019} in two directions. As a first enhancement, we extend the integration by parts fomulae obtained in \cite{EladAltman2019} to Bessel processes on $[0,1]$ with arbitrary initial condition, as well as Bessel bridges with arbitrary boundary values.
These formulae extend naturally the results of \cite{EladAltman2019} which were restricted to $a=a'=0$, i.e. to homogeneous Dirichlet boundary conditions at the level of the SPDEs. Therefore, we conjecture that the natural extension of the SPDEs \eqref{spde>3} and \eqref{spde=3} to the region $\delta<3$ is given by the SPDEs \eqref{1<spde<3}, \eqref{spde=1} and \eqref{0<spde<1} containing renormalised local times, but with general Dirichlet boundary conditions $a \geq 0$ instead of homogeneous ones. In other words, the structure of the Bessel SPDEs unveiled in \cite{EladAltman2019} is preserved in the case of general Dirichlet boundary conditions: only the boundary conditions have to be adjusted accordingly. This also bears out the idea that the appearance of renormalised local times in the drift term of the SPDEs observed in \cite{EladAltman2019} is an inherent feature of these equations rather than an artefact 
specific to one particular boundary condition. 
A second enhancement of the results of \cite{EladAltman2019} provided here is a dynamic one. Namely, exploiting the IbPF for $\delta=2$, we provide a construction of the dynamics corresponding to the law of a $2$-dimensional Bessel bridge from $0$ to $0$ on the interval $[0,1]$ using Dirichlet form techniques. This generalises, to the case $\delta=2$, 
the result of Section 5 of \cite{EladAltman2019} for the case $\delta=1$.   

\subsubsection{Integration by parts formulae for the laws of Bessel bridges}

As said above, our main tools to investigate the SPDEs presented above are integration by parts formulae (IbPFs). 
%
%
Already in the articles \cite{zambotti2002integration} and \cite{zambotti2003integration} dedicated to the study of the SPDEs \eqref{spde=3} and \eqref{spde>3}, Zambotti had derived IbPFs for the corresponding invariant probability measures, the laws of Bessel bridges of dimension $\delta \geq 3$ on $[0,1]$. In that case, the SPDEs could be solved using the technique introduced by Nualart and Pardoux in \cite{nualart1992white} as well as monotonicity arguments based on the dissipativity of the drift. The IbPFs were needed to derive fine properties of the solution as was done for instance in \cite{zambotti2004occupation}, or for the study of the reflection measure $\eta$ appearing in \eqref{spde=3}.

On the other hand, in the regime $\delta <3$, the classical tools based on monotonicity of the drift break down, and, prior to \cite{EladAltman2019}, it was not even clear what a good candidate for the drift in the SPDE should be. For the moment, the only approach we have at our disposal  to tackle these SPDEs consists in deriving IbPFs for the corresponding invariant measures, that is the laws of Bessel bridges. However, extending the IbPFs obtained in \cite{zambotti2002integration} and \cite{zambotti2003integration} to that regime is highly non-trivial. 
Indeed, while the laws of Bessel bridges of dimension $\delta \geq 3$ can be represented as Gibbs measures with an explicit, convex potential with respect to the law of a Brownian bridge (cf. Prop 3.23 in \cite{zambotti2017random}), this is no longer the case when $\delta<3$. This is why IbPFs in the regime $\delta <3$ had remained out of reach for several years. An exception was the case $\delta=1$, corresponding to the law of the reflected Brownian bridge on $[0,1]$, and for which IbPFs had been obtained using Gaussian calculus: see \cite{zambotti2005integration} and \cite{grothaus2016integration}.  

The problem of extending the IbPFs to $\delta<3$ was solved in \cite{EladAltman2019} which derived IbPFs for the laws of $\delta$-dimensional Bessel bridges from $0$ to $0$ over the interval $[0,1]$ for any $\delta<3$, thus opening the way to the SPDEs \eqref{1<spde<3} \eqref{spde=1} and \eqref{0<spde<1} mentioned above. The computations leading to these formulae rely on semi-explicit formulae for Laplace transforms of squared Bessel bridges from $0$ to $0$ over $[0,1]$, which are consequences of the additivity property of squared Bessel processes first observed by Shiga and Watanabe in \cite{shiga1973bessel}. More precisely, let us henceforth denote by $C([0,1]) := C([0,1], \mathbb{R})$ the space of continuous real-valued function on $[0,1]$. In \cite{EladAltman2019}, an important role was played by the vector space $S$ generated by all functionals on $C([0,1])$ of the form
\begin{align}
\label{exp_functional}
\begin{cases}
C([0,1]) \to \mathbb{R} \\
X \mapsto \exp \left( - \langle m, X^{2} \rangle \right),
\end{cases}
\end{align}
where $m$ is a finite Borel measure on $[0,1]$ and $\langle m, X^{2} \rangle := \int_0^1 X_t^{2} \, m({\rm{d}} t) $. 
In fact, as a consequence of the additivity property of squared Bessel processes, such functionals act on the laws of Bessel processes as a Girsanov transformation corresponding to a deterministic time-change (see Lemma 3.3 in \cite{EladAltman2019}), thus allowing to perform semi-explicit computations, along the lines of Chapter XI of \cite{revuz2013continuous}. In particular, one has the following remarkable formula for the Laplace transform of the square of a $\delta$-dimensional Bessel bridge conditioned to hit $a$ at a time $r \in (0,1)$ 
\begin{equation}
\label{nice_expression}
\begin{split}
& E^{\delta} [\exp (- \langle m, X^{2} \rangle) \, | \, X_{r} = a] \\
& = \exp \left(-\frac{a^{2}}{2} \left( \frac{\psi_1}{\psi_r \hat{\psi}_r} - \frac{1}{r(1-r)} \right)\right)  \left(\frac{r(1-r)}{\psi_r \hat{\psi}_r}\right)^{\delta/2},
\end{split}
\end{equation}
see (3.18) in \cite{EladAltman2019}. Note the multiplicative structure of the above quantity, with the separation of the dependance on $a$ on the one side, and the dependance on $\delta$ on the other. We stress that such formulae are classical and were exploited in several contexts, see e.g. Theorem (3.2) in Chapter XI of \cite{revuz2013continuous}.  The main novelty in \cite{EladAltman2019} was to exploit these convenient identities to derive IbPFs for the functionals in $S$ with respect to the laws $P^{\delta}$ of $\delta$-dimensional Bessel bridges from $0$ to $0$ over $[0,1]$, for any $\delta > 0$, see Theorem 4.1 in \cite{EladAltman2019}. We emphasise that, while \eqref{nice_expression} yields the Laplace transform of a conditioned \textit{squared} Bessel bridge, to our knowledge, there is no general formula for the Laplace transform of Bessel bridges: if one replaces in the left-hand side of \eqref{nice_expression} the $X^2$ by a $X$, one cannot hope to still have an explicit expression in the right-hand side, hence the importance of considering the space $S$ rather than more classical spaces of functionals. Thus, to some extent, functionals of the type \eqref{exp_functional} play, in this context, the same role as functionals of the form $ \exp \left( \langle k, X \rangle \right)$, $k \in C([0,1])$, in the papers \cite{zambotti2005integration} and \cite{grothaus2016integration}, where $\langle \cdot , \cdot \rangle$ denotes the $L^2$ inner product on $[0,1]$.



In this article, we extend the IbPFs to Bessel bridges with arbitrary boundary values. For all $\delta >0$ and $a,a' \geq 0$, let $P^{\delta}_{a,a'}$ be the law, on $C([0,1])$, of a $\delta$-dimensional Bessel bridge between $a$ and $a'$. For all $b \geq 0$ and $r\in(0,1)$, let moreover $\Sigma^{\delta,r}_{a,a'}({\rm d}X \,|\, b)$ denote the finite measure on $C([0,1])$ given by
\begin{equation*}
\Sigma^{\delta,r}_{a,a'}({\rm d}X \,|\, b) := \frac{p^{\delta,r}_{a,a'}(b)}{b^{\delta-1}} \,
 P^{\delta}_{a,a'} [ \d X | \, X_{r} = b].
\end{equation*}
In the above, for all $r \in (0,1)$, $p^{\delta,r}_{a,a'}$ denotes the density of the law of $X_{r}$ under $P^{\delta}_{a,a'}$. 
%
The measure $\Sigma^{\delta,r}_{a,a'}(\,\cdot \,|\, b)$ is meant to be the Revuz measure of the diffusion local time of $(u(t,r))_{t\geq 0}$ at level $b$, with $u(t, \cdot)_{t \geq 0}$ a conjectural infinite-dimensional diffusion with invariant measure $P^{\delta}_{a,a'}$. Note that, for all $r \in (0,1)$ and $b \geq 0$, $\Sigma^{\delta,r}_{0,0}({\rm d}X \,|\, b)$ coincides with the measure $\Sigma^{\delta}_{r}({\rm d}X \,|\, b)$ of Def 3.4 in \cite{EladAltman2019}. As in \cite{EladAltman2019}, we use a convenient notation: for any sufficiently differentiable function $f: \mathbb{R}_{+} \to \mathbb{R}$, for all $n \in \mathbb{Z}$, and all $b \geq 0$, we set
\[ \mathcal{T}^{\,n}_{b} f := f(b) - \sum_{0\leq j\leq n} \frac{b^{j}}{j!} \, f^{(j)}(0). \]
In words, for all $b \geq 0$, if $n\geq 0$ then $\mathcal{T}^{\,n}_{b} f$ is the Taylor remainder centered at $0$, of order $n+1$, of the function $f$, evaluated at $b$; if $n<0$ then $\mathcal{T}^{\,n}_{b} f$ is simply the value of $f$ at $b$. Finally, defining for all $\delta > 0$
\[ \kappa(\delta) := \frac{(\delta-1)(\delta-3)}{4}, \]
and setting 
\[ k:= \left\lfloor \frac{3-\delta}{2} \right\rfloor\leq 1, \]
the IbPFs we obtain in this article can be written as follows. Let $E^{\delta}_{a,a'}$ denote the expectation operator corresponding to the probability measure  $P^{\delta}_{a,a'}$ on $C([0,1])$. Then, for all $\delta \in (0,3)\setminus\{1\}$, $\Phi \in S$ and $h \in C^{2}_{c}(0,1)$, it holds 
  
%

\begin{equation}\label{onetothree}
\begin{split}
& E^{\delta}_{a,a'} (\partial_{h} \Phi (X) ) + E^{\delta}_{a,a'} (\langle h '' , X \rangle \, \Phi(X) )= \\ 
& -\kappa(\delta)\int_{0}^{1}  
 h_{r}  \int_0^\infty b^{\delta-4} \Big[ \mathcal{T}^{\,2k}_{b} \, \Sigma^{\delta,r}_{a,a'}(\Phi (X) \,|\, \cdot\,) \Big]
  \d b \d r,
\end{split}
\end{equation}
where $\langle \cdot , \cdot \rangle$ denotes the $L^2$ inner product on $[0,1]$, see Theorem \ref{thm_ibpf_positive_cond} below. Here we used the abusive but convenient notation
\[ \Sigma^{\delta,r}_{a,a'}(\Phi(X) \,|\, b) := \int \Phi(X) \ \Sigma^{\delta,r}_{a,a'}({\rm d}X \,|\, b), \qquad b \geq 0.
\]
We stress that, for $\Phi$ as above, by Lemma \ref{lap_cond_bridge} below, the term
\[ \mathcal{T}^{\,2k}_{b} \, \Sigma^{\delta,r}_{a,a'}(\Phi (X) \,|\, \cdot\,) \]
appearing in the formulae is actually the Taylor remainder, centered at $0$, of a smooth function of $b^{2}$. In particular, it is of order $b^{2(k+1)}$ as $b \to 0$, which ensures the integral to be convergent. We also obtain the following formula for the critical case $\delta=1$  


\[
\begin{split}
& E^{1}_{a,a'} (\partial_{h} \Phi (X) ) + E^{1}_{a,a'} (\langle h '' , X \rangle \, \Phi(X) ) = \frac{1}{4} \int_{0}^{1} \d r \, h_r\, \frac{{\rm d}^{2}}{{\rm d} b^{2}} \, \Sigma^{1,r}_{a,a'} (\Phi(X) \, | \, b)  \biggr\rvert_{b=0}  .
\end{split}
\]
%
Note in particular that, in the case $a=a'=0$, we immediately recover the formulae of Theorem 4.1 of \cite{EladAltman2019}. 

The proof of the IbPFs for $P^{\delta}_{a,a'}$ ($a,a' \geq 0$) is a little more involved, although close in spirit to the particular case $a=a'=0$. Indeed, in the case $a=a'=0$, one could rely on the fact that quantities of the form $\Sigma^{\delta,r}_{0,0}(\Phi \,|\, b)$, for $\Phi$ of the form \eqref{exp_functional}, have a very simple expression: these are, up to some constants, just exponential functions in $b^{2}$, see \eqref{nice_expression} above and Lemma 3.6 of \cite{EladAltman2019}. Thus, in that case, the IbPFs all reduced (after some transformations) to an elementary identity on the $\Gamma$ function. Instead, in the case of general $a,a' \geq 0$, quantities of the form $\Sigma^{\delta,r}_{a,a'}(\Phi \,|\, b)$ for $\Phi \in S$  are more complicated functions in $b^2$, see Lemma \ref{lap_cond_bridge} below. Rather than merely relying on explicit computations - which, in the present case, would quickly become intractable - we are thus led to better understanding the structure of the right-hand side of the IbPFs. It turns out that, for any value of $\delta >0$, these can all be expressed using the family $(\mu_{\alpha})_{\alpha \in \mathbb{R}}$ of Schwartz distributions on $\mathbb{R}_{+}$  defined by \eqref{integer_alpha} and \eqref{non_integer_alpha} above, disregarding whether $\delta$ is larger or smaller than $3$. The only difference is the following: for $\alpha \geq 0$, corresponding to the case $\delta \geq 3$, $\mu_{\alpha}$ is a positive measure, while for $\alpha < 0$, corresponding to $\delta <3$, $\mu_{\alpha}$ is a genuine distribution. 
Although this difference is the source of a tremendous challenge in the study of the Bessel SPDEs associated with $\delta<3$, it does not really matter at the level of the IbPFs, which can all be derived by exploiting Lemma \ref{lap_cond_bridge_uncond} below, as well as elementary properties satisfied by $(\mu_\alpha)_{\alpha \in \mathbb{R}}$ (see Section \ref{sect_prelude} below) and the equation satisfied by the densities of the $\delta$-dimensional squared Bessel bridge,
see \eqref{pde_densities_only_adjoint} below.

\subsubsection{A dynamical result: the case $\delta=2$}

Our IbPFs allow us to construct a weak version of the dynamics associated with Bessel bridges of dimension $\delta = 2$ from $0$ to $0$ on $[0,1]$. We stress that the case $\delta=1$ has already been treated in \cite{EladAltman2019}. Here, using Dirichlet Form techniques, we  construct a Markov process $(u_t)_{t \geq 0}$ with the law of a $2$-dimensional Bessel bridge as reversible measure, and satisfying the SPDE

\begin{equation}\label{formal2}
\frac{\partial u}{\partial t}=\frac 12
\frac{\partial^2 u}{\partial x^2}
 - \frac{1}{8} \, \underset{\epsilon \to 0}{\lim} \,  \underset{\eta \to 0}{\lim} \, \left( \frac{\mathbf{1}_{\{u \geq \epsilon\}}}{u^{3}} - \frac{2}{\epsilon} \frac{\rho_{\eta}(u)}{u} \right) + \xi, \qquad (\delta=2),
 \end{equation}
where  $\rho_{\eta} = \frac{1}{\eta} \rho(\frac{x}{\eta})$ is a smooth approximation of the Dirac measure at $0$, see Theorem \ref{weak_spde_2}, for the precise statements. Heuristically, if one assumes that $u$ admits a family of local times $\ell^b_{t,x}, \, x \in (0,1), \, b,t \geq 0$ satisfying \eqref{occupation_time_formula} with $\delta=2$, then \eqref{formal2} is equivalent to \eqref{1<spde<3} for $\delta=2$. Thus, \eqref{formal2} is a weaker version of \eqref{1<spde<3} for $\delta=2$, in the sense that it does not a priori require the existence of local times. The techniques used in this article to obtain \eqref{formal2} are the same as those used in \citep{EladAltman2019} to obtain \eqref{formal1} for the case $\delta=1$, but the computations are slightly more involved. As in \cite{EladAltman2019}, the reason for focusing on integer values of $\delta$ (which, in the case $\delta<3$, only leaves $\delta=1,2$) consists in the existence of a  handy representation of the law of an integer-valued Bessel bridge in terms of the Euclidean norm of a Brownian bridge: since the gradient dynamics associated with a Brownian bridge are well-known and described by a linear Ornstein-Uhlenbeck process, in the integer-dimensional case, many problems thus boild down to relatively simple Gaussian computations. We stress that, prior to \citep{EladAltman2019}, this fact had already been exploited in several works to study the case $\delta=1$: in \cite{zambotti2005integration} and \cite{grothaus2016integration} for the derivation of IbPFs and in Section 5 of \cite{vosshallthesis} for the construction of the Markov process. Note that, even in the case of integer dimensions, such a Gaussian representation holds only in the case where one of the boundary values is $0$ (see \cite{yor2004remark}), so the method we use only applies to the cases $a=0$ or $a'=0$. In this article, for simplicity, the dynamics for $\delta=2$ will only be tackled in the case $a=a'=0$.

The article is organised as follows: 
in Section \ref{sect_sqred_bessel} we recall and prove some useful facts on the laws of squared Bessel processes, Bessel processes, and their bridges. In Section \ref{sect_ibpf}, we state and prove the IbPFs for the laws of Bessel bridges with arbitrary boundary values. 
Finally, the formulae for $\delta=2$ are used in Section \ref{sect_dynamics} to construct a weak form of the  corresponding SPDE, using Dirichlet Form techniques.

{\bf Acknowledgements.}
I am especially indebted to Lorenzo Zambotti for introducing me to this research topic as well as for countless precious discussions. The arguments used in Prop \ref{closability_2} below to show quasi-regularity of the forms associated with the law of a Bessel bridge of dimension $2$ were communicated to me by Rongchan Zhu and Xiangchan Zhu, whom I warmly thank. 

\section{Squared Bessel processes, Bessel processes, and associated bridges}
\label{sect_sqred_bessel}

In this section we briefly recall the definitions of squared Bessel processes, Bessel processes, and their corresponding bridges, as well as some useful facts. 

\subsection{An important family of distributions}
\label{sect_prelude}

We start by recalling the definition of a family of distributions on $\mathbb{R}_{+}$ already used in \cite{EladAltman2019}, and which can be seen as a simplified version of the laws of Bessel processes or bridges. More than a toy model, these objects will be an essential tool in the proof of the IbPFs below. We consider the Borel measures on $\mathbb{R}_{+}$, defned, for $\alpha > 0$, by
\begin{equation}
\label{def_mu_pos} 
\mu_{\alpha}(dx) = \frac{x^{\alpha - 1}}{\Gamma(\alpha)} \d x. 
\end{equation}
Moreover, we define $\mu_{0}$ as 
\[\mu_{0} = \delta_{0}, \] 
the Dirac measure at $0$. 
On the other hand, for $\alpha <0$, $\mu_\alpha$ is defined as a Schwartz distribution. We firts recall the appropriate space of test functions.

\begin{df}
Let $S([0,\infty))$ be the space of $C^\infty$ functions $\varphi: [0,\infty) \to \mathbb{R}$ such that, for all $k, l \geq 0$, there exists $C_{k,\ell} \geq0$ such that
\begin{equation}
\label{inequalities_schwartz}
\forall x \geq 0, \quad | \varphi^{(k)} (x)  | \, x^{\ell} \leq C_{k,\ell}.
\end{equation}
\end{df}
For each $\alpha \geq 0$, the measure $\mu_{\alpha}$ defines a Schwartz distribution (which we still denote by $\mu_{\alpha}$) as follows: for all test function $\varphi \in S([0,\infty))$,
\begin{equation}
\label{mellin_transform}
\langle \mu_{\alpha} , \varphi \rangle := \int_{0}^{\infty} \varphi(x) \d \mu_{\alpha}(x).
\end{equation}
Note that, due to the exponential decay of $\varphi$ at $\infty$, the above integral is indeed convergent. 
%
For any smooth function $f: \mathbb{R}_{+} \to \mathbb{R}$, for all $n \in \mathbb{Z}$, and all $x \geq 0$, we set
\[ \mathcal{T}^{\,n}_{x} f := f(x) - \sum_{0\leq j\leq n} \frac{x^{j}}{j!} \, f^{(j)}(0). \]
If $n<0$ then $\mathcal{T}^{\,n}_{x} f$ is simply the value of $f$ at $x$. With these notations, we recall the following definition from \cite{EladAltman2019}:
\begin{df}
\label{def_mu}
For $\alpha < 0$, we define the distribution  $\mu_{\alpha}$ as follows: 
\begin{itemize}
\item if $ \alpha = -k$ with $k \in \mathbb{N}$, then
\begin{equation}
\label{mu_neg_int} 
\langle \mu_{\alpha}, \varphi \rangle := (-1)^{k} \varphi^{(k)}(0), \qquad \varphi \in \mathcal{S}([0,\infty)) 
\end{equation}
\item if $ - k - 1 < \alpha < -k$ with $k \in \mathbb{N}$, then
\begin{equation} 
\label{mu_neg_delta}
\langle \mu_{\alpha} , \varphi \rangle := \int_{0}^{+ \infty} \mathcal{T}^{\,k}_{x} \varphi \, \frac{x^{\alpha -1}}{\Gamma(\alpha)} \d x, \qquad \varphi \in \mathcal{S}([0,\infty)).
\end{equation}

\end{itemize}
\end{df}


We stress that the above definition is very classical: for all $\alpha \in \mathbb{R}$, $\mu_\alpha$ coindicides with the generalised functional $\frac{x_+^{\alpha-1}}{\Gamma(\alpha)}$ of Section 3.5 of \cite{MR3469458}. In particular, for any fixed $\varphi \in \mathcal{S}([0,\infty))$, the function $\alpha \to \langle \mu_{\alpha}, \varphi \rangle$ is analytic on $\mathbb{R}$. We recall the following elementary formula (see (5) in Section 3.5 of \cite{MR3469458}), which states that $\mu_{\alpha-1}$ is the distributional derivative of $\mu_{\alpha}$. It can also be seen as a simplified version of the IbPFs in Section \ref{sect_ibpf} below.  

\begin{prop}
\label{thm_ibpf_mu}
The following formulae hold:
\[ \langle \mu_{\alpha}, \varphi' \rangle = - \langle \mu_{\alpha-1}, \varphi \rangle \]
for all $\alpha \in \mathbb{R}$ and $\varphi \in \mathcal{S}([0,\infty))$.
\end{prop}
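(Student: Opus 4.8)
The plan is to reduce the identity, which is claimed for all $\alpha \in \mathbb{R}$, to the single easy regime $\alpha > 1$ by invoking analyticity, and to settle it there by a one-line integration by parts. First I would observe that if $\varphi \in \mathcal{S}([0,\infty))$ then $\varphi' \in \mathcal{S}([0,\infty))$ as well, since $(\varphi')^{(k)} = \varphi^{(k+1)}$ and so the bounds \eqref{inequalities_schwartz} for $\varphi'$ follow from those for $\varphi$. Consequently both pairings in the statement are well defined for every $\alpha$, and, by the analyticity recalled after Definition \ref{def_mu}, the two functions $\alpha \mapsto \langle \mu_{\alpha}, \varphi' \rangle$ and $\alpha \mapsto \langle \mu_{\alpha-1}, \varphi \rangle$ are real-analytic on all of $\mathbb{R}$ (the second being the composition of an analytic function with the affine shift $\alpha \mapsto \alpha - 1$).

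By the identity theorem for real-analytic functions, it therefore suffices to prove the equality on the open half-line $\{\alpha > 1\}$. There both $\mu_{\alpha}$ and $\mu_{\alpha-1}$ are genuine measures, given by the densities \eqref{def_mu_pos}, and I can compute directly:
\begin{equation*}
\langle \mu_{\alpha}, \varphi' \rangle = \int_0^\infty \varphi'(x) \, \frac{x^{\alpha-1}}{\Gamma(\alpha)} \d x = \left[ \varphi(x) \, \frac{x^{\alpha-1}}{\Gamma(\alpha)} \right]_0^\infty - \int_0^\infty \varphi(x) \, \frac{(\alpha-1)\, x^{\alpha-2}}{\Gamma(\alpha)} \d x.
\end{equation*}
The boundary term vanishes: at $+\infty$ because $\varphi$ and all its derivatives decay faster than any polynomial, and at $0$ because $\alpha - 1 > 0$. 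Using the functional equation $\Gamma(\alpha) = (\alpha-1)\,\Gamma(\alpha-1)$, the surviving integral equals $-\langle \mu_{\alpha-1}, \varphi \rangle$, which establishes the claim for $\alpha > 1$ and hence, by analytic continuation, for all $\alpha \in \mathbb{R}$.

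I expect the computation itself to be entirely routine; the only points requiring care are the justifications around it. The load-bearing step is the reduction by analyticity: one must check that $\varphi'$ is an admissible test function (so that the left-hand side is covered by the same analyticity statement applied to $\varphi'$) and that the base interval is taken to be $\{\alpha > 1\}$ rather than merely $\{\alpha > 0\}$, so that the lower boundary contribution $x^{\alpha-1}$ at $x=0$ actually vanishes. Everything else, namely the decay at infinity and the $\Gamma$-recursion, is immediate. It is worth emphasising that this analytic-continuation strategy cleanly sidesteps the case analysis a direct verification would demand, in which one would have to match $\mathcal{T}^{\,k}_{x} \varphi'$ against $\mathcal{T}^{\,k+1}_{x} \varphi$ across each non-integer interval and separately treat the Dirac-type evaluations \eqref{mu_neg_int} at the negative integers as well as the transitional values $\alpha \in \{0,1\}$.
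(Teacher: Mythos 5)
Your proof is correct. The paper does not actually prove Proposition \ref{thm_ibpf_mu} --- it recalls it as a classical fact, citing formula (5) in Section 3.5 of \cite{MR3469458} --- but your argument (verify the identity by a direct integration by parts in the regime where both $\mu_\alpha$ and $\mu_{\alpha-1}$ are genuine measures, then extend to all $\alpha\in\mathbb{R}$ by analyticity of $\alpha\mapsto\langle\mu_\alpha,\varphi\rangle$) is exactly the scheme the paper itself employs for the companion Lemma \ref{lemma_mu}, including the two justifications that genuinely need care: that $\varphi'\in\mathcal{S}([0,\infty))$ so the analyticity statement applies to the left-hand side, and that the base region is chosen ($\alpha>1$) so the boundary term at $x=0$ vanishes.
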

As shown by Prop \ref{thm_ibpf_mu}, the family of distributions $\left( \mu_{\alpha} \right)_{\alpha \in \mathbb{R}}$ behaves nicely under differentiation. Actually it also behaves nicely under multiplication by $x$, as shown by the following result:

\begin{lm}
\label{lemma_mu}
For all $\alpha \in \mathbb{R}$ and $\varphi \in \mathcal{S}([0,\infty))$, the following relation holds:
\begin{equation}
\label{relation_multiplication}
\langle \mu_{\alpha} (x) , x \, \varphi(x) \rangle = \alpha \, \langle \mu_{\alpha+1}, \varphi \rangle.
\end{equation}
\end{lm}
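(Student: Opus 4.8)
The plan is to exploit the analyticity of $\alpha \mapsto \langle \mu_\alpha, \psi \rangle$ (for each fixed test function $\psi \in \mathcal{S}([0,\infty))$) recalled just above, which reduces the identity \eqref{relation_multiplication} to the elementary case $\alpha > 0$ where both objects are honest measures. The point is that both sides of \eqref{relation_multiplication}, viewed as functions of $\alpha$ for fixed $\varphi$, are real-analytic on all of $\mathbb{R}$; it then suffices to check their equality on an interval.

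First I would verify that the left-hand side is analytic in $\alpha$. This requires noting that $x \mapsto x\varphi(x)$ again belongs to $\mathcal{S}([0,\infty))$: indeed, by the Leibniz rule $(x\varphi)^{(k)}(x) = x\,\varphi^{(k)}(x) + k\,\varphi^{(k-1)}(x)$, so the bounds \eqref{inequalities_schwartz} for $x\varphi$ follow at once from those for $\varphi$. Hence $\alpha \mapsto \langle \mu_\alpha, x\varphi \rangle$ is analytic, by the cited fact applied to the fixed admissible test function $x\varphi$. The right-hand side $\alpha \mapsto \alpha\,\langle \mu_{\alpha+1}, \varphi \rangle$ is analytic as the product of the identity map with the translate $\alpha \mapsto \langle \mu_{\alpha+1}, \varphi \rangle$ of an analytic function.

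Next I would establish \eqref{relation_multiplication} directly for $\alpha > 0$, where $\mu_\alpha(\mathrm{d}x) = x^{\alpha-1}/\Gamma(\alpha)\,\mathrm{d}x$. Using the functional equation $\Gamma(\alpha+1) = \alpha\,\Gamma(\alpha)$, one computes
\[ \langle \mu_\alpha(x), x\,\varphi(x) \rangle = \int_0^\infty \varphi(x)\,\frac{x^{\alpha}}{\Gamma(\alpha)}\,\mathrm{d}x = \alpha \int_0^\infty \varphi(x)\,\frac{x^{\alpha}}{\Gamma(\alpha+1)}\,\mathrm{d}x = \alpha\,\langle \mu_{\alpha+1}, \varphi \rangle. \]
Since two real-analytic functions on the connected set $\mathbb{R}$ that agree on the open half-line $(0,\infty)$ must coincide everywhere, this yields \eqref{relation_multiplication} for all $\alpha \in \mathbb{R}$.

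I expect no serious obstacle: the only point needing a little care is confirming that $x\varphi$ remains an admissible test function, so that the analyticity statement may be invoked on the left. Should one prefer an argument avoiding analytic continuation, the same identity can be obtained by a direct case analysis following Definition \ref{def_mu}; its computational backbone is the elementary relation $\mathcal{T}^{\,k}_{x}(x\varphi) = x\,\mathcal{T}^{\,k-1}_{x}\varphi$, valid for all $k \geq 0$ (with the convention $\mathcal{T}^{-1}_{x}\varphi = \varphi(x)$), which follows from $\psi^{(j)}(0) = j\,\varphi^{(j-1)}(0)$ for $\psi = x\varphi$. This, together again with $\Gamma(\alpha+1) = \alpha\,\Gamma(\alpha)$ and, in the negative-integer case $\alpha = -k$, the Leibniz evaluation $\frac{\mathrm{d}^{k}}{\mathrm{d}x^{k}}(x\varphi)\big\rvert_{x=0} = k\,\varphi^{(k-1)}(0)$, dispatches each of the cases of the definition.
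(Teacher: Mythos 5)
Your proof is correct and takes essentially the same route as the paper's own argument: verify \eqref{relation_multiplication} directly for $\alpha>0$ from \eqref{def_mu_pos} and the functional equation $\Gamma(\alpha+1)=\alpha\,\Gamma(\alpha)$, then extend to all $\alpha\in\mathbb{R}$ by analyticity of both sides in $\alpha$. You in fact supply details the paper leaves implicit, notably the check that $x\,\varphi(x)$ remains in $\mathcal{S}([0,\infty))$ so that the analyticity statement applies to the left-hand side.
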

Here we wrote a dummy variable $x$ to indicate which variable is being integrated, a convention we will also use below.

\begin{proof}
Let $\varphi \in \mathcal{S}([0,\infty))$. If $\alpha >0$, then \eqref{relation_multiplication} follows from the definition \eqref{def_mu_pos}. But since both sides of \eqref{relation_multiplication} are analytic in $\alpha$, the equality extends to any $\alpha \in \mathbb{R}$, and the claim follows.
\end{proof}

\subsection{Squared Bessel processes and Bessel processes} 

Here and below, for all $I \subset \mathbb{R}_+$, we shall denote by $C(I)$ the space of continuous, real-valued functions on $I$. For all $x, \delta \geq 0$, let $Q^{\delta}_{x}$ be the law, on $C(\mathbb{R}_{+})$, of a $\delta$-dimensional squared Bessel process started at $x$, which is defined as the solution to the SDE
\begin{equation} \label{sdeq} X_{t} = x + 2 \int_{0}^{t} \sqrt{X_{s}} \d B_{s} + \delta t, \qquad t\geq 0,
\end{equation}
where $B$ is a standard Brownian motion, see Chapter XI of \cite{revuz2013continuous}. 
%
We recall that squared Bessel processes are homogeneous Markov processes on $\mathbb{R}_{+}$, and that the transition densities $\left( q^{\delta}_{t}(x,y) \right)_{t > 0, x,y \geq 0}$ are explicitly known (see section XI of \cite{revuz2013continuous}). When $\delta >0$, these are given by
\begin{equation}
\label{density_besq_x_pos}
q^{\delta}_{t}(x,y) = \frac{1}{2t} \left( \frac{y}{x} \right)^{\nu/2} \exp\left( - \frac{x+y}{2t} \right) I_{\nu} \left(\frac{\sqrt{xy}}{t} \right),\quad t >0, \ x>0
\end{equation}
and
\begin{equation}
\label{density_besq_x_zero}
q^{\delta}_{t}(0,y) = (2t)^{-\frac\delta2} \, \Gamma \left( \delta/2 \right)^{-1} y^{\delta/2-1} \exp\left( - \frac{y}{2t} \right),\quad t >0.
\end{equation}
%
Above, $\nu := \delta/2 -1$ and $I_{\nu}$ is the modified Bessel function of index $\nu$:
\[ I_\nu(z) := \sum_{k=0}^\infty \frac{\left(z/2\right)^{2k + \nu}}{k! \, \Gamma(k + \nu +1)}, \qquad z > 0. \]
If $(X_{t})_{t \geq 0}$ is a $\delta$-dimensional squared Bessel process started at $x$, then the process $(\sqrt{X_{t}})_{\, t \geq 0}$ is, by definition, a $\delta$-dimensional Bessel process started at $a$, where $a=\sqrt{x}$. We shall denote by $\left( p^{\delta}_{t}(a,b) \right)_{t >0, \, a,b \geq 0}$ the corresponding transition densities. These are given in terms of the densities of the squared Bessel process by the relation

\begin{equation}
\label{relation_denisties_bes_besq}
 \forall t > 0, \forall a, b \geq 0, \quad p^{\delta}_{t}(a,b) = 2 \, b \, q^{\delta}_{t}(a^{2},b^{2}). 
\end{equation}
Note that the measure $\mu_{\delta}$ defined above is reversible for the $\delta$-dimensional Bessel process. Indeed, the following detailed balance condition holds:
\[ \forall t > 0, \forall a, b \geq 0, \quad a^{\delta-1} p^{\delta}_{t}(a,b) = b^{\delta-1} p^{\delta}_{t}(b,a). \]

\subsection{Squared Bessel bridges and Bessel bridges}


For all $\delta, x, y \geq 0$, we denote by $Q^{\delta}_{x,y}$ the law, on $C([0,1])$, of the $\delta$-dimensional squared Bessel bridge from $x$ to $y$ over the interval $[0,1]$, which is the law of a $\delta$-dimensional squared Bessel process started at $x$, and conditioned to hit $y$ at time $1$. A rigourous construction of these probability laws is provided in Chap. XI.3 of \cite{revuz2013continuous} (see also \cite{pitman1982decomposition} for a discussion on the particular case $\delta=y=0$). Note that, if $X \overset{(d)}{=} Q^{\delta}_{x,y}$, and $t \in (0,1)$, then the random variable $X_{t}$ admits the density $q^{\delta,t}_{x,y}$ on $\mathbb{R}_{+}$, where  
\begin{equation}
\label{one_pt_density_sqred_bridge} 
q^{\delta,t}_{x,y}(z) := \frac{q^{\delta}_{t} (x,z) q^{\delta}_{1 -t} (z, y)}{ q^{\delta}_{1}(x,y)},\quad z \geq 0 
\end{equation} 
see Chap. XI.3 of \cite{revuz2013continuous}. Note the following continuity property: for all $\delta >0$, the map $(x,y) \mapsto Q^{\delta}_{x,y}$ is continuous on $\mathbb{R}_{+}^{2}$ for the weak topology on probability measures (see Chap. XI.3 in \cite{revuz2013continuous}).
%
%

In the sequel, for any $\delta, a, a' \geq 0$, we shall denote by $P^{\delta}_{a,a'}$ the law, on $C([0,1])$, of the $\delta$-dimensional Bessel bridge from $a$ to $a'$ over the time interval $[0,1]$, that is the law of a $\delta$-dimensional Bessel process started at $a$ and conditioned to hit $a'$ at time $1$. We shall denote by $E^{\delta}_{a,a'}$ the associated expectation operator. Note that $P^{\delta}_{a,a'}$ is the image of $Q^{\delta}_{a^{2},a'^{2}}$ under the map 
\begin{align}
\label{sqrt_map}
C([0,1])\ni \omega \mapsto \mathbf{1}_{\omega \geq 0} \, \sqrt{\omega} \in C([0,1])  .
\end{align}
In particular, if $X \overset{(d)}{=} P^{\delta}_{a,a'}$, and $r \in (0,1)$, then $X_{r}$ admits the density $p^{\delta,r}_{a,a'}$ on $\mathbb{R}_{+}$, where for all $a \geq 0$ and $a'>0$,
\begin{equation}\label{one_pt_density_bridge}
p^{\delta,r}_{a,a'}(b) = \frac{p^{\delta}_{r}(a,b)p^{\delta}_{1-r}(b,a')}{p^{\delta}_{1}(a,a')}, \quad b \geq 0, 
\end{equation} 
see \cite[Chapter XI.3]{revuz2013continuous}.
In the case $a'=0$, the corresponding density is given by
\[p^{\delta,r}_{a,0}(b) := \underset{a' \to 0}{\lim} \, p^{\delta,r}_{a,a'}(b), \quad b \geq 0,\] 
see Remark \ref{analytic_ext} below. In the particular case $a=a'=0$, consistently with the notations used in \cite{EladAltman2019}, we shall write $P^{\delta}$ instead of $P^\delta_{0,0}$, and $p^{\delta}_{r}(b)$ instead of $p^{\delta,r}_{0,0}(b)$, for short. Recall that the following formula then holds:
\[p^{\delta}_{r}(b) := \frac{b^{\delta-1}}{2^{\frac\delta2-1}\,\Gamma(\frac{\delta}{2})(r(1-r))^{\delta/2}}\, \exp \left(- \frac{b^{2}}{2r(1-r)} \right), \quad b \geq 0. \]
We finally introduce the last family of measures that we shall manipulate, which are further conditioned versions of the stochastic processes considered above. 

\subsection{Pinned bridges}

For all $\delta, x,y, z \geq 0$ and $r \in (0,1)$, we will denote by $Q^{\delta}_{x,y} [\ \cdot \ \, | \, X_{r} = z]$ the law, on $C([0,1])$, of a $\delta$-dimensional squared Bessel bridge between $x$ and $y$, pinned at $z$ at time $r$, that is the law of a $Q^{\delta}_{x,y}$ bridge conditioned to hit $z$ at time $r$. Such a probability law can be constructed using the same conditioning procedure as for the construction of squared Bessel bridges. Similarly one also considers, for all $\delta, a,b,c \geq 0$ and $r \in (0,1)$, the law  $P^{\delta}_{a,b} [\ \cdot \ \, | \, X_{r} = c]$ of a $\delta$-dimensional Bessel bridge between $a$ and $b$, pinned at $c$ at time $r$. Note that this probability measure is then the image of $Q^{\delta}_{a^{2},b^{2}} [ \ \cdot \ \, | \, X_{r} = c^{2}]$ under the map \eqref{sqrt_map}. With these notations at hand, we now define a family of measures generalising Definition 3.4 of \cite{EladAltman2019} to the setting of bridges with general boundary values. The idea motivating this definition is the same as in the case of vanishing boundary values: these measures should be the Revuz measures of the local time processes of the solution $(u(t,x))_{t\geq 0, \, x \in [0,1]}$ to an SPDE with reversible measure given by $P^{\delta}_{a,a'}$, for $a,a' \geq 0$.

\begin{df}
For all $a,a', b \geq 0$ and $r\in(0,1)$, we set
\begin{equation}\label{Sigma}
\Sigma^{\delta,r}_{a,a'}({\rm d}X \,|\, b) := \frac{p^{\delta,r}_{a,a'}(b)}{b^{\delta-1}} \,
 P^{\delta}_{a,a'} [ \d X | \, X_{r} = b],
\end{equation}
where $p^{\delta,r}_{a,a'}$ is the probability density function of $X_{r}$ under $P^{\delta}_{a,a'}$, see \eqref{one_pt_density_bridge}. 
\end{df}
As mentioned above, the measure $\Sigma^{\delta,r}_{a,a'}(\,\cdot \,|\, b)$ is meant to be the Revuz measure of the diffusion local time of $(u(t,r))_{t\geq 0}$ at level $b\geq 0$. Note in particular that, for $a=a'=0$, $\Sigma^{\delta,r}_{0,0}({\rm d}X \,|\, b)$ coincides with the measure $\Sigma^{\delta}_{r}({\rm d}X \,|\, b)$ introduced in Def 3.4 of \cite{EladAltman2019}. For the sake of concision, for all $r \in (0,1)$ and $a,a',b \geq 0$, and all Borel function $\Phi : C([0,1]) \to \mathbb{R}_+$, we  write with a slight abuse of notation
\[ \Sigma^{\delta,r}_{a,a'}(\Phi(X) \,|\, b) := \int \Phi(X) \ \Sigma^{\delta,r}_{a,a'}({\rm d}X \,|\, b), \qquad b \geq 0.
\]
\begin{rk}
\label{analytic_ext}
The equalities \eqref{Sigma} and\eqref{one_pt_density_bridge} above \textit{do} also include the cases $b=0$ and $a'=0$. Indeed, note that, as a consequence of the expressions \eqref{density_besq_x_pos}, \eqref{density_besq_x_zero} and \eqref{relation_denisties_bes_besq}, $\frac{p^{\delta}_{t}(a,b)}{b^{\delta-1}}$ can be extended to a smooth (actually analytic) function of $b$, at $b=0$. Similarly, for all $a, b \geq 0$, the function
\[a' \to \frac{p^{\delta}_{1-r}(b,a')}{p^{\delta}_{1}(a,a')}\]
can be extended in an analytic way at $a'=0$. In the sequel, we will systematically consider these analytic extensions.  
\end{rk}
Let $m$ be a finite Borel measure on $[0,1]$. In the sequel we will have to compute quantities of the form
\[
\Sigma^{\delta,r}_{a,a'}\left[\exp(- \langle m, \, X^2 \rangle) \,|\, b\right], 
\]
where we use the shorthand notation $ \langle m, \, X^2 \rangle := \int_0^1 X_t^2 \, m({\rm{d}} t)$. As in Chap. XI of \cite{revuz2013continuous} and Section 3 of \cite{EladAltman2019}, we consider $\phi=(\phi_r,r\geq 0)$ the unique solution, on $\mathbb{R}_{+}$, of the following problem:
\begin{equation}
\label{phi} \tag{$SL_{m}$}
\begin{cases}
\phi''(\d r) = 2 \, \mathbf{1}_{[0,1]}(r) \, \phi_{r} \, m({\rm{d}} r)  \\
\phi_0=1, \\
\phi > 0,  \phi ' \leq 0  \ \text{on} \ \mathbb{R}_{+},
\end{cases}
\end{equation} 
where the first equality is in the sense of distributions (see Appendix 8 of \cite{revuz2013continuous} for existence and uniqueness of a solution to this problem). As in Section 3 of \cite{EladAltman2019}, we also set
\begin{equation}
\label{definition_varrho}
\varrho_r :=   \int_{0}^{r} \phi_u^{-2} \d u, \qquad r \in [0,1].\end{equation}
With these notations at hand, we obtain the following result, which is a generalisation of Lemma 3.6 in \cite{EladAltman2019}:

\begin{lm}\label{lap_cond_bridge} 
For all $r \in (0,1)$, $\delta>0$ and $a,a',b \geq 0$, the following holds:
\begin{equation}\label{general_bridg2}
\begin{split}
& \int \exp (- \langle m, X^{2} \rangle) \ \Sigma^{\delta,r}_{a,a'}({\rm d}X \,|\, b) \\
& = 2 \exp \left( \frac{a^{2}}{2} \phi'_{0} \right)  \phi_1^{\delta/2-2} \phi_r^{-2} 
\frac{q^{\delta}_{\varrho_r} \left(a^{2},\frac{b^{2}}{\phi_r^{2}}\right) q^{\delta}_{\varrho_1 - \varrho_r}\left(\frac{b^{2}}{\phi_r^{2}},\frac{a'^{2}}{\phi_1^{2}}\right)}{b^{\delta-2} \, q^{\delta}_{1}(a^{2},a'^{2})} \\
& = 2 \exp \left( \frac{a^{2}}{2} \phi'_{0} \right)  \phi_1^{\delta/2-2} \phi_r^{-\delta} 
\varrho_{1}^{-\delta-1} \frac{q^{\delta,t}_{x,y} (z)}{z^{\delta/2-1}},
\end{split}
\end{equation}
where $x=\frac{a^{2}}{\varrho_{1}}$, $y=\frac{a'^{2}}{\varrho_{1} \phi_{1}}$, $z=\frac{b^{2}}{\varrho_{1} \phi_{r}^{2}}$, and $t=\frac{\varrho_{r}}{\varrho_{1}} \in [0,1]$. Here, $q^{\delta,t}_{x,y}$ denotes the density of the random variable $X_{t}$, when $X \overset{(d)}{=} Q^{\delta}_{x,y}$, see \eqref{one_pt_density_sqred_bridge}. 
\end{lm}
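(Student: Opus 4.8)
The plan is to reduce the statement to a computation for squared Bessel processes, where, by the additivity property, the weight $\exp(-\langle m, X^{2}\rangle)$ acts as a deterministic time change, and then to invoke the transformation of Lemma 3.3 in \cite{EladAltman2019}. This follows the scheme of the proof of Lemma 3.6 in \cite{EladAltman2019} (the case $a=a'=0$), the additional difficulty being the nonzero boundary values. First I would pass to the squared process: since $P^{\delta}_{a,a'}$ is the image of $Q^{\delta}_{a^{2},a'^{2}}$ under the map \eqref{sqrt_map}, setting $Y=X^{2}$ turns $\langle m, X^{2}\rangle$ into $\langle m, Y\rangle := \int_{0}^{1} Y_{u}\, m({\rm d}u)$ and the event $\{X_{r}=b\}$ into $\{Y_{r}=b^{2}\}$. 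Combining \eqref{relation_denisties_bes_besq} with the change of variables $b \mapsto b^{2}$ gives $p^{\delta,r}_{a,a'}(b)=2b\,q^{\delta,r}_{a^{2},a'^{2}}(b^{2})$, so that by the definition \eqref{Sigma},
\begin{equation*}
\int \exp(-\langle m, X^{2}\rangle)\,\Sigma^{\delta,r}_{a,a'}({\rm d}X\,|\,b) = \frac{2}{b^{\delta-2}}\,q^{\delta,r}_{a^{2},a'^{2}}(b^{2})\; E^{Q^{\delta}_{a^{2},a'^{2}}}\!\left[\exp(-\langle m, Y\rangle)\,\big|\,Y_{r}=b^{2}\right],
\end{equation*}
and it remains to evaluate the product on the right-hand side.

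Next I would disintegrate this product at time $r$. By the Markov property of the squared Bessel process, conditioning the bridge $Q^{\delta}_{a^{2},a'^{2}}$ on $\{Y_{r}=b^{2}\}$ makes its restrictions to $[0,r]$ and $[r,1]$ independent, while $\langle m, Y\rangle$ splits into the sum of its contributions on these two subintervals. Together with the bridge density \eqref{one_pt_density_sqred_bridge}, this produces the factorisation
\begin{equation*}
q^{\delta,r}_{a^{2},a'^{2}}(b^{2})\,E^{Q^{\delta}_{a^{2},a'^{2}}}\!\left[\exp(-\langle m, Y\rangle)\,\big|\,Y_{r}=b^{2}\right] = \frac{A\,B}{q^{\delta}_{1}(a^{2},a'^{2})},
\end{equation*}
where $A$ is the transition density, from $a^{2}$ at time $0$ to $b^{2}$ at time $r$, of the squared Bessel process weighted by $\exp(-\int_{[0,r]}Y_{u}\,m({\rm d}u))$, and $B$ is the analogous weighted density from $b^{2}$ to $a'^{2}$ over $[r,1]$.

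The heart of the argument is then the evaluation of $A$ and $B$ through the transformation of Lemma 3.3 in \cite{EladAltman2019}: by the additivity of squared Bessel processes, weighting by $\exp(-\langle m, Y\rangle)$ amounts, on each subinterval, to the deterministic time change governed by $\varrho$ (see \eqref{definition_varrho}) together with a spatial rescaling by $\phi^{2}$ and a Girsanov-type exponential factor, $\phi$ being the solution of \eqref{phi}. This expresses $A$ and $B$ as explicit powers of $\phi_{r}$ and $\phi_{1}$ times the \emph{standard} squared Bessel densities $q^{\delta}_{\varrho_{r}}(a^{2}, b^{2}/\phi_{r}^{2})$ and $q^{\delta}_{\varrho_{1}-\varrho_{r}}(b^{2}/\phi_{r}^{2}, a'^{2}/\phi_{1}^{2})$, the only surviving exponential being $\exp(\tfrac{a^{2}}{2}\phi'_{0})$ coming from the Dirichlet end at $0$, the endpoint exponentials being reabsorbed into the Gaussian part of the rescaled densities. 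Collecting these prefactors and dividing by $q^{\delta}_{1}(a^{2},a'^{2})$ yields the first equality in \eqref{general_bridg2}. The second equality is the same quantity rewritten: using the scaling relation $q^{\delta}_{s}(x',y')=c\,q^{\delta}_{cs}(cx',cy')$ for squared Bessel densities (with $c=\varrho_{1}^{-1}$), the transformation is read as mapping the weighted bridge to an unweighted squared Bessel bridge over $[0,1]$ with rescaled endpoints and reparametrised time, whose one-point density at the pinning level is exactly $q^{\delta,t}_{x,y}(z)$ for $t=\varrho_{r}/\varrho_{1}$ and $x,y,z$ as in the statement, the remaining $\phi$- and $\varrho$-powers being the associated Jacobian and normalisation constants.

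I expect the main obstacle to be this last step, namely the careful application of the transformation at both endpoints when $a,a'>0$. In the case $a=a'=0$ of \cite{EladAltman2019}, the relevant starting density \eqref{density_besq_x_zero} is elementary and no Girsanov exponential survives, whereas here one must work with the full density \eqref{density_besq_x_pos} involving the Bessel function $I_{\nu}$, keep track of the factor $\exp(\tfrac{a^{2}}{2}\phi'_{0})$ and of the spatial rescalings $b^{2}/\phi_{r}^{2}$ and $a'^{2}/\phi_{1}^{2}$, and check that the intermediate exponential factors do reorganise into standard squared Bessel densities. The case $a'=0$ should finally be recovered through the analytic continuation discussed in Remark \ref{analytic_ext}.
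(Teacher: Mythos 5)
Your reduction step is exactly the paper's: passing to the squared process via \eqref{relation_denisties_bes_besq} and \eqref{Sigma}, your intermediate identity coincides with \eqref{intermediate_expr}, and the final equality in \eqref{general_bridg2} is indeed the scaling relation $q^{\delta}_{s}(x',y')=c\,q^{\delta}_{cs}(cx',cy')$ you quote. The gap lies in the middle step, where you split the bridge at time $r$ by the Markov property \emph{first} and then propose to evaluate each weighted transition density $A$, $B$ by applying the transformation of Lemma 3.3 of \cite{EladAltman2019} "on each subinterval". That lemma is tied to the solution $\phi$ of \eqref{phi} for the \emph{full} measure $m$, subject to the global constraints $\phi>0$, $\phi'\le 0$ on all of $\mathbb{R}_+$, and it does not apply verbatim to the restricted problems. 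For the left segment, the Sturm--Liouville solution associated with $m\mathbf{1}_{[0,r]}$ is \emph{not} $\phi\big\rvert_{[0,r]}$: positivity and monotonicity on $\mathbb{R}_+$ force its right derivative at $r$ to vanish, whereas in general $\phi'_r<0$; so applying Lemma 3.3 there produces densities expressed through a different pair $(\psi,\varrho^{\psi})$, not the $q^{\delta}_{\varrho_r}(a^2,b^2/\phi_r^2)$ appearing in \eqref{general_bridg2}. For the right segment the correct solution is the renormalised shift $\phi_{r+\cdot}/\phi_r$, whose time change is $\phi_r^2(\varrho_{r+\cdot}-\varrho_r)$ rather than $\varrho$, so a further rescaling is needed. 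If instead one insists on using $\phi$ on both segments, one needs a generalised Girsanov identity with boundary corrections at the junction: $A$ then carries a factor $\exp\left(-\tfrac{b^2\phi'_r}{2\phi_r}\right)$ and $B$ the reciprocal factor, and it is their \emph{cancellation} in the product $AB$ --- not a reabsorption "into the Gaussian part of the rescaled densities", as you assert --- that leaves $\exp\left(\tfrac{a^2}{2}\phi'_0\right)$ as the only surviving exponential. As written, your key step is therefore not a direct citation of Lemma 3.3, and the mechanism you describe for the exponentials is not the correct one.

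The paper avoids all of this by reversing the order of the two operations: it applies Lemma 3.3 \emph{globally}, to functionals $f(X_r)g(X_1)$ of the whole path, which yields in one stroke the weighted joint density of $(X_r,X_1)$ under $Q^{\delta}_{a^2}$, namely $\exp\left(\tfrac{a^{2}}{2}\phi'_{0}\right)\phi_1^{\delta/2-2}\phi_r^{-2}\,q^{\delta}_{\varrho_r}\left(a^{2},x/\phi_r^{2}\right) q^{\delta}_{\varrho_1-\varrho_r}\left(x/\phi_r^{2},y/\phi_1^{2}\right)$; dividing by the unweighted joint density $q^{\delta}_{r}(a^{2},x)\,q^{\delta}_{1-r}(x,y)$ gives the conditional expectation, and substituting $x=b^{2}$, $y=a'^{2}$ into \eqref{intermediate_expr} concludes. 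Your route can be repaired (uniqueness for \eqref{phi} identifies the shifted solution on $[r,1]$, and the junction exponentials cancel), but these are precisely the arguments your proposal leaves unsupplied; conditioning after transforming, as the paper does, is what makes the computation a one-line consequence of the lemma.
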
 


\begin{rk}
The above lemma shows that, for all measure $m$ as above, all $a,a' \geq 0$ and $r \in (0,1)$, the function
\[b \to \int \exp (- \langle m, X^{2} \rangle) \ \Sigma^{\delta,r}_{a,a'}({\rm d}X \,|\, b) \]
is a smooth (actually analytic) function of $b^2$. In particular
\begin{equation}
\label{vanishing_derivative_sigma}
\frac{\rm d}{{\rm d}b} \left( \,\int \exp (- \langle m, X^{2} \rangle) \ \Sigma^{\delta,r}_{a,a'}({\rm d}X \,|\, b) \right) \biggr\rvert_{b=0} = 0. 
\end{equation}
\end{rk}


\begin{rk}
As a consequence of \eqref{general_bridg2}, in the special case $a=a'=0$, recalling the expressions \eqref{density_besq_x_pos} and \eqref{density_besq_x_zero}, we obtain
\begin{equation*}
\begin{split}
&\int \exp (- \langle m, X^{2} \rangle) \ \Sigma^{\delta,r}_{0,0}({\rm d}X \,|\, b) =\\
&= \frac1{2^{\frac\delta2-1}\,\Gamma(\frac{\delta}{2})} \,
\exp \left(-\frac{b^{2}\varrho_{1}}{2 \phi_r^{2}  \varrho_r (\varrho_1 - \varrho_r)} \right)   \left(2 \phi_r^{2} \phi_{1} \varrho_r (\varrho_1 - \varrho_r) \right)^{-\delta/2},
\end{split}
\end{equation*} 
which coincides with the formula (3.15) in \cite{EladAltman2019}.
\end{rk}

\begin{proof}[Proof of Lemma \ref{lap_cond_bridge}]
First note that by the relation \eqref{relation_denisties_bes_besq} and by the expression \eqref{Sigma}, we have

\begin{equation}
\label{intermediate_expr} 
\begin{split}
& \int \exp (- \langle m, X^{2} \rangle) \ \Sigma^{\delta,r}_{a,a'}({\rm d}X \,|\, b) = \\
& = 2  \frac{q^{\delta}_{r}(a^{2},b^{2}) q^{\delta}_{1-r}(b^{2},a'^{2})}{b^{\delta-2} \, q^{\delta}_{1}(a^{2},a'^{2})}  Q^{\delta}_{a^{2},a'^{2}} [\exp (- \langle m, X \rangle ) \, | \, X_{r} = b^{2}]. 
\end{split}
\end{equation}
To obtain the claim, it therefore suffices to compute 
\[ Q^{\delta}_{a^{2},a'^{2}} [\exp (- \langle m, X \rangle ) \, | \, X_{r} = b^{2}], \]
a quantity which we can rewrite as
\[Q^{\delta}_{a^{2}} [\exp (- \langle m, X \rangle) \, | \, X_{r} = b^2, X_{1}=a'^2]. \]
But, arguing as in the proof of Lemma 3.6 in \cite{EladAltman2019}, we deduce from Lemma 3.3 in \cite{EladAltman2019} that, for all $x,y \geq 0$  
\[ \begin{split} 
& Q^{\delta}_{a^{2}} [\exp (- \langle m, X \rangle) \, | \, X_{r} = x, X_{1}=y] = \\
& = \exp \left( \frac{a^{2}}{2} \phi'_{0} \right)  \phi_1^{\delta/2-2} \phi_r^{-2} \frac{q^{\delta}_{\varrho_r} \left(a^{2},\frac{x}{\phi_r^{2}}\right) q^{\delta}_{\varrho_1 - \varrho_r}\left(\frac{x}{\phi_r^{2}},\frac{y}{\phi_1^{2}}\right)}{q^{\delta}_{r}(a^{2},x) \,q^{\delta}_{1-r}(x,y)}. 
\end{split}\]
Applying this equality to $x=b^{2}$ and $y=a'^{2}$, and replacing in \eqref{intermediate_expr}, we obtain \eqref{general_bridg2}


\end{proof}

\section{Integration by parts formulae}

\label{sect_ibpf}

As in \cite{EladAltman2019}, we denote by $S$ the linear span of the set of functionals on $C([0,1])$ of the form
\begin{align}
\begin{cases}
C([0,1]) \to \mathbb{R} \\
X \mapsto \exp \left( - \langle m, X^{2} \rangle \right),
\end{cases}
\end{align}
where $m$ is a finite Borel measure on $[0,1]$. The elements of $S$ are the functionals for which we will derive our IbPFs with respect to the laws of Bessel bridges. In the sequel, we shall also use the notation $\langle \cdot, \cdot \rangle$ for the $L^2$ inner product on $(0,1)$:
\[ \langle f, g \rangle := \int_0^1 f_r \, g_r \, \d r, \quad f,g \in L^2(0,1). \]

\subsection{The statement}

Recalling the definition
\[\kappa(\delta) := \frac{(\delta-3)(\delta-1)}{4}, \qquad \delta\in\R, \]
we can now state the main theorem of this section, which generalises Theorem 4.1 of \cite{EladAltman2019} to the case of Bessel bridges with arbitrary boundary values:

\begin{thm}
\label{thm_ibpf_positive_cond}
Let $a,a' \geq 0$, $\delta \in (0,\infty) \setminus \{1,3\}$, and $k:=\lfloor \frac{3-\delta}{2} \rfloor \leq 1$. Then, for all $\Phi \in S$ and $h \in C^2_c(0,1)$,
\begin{equation}
\label{exp_fst_part_ibpf_a_b}
\begin{split}
& E^{\delta}_{a,a'} (\partial_{h} \Phi (X) ) + E^{\delta}_{a,a'} (\langle h '' , X \rangle \, \Phi(X) )= 
 \\ & =-\kappa(\delta)\int_{0}^{1}  
 h_{r}  \int_0^\infty b^{\delta-4} \Big[ \mathcal{T}^{\,2k}_{b} \, \Sigma^{\delta,r}_{a,a'}(\Phi (X) \,|\, \cdot\,) \Big]
  \d b \d r.
\end{split}
\end{equation}
On the other hand, when $\delta \in \{1,3\}$, the following formulae hold: for all $\Phi \in S$ and $h \in C^2_c(0,1)$,
\begin{equation}
\label{exp_fst_part_ibpf_a_b_3}
E^{3}_{a,a'} (\partial_{h} \Phi (X) ) + E^{3}_{a,a'} (\langle h '' , X \rangle \, \Phi(X) ) = 
-\frac{1}{2} \int_{0}^{1} h_{r} \, \Sigma^{3,r}_{a,a'}(\Phi (X) \,|\, 0\,) \d r, 
\end{equation}
and
\begin{equation}
\label{exp_fst_part_ibpf_a_b_1}
\begin{split}
E^{1}_{a,a'} (\partial_{h} \Phi (X) ) + E^{1}_{a,a'} (\langle h '' , X \rangle \, \Phi(X) ) = 
\frac{1}{4} \int_{0}^{1} h_{r} \, \frac{{\rm d}^{2}}{{\rm d} b^{2}} \, \Sigma^{1,r}_{a,a'} (\Phi(X) \, | \, b)  \biggr\rvert_{b=0}  \d r. 
\end{split}
\end{equation}
\end{thm}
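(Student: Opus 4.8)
The plan is to collapse the three formulae \eqref{exp_fst_part_ibpf_a_b}, \eqref{exp_fst_part_ibpf_a_b_3} and \eqref{exp_fst_part_ibpf_a_b_1} into a single identity expressed through the distributions $(\mu_{\alpha})_{\alpha\in\mathbb{R}}$, and to establish that identity from the explicit Laplace transform of Lemma~\ref{lap_cond_bridge} combined with the elementary calculus of Section~\ref{sect_prelude}. By bilinearity it suffices to take $\Phi(X)=\exp(-\langle m,X^{2}\rangle)$ for a finite Borel measure $m$. For such $\Phi$, Lemma~\ref{lap_cond_bridge} shows that $b\mapsto\Sigma^{\delta,r}_{a,a'}(\Phi(X)\,|\,b)$ is a smooth even function of $b$, so its odd Taylor coefficients at $0$ vanish and, for $\delta\in(0,3)\setminus\{1\}$, the order $2k$ in \eqref{exp_fst_part_ibpf_a_b} agrees with the Taylor order entering the definition of $\mu_{\delta-3}$. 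Hence
\[ \int_{0}^{\infty} b^{\delta-4}\,\mathcal{T}^{\,2k}_{b}\,\Sigma^{\delta,r}_{a,a'}(\Phi(X)\,|\,\cdot\,)\,\d b = \Gamma(\delta-3)\,\big\langle \mu_{\delta-3},\,\Sigma^{\delta,r}_{a,a'}(\Phi(X)\,|\,\cdot\,)\big\rangle, \]
and since $\kappa(\delta)\,\Gamma(\delta-3)=\frac{\Gamma(\delta)}{4(\delta-2)}$ all three statements reduce to proving, for every $\delta>0$,
\[ E^{\delta}_{a,a'}(\partial_{h}\Phi)+E^{\delta}_{a,a'}(\langle h'',X\rangle\,\Phi) = -\frac{\Gamma(\delta)}{4(\delta-2)}\int_{0}^{1} h_{r}\,\big\langle \mu_{\delta-3},\,\Sigma^{\delta,r}_{a,a'}(\Phi(X)\,|\,\cdot\,)\big\rangle\,\d r. \]
The values $\delta\in\{1,3\}$ are the specialisations $\mu_{-2}\colon\varphi\mapsto\varphi''(0)$ and $\mu_{0}=\delta_{0}$, and the apparent pole at $\delta=2$ is harmless since $\langle\mu_{-1},\Sigma^{\delta,r}_{a,a'}(\Phi(X)\,|\,\cdot\,)\rangle=0$ by \eqref{vanishing_derivative_sigma}.

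Next I rewrite the left-hand side. Differentiating yields $\partial_{h}\Phi(X)=-2\,\Phi(X)\,\langle m,hX\rangle$, so that, putting $G(r):=E^{\delta}_{a,a'}(\Phi(X)\,X_{r})$,
\[ E^{\delta}_{a,a'}(\partial_{h}\Phi)+E^{\delta}_{a,a'}(\langle h'',X\rangle\,\Phi) = \int_{0}^{1} h''_{r}\,G(r)\,\d r - 2\int_{0}^{1} h_{r}\,G(r)\,m(\d r). \]
Since $\Phi\in S$, the function $G$ depends on $r$ only through $\phi$ and $\varrho$; as $\phi$ solves $(SL_{m})$, $G$ is $C^{1}$ with $G'$ of bounded variation, and two integrations by parts (the boundary terms vanishing because $h\in C^{2}_{c}(0,1)$) turn the left-hand side into $\int_{0}^{1} h_{r}\,\big(G''(\d r)-2\,G(r)\,m(\d r)\big)$. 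Disintegrating over $X_{r}$ and using \eqref{Sigma},
\[ G(r)=\int_{0}^{\infty} b^{\delta}\,\Sigma^{\delta,r}_{a,a'}(\Phi(X)\,|\,b)\,\d b = \Gamma(\delta+1)\,\big\langle \mu_{\delta+1},\,\Sigma^{\delta,r}_{a,a'}(\Phi(X)\,|\,\cdot\,)\big\rangle. \]
Everything is thus reduced to identifying the distribution $G''(\d r)-2\,G(r)\,m(\d r)$ on $(0,1)$.

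The core step is the computation of $G''$. Differentiating the explicit expression of $\Sigma^{\delta,r}_{a,a'}(\Phi\,|\,\cdot\,)$ from Lemma~\ref{lap_cond_bridge} (equivalently Lemma~\ref{lap_cond_bridge_uncond}) twice in $r$ and using $\varrho'_{r}=\phi_{r}^{-2}$ together with $\phi''=2\,\mathbf{1}_{[0,1]}\,\phi\,m$, the singular, $m$-charged part of $G''$ equals exactly $2\,G(r)\,m(\d r)$ and cancels the second term, while the remaining absolutely continuous part is, by the forward equation \eqref{pde_densities_only_adjoint} for the squared Bessel bridge densities, the square of the Bessel generator $\mathcal{L}=\tfrac12\big(\partial_{b}^{2}+\tfrac{\delta-1}{b}\partial_{b}\big)$ applied to $\Sigma^{\delta,r}_{a,a'}(\Phi\,|\,\cdot\,)$. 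Pairing with $\mu_{\delta+1}$ and invoking the calculus of Section~\ref{sect_prelude} — Proposition~\ref{thm_ibpf_mu} as $\langle\mu_{\alpha},\varphi'\rangle=-\langle\mu_{\alpha-1},\varphi\rangle$ and Lemma~\ref{lemma_mu} as $\langle\mu_{\alpha},x\varphi\rangle=\alpha\langle\mu_{\alpha+1},\varphi\rangle$ — gives $\langle\mu_{\alpha},\mathcal{L}f\rangle=\tfrac12\frac{\alpha-\delta}{\alpha-1}\langle\mu_{\alpha-2},f\rangle$, so that two applications carry $\mu_{\delta+1}\to\mu_{\delta-1}\to\mu_{\delta-3}$ with total constant $\frac{1}{2\delta}\cdot\big(-\frac{1}{2(\delta-2)}\big)$. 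Multiplying by $\Gamma(\delta+1)$ produces precisely $-\frac{\Gamma(\delta)}{4(\delta-2)}\,\langle\mu_{\delta-3},\Sigma^{\delta,r}_{a,a'}(\Phi\,|\,\cdot\,)\rangle$, which is the sought right-hand side.

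Finally, all the pairings $\langle\mu_{\alpha},\cdot\rangle$ and prefactors are analytic in $\delta$ (the distributions by Section~\ref{sect_prelude}, the $\phi,\varrho$-quantities by their definitions), so, as in the proof of Lemma~\ref{lemma_mu}, it suffices to match the constant on any convenient range of $\delta$ and extend by analyticity; the integer cases $\delta=1,3$ then follow by evaluating $\mu_{-2}$ and $\mu_{0}$, giving \eqref{exp_fst_part_ibpf_a_b_1} and \eqref{exp_fst_part_ibpf_a_b_3}. I expect the main obstacle to be the core step: separating $G''$ into its $m$-singular part (which must cancel against $2Gm$) and its absolutely continuous part, showing via \eqref{pde_densities_only_adjoint} that the latter is exactly $\mathcal{L}^{2}$ applied to $\Sigma^{\delta,r}_{a,a'}(\Phi\,|\,\cdot\,)$ — in particular that the $\phi_{r},\varrho_{r}$ prefactors and the change of variables $z=b^{2}/(\varrho_{1}\phi_{r}^{2})$ conspire to leave the constant in front equal to $1$ — and then tracking the multiplicative constants through the two index-lowering steps. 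A secondary point is the rigorous justification of the distributional $b$-pairings when $\delta<3$ and of the double $r$-integration by parts under only the regularity guaranteed by $\Phi\in S$.
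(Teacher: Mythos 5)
Your overall mechanism (reduce to $\Phi = \exp(-\langle m, X^2\rangle)$, cancel the $m$-charged part of $G''$ against $2\,G\,m$ via $\phi'' = 2\phi m$, then lower the index of $\mu$ twice to land on $\mu_{\delta-3}$ with constant $-\frac{\Gamma(\delta)}{4(\delta-2)}$ — constants you track correctly) is exactly the mechanism of the paper, \emph{but} the paper applies it only to the unconstrained laws $P^{\delta}_{a}$ (Theorem \ref{thm_ibpf_positive_cond_unconst}, via Lemma \ref{unconstrained_secondder}), and then deduces the bridge case by conditioning. You run the computation directly under $P^{\delta}_{a,a'}$, and that is where your proof has a genuine gap.

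Your core step asserts that, after the $m$-singular cancellation, the absolutely continuous part of $G''$ is, ``by the forward equation \eqref{pde_densities_only_adjoint} for the squared Bessel bridge densities'', given by $\mathcal{L}^{2}$ acting on $\Sigma^{\delta,r}_{a,a'}(\Phi\,|\,\cdot\,)$. But \eqref{pde_densities_only_adjoint} is the Kolmogorov equation for the transition density $q^{\delta}_{t}(x,y)$ of the squared Bessel \emph{process}; the pinned bridge density appearing in Lemma \ref{lap_cond_bridge},
\begin{equation*}
q^{\delta,t}_{x,y}(z) = \frac{q^{\delta}_{t}(x,z)\, q^{\delta}_{1-t}(z,y)}{q^{\delta}_{1}(x,y)},
\end{equation*}
does not satisfy it. Differentiating in $t$ gives
\begin{equation*}
\partial_{t}\, q^{\delta,t}_{x,y}(z) = \frac{\bigl[\partial_{t} q^{\delta}_{t}(x,z)\bigr]\, q^{\delta}_{1-t}(z,y) \;-\; q^{\delta}_{t}(x,z)\,\bigl[\partial_{s} q^{\delta}_{s}(z,y)\bigr]\big\rvert_{s=1-t}}{q^{\delta}_{1}(x,y)},
\end{equation*}
i.e.\ the forward operator $\mathcal{L}^{*}_{z}$ acting through the first factor \emph{minus} the backward operator $\mathcal{L}_{z}=2z\partial_{z}^{2}+\delta\partial_{z}$ acting through the second. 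Since $\mathcal{L}(fg) = (\mathcal{L}f)g + f(\mathcal{L}g) + 4z\,f'g'$, the combination $(\mathcal{L}^{*}f)g - f(\mathcal{L}g)$ is not any power of $\mathcal{L}$ or $\mathcal{L}^{*}$ applied to the product: it has Lagrange-identity cross terms. Consequently the identification of the a.c.\ part of $G''$, and hence the whole two-step index-lowering $\mu_{\delta+1}\to\mu_{\delta-1}\to\mu_{\delta-3}$, is unjustified precisely for the bridge laws you work with. This is not a presentational issue; it is the reason the paper proves the IbPF first for $P^{\delta}_{a}$, where $\Sigma^{\delta,r}_{a}(\Phi\,|\,\cdot\,)$ involves the single density $q^{\delta}_{\varrho_{r}}(a^{2},\cdot)$ and \eqref{pde_densities_only_adjoint} applies verbatim, and then passes to $P^{\delta}_{a,a'}$ by inserting $e^{-\lambda X_{1}^{2}}$ (which keeps the functional in $S$, and leaves $\partial_{h}$ unaffected since $h_{1}=0$), disintegrating both sides over $X_{1}$, and invoking injectivity of the Laplace transform together with continuity in $a'$. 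To repair your argument, either adopt this conditioning route, or carry out the genuinely harder bilinear computation for the pinned density — which is exactly the ``main obstacle'' you flagged, and which the forward equation alone cannot resolve.
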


%

\begin{rk}
For all $\delta \in (1,3)$ the right-hand side in the IbPF \eqref{exp_fst_part_ibpf_a_b} takes the form:
\[ -\kappa(\delta)\int_{0}^{1} h_{r}  \int_0^\infty b^{\delta-4} \Big[ \Sigma^{\delta,r}_{a,a'}(\Phi (X) \,|\, b) - \Sigma^{\delta,r}_{a,a'}(\Phi (X) \,|\, 0) \Big]  \d b \d r. \]
Thus, as already noted in Remark 4.3 of \cite{EladAltman2019} for the case of bridges from $0$ to $0$, there is no transition at $\delta=2$ at the level of the IbPFs. 
However, we do conjecture that a transition should occur for $\delta=2$ at the level of the SPDEs: see Section \ref{sect_dynamics} below.
\end{rk}

\begin{rk}
Recalling the definition \ref{def_mu} of $\mu_{\alpha}$ for $\alpha < 0$, we can write all the above IbPFs in a unified way as follows:
\begin{equation}
\label{unified_ibpf}
\begin{split}
& E^{\delta}_{a,a'} (\partial_{h} \Phi (X) ) + E^{\delta}_{a,a'} (\langle h '' , X \rangle \, \Phi(X) ) 
 \\ & =-\frac{\Gamma (\delta)}{4(\delta-2)} \int_{0}^{1}  
 h_{r} \, \langle \mu_{\delta-3}, \Sigma^{\delta,r}_{a,a'}(\Phi (X) \,|\, \cdot\,) \rangle \d r, 
\end{split}
\end{equation}
where the singularity at $\delta=2$ is compensated by the vanishing at $\delta=2$ of the quantity $\langle \mu_{\delta-3}, \Sigma^{\delta,r}_{a,a'}(\Phi (X) \,|\, \cdot\,) \rangle$ as a consequence of \eqref{vanishing_derivative_sigma}. 
Actually the proof of the formulae of Theorem \ref{thm_ibpf_positive_cond} will be based on rewriting both sides of the equalities using the family of distributions $(\mu_{\alpha})_{\alpha \in \mathbb{R}}$: see Lemma \ref{unconstrained_secondder} and its proof. Note that in that lemma there appears $\mu_{\frac{\delta-3}{2}}$ rather than $\mu_{\delta-3}$ because, for convenience, we work there with squared Bessel processes rather than Bessel processes.  
\end{rk}

%
As a consequence of the above theorem, we retrieve the following known results (see Chapter 6 of \cite{zambotti2017random}):
  
\begin{prop}
\label{already_known_ibpf0}
Let $\Phi \in S$ and $h \in C^{2}_{c}(0,1)$. Then, for all $a \geq 0$ and $\delta > 3$, the following IbPF holds:
\begin{equation}
\label{ibpf_3+0}
 E^{\delta}_{a,a} (\partial_{h} \Phi (X) ) + E^{\delta}_{a,a} (\langle h '' , X \rangle \, \Phi(X) ) = - \kappa(\delta) \, E^{\delta}_{a,a} (\langle h , X^{-3} \rangle \, \Phi(X) ). 
\end{equation}
Moreover, for $\delta = 3$, the following IbPF holds:
\begin{equation}
\label{ibpf_30}
\begin{split}
& E^{3}_{a,a} (\partial_{h} \Phi (X) )+ E^{3}_{a,a} (\langle h '' , X \rangle \, \Phi(X) ) = 
\\ & = - \int_{0}^{1} \d r \,  h_r \, \gamma(r,a) \, E^{3}_{a,a} [\Phi(X) \, | \, X_{r} = 0]
\end{split} 
\end{equation}
where, for all $(r,a) \in (0,1) \times \mathbb{R}_{+}$
\[ \gamma(r,a) := \frac{1}{\sqrt{2 \pi r^{3} (1-r)^{3}}} \left( \mathbf{1}_{a=0} +  \mathbf{1}_{a>0} \frac{2a^{2}\exp\left(-\frac{a^{2}}{2r(1-r)}\right)} {1-e^{-2a^{2}}}\right).  \] 

\end{prop}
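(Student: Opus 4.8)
The plan is to derive Proposition~\ref{already_known_ibpf0} as a specialization of the main IbPF \eqref{exp_fst_part_ibpf_a_b} (for $\delta>3$) and \eqref{exp_fst_part_ibpf_a_b_3} (for $\delta=3$) to the case of equal boundary values $a=a'=a$. The key observation is that for $\delta \geq 3$ one has $k=\lfloor \frac{3-\delta}{2}\rfloor \leq 0$, so the Taylor-remainder operator $\mathcal{T}^{\,2k}_b$ either subtracts nothing (when $k<0$, i.e.\ $\delta>3$) or subtracts only the value at $0$ (when $k=0$, i.e.\ $\delta=3$). Thus for $\delta>3$ the inner integral in \eqref{exp_fst_part_ibpf_a_b} is simply $\int_0^\infty b^{\delta-4}\,\Sigma^{\delta,r}_{a,a}(\Phi(X)\,|\,b)\,\d b$, and the task reduces to identifying this as the classical drift term $E^\delta_{a,a}(\langle h, X^{-3}\rangle\,\Phi(X))$.

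First I would unwind the definition \eqref{Sigma} of $\Sigma^{\delta,r}_{a,a}$. Using the occupation-time heuristic made rigorous by the one-point density $p^{\delta,r}_{a,a}$, I expect the identity
\[
\int_0^1 h_r \int_0^\infty b^{\delta-4}\,\Sigma^{\delta,r}_{a,a}(\Phi(X)\,|\,b)\,\d b\,\d r
= E^\delta_{a,a}\!\left(\langle h, X^{-3}\rangle\,\Phi(X)\right),
\]
which is precisely the inverse of the disintegration that defines $\Sigma$. Concretely, since $\Sigma^{\delta,r}_{a,a}({\rm d}X\,|\,b)=\frac{p^{\delta,r}_{a,a}(b)}{b^{\delta-1}}P^\delta_{a,a}[{\rm d}X\,|\,X_r=b]$, integrating $b^{\delta-4}=b^{-3}\cdot b^{\delta-1}$ against it recovers $\int_0^\infty b^{-3}\,p^{\delta,r}_{a,a}(b)\,E^\delta_{a,a}[\Phi(X)\,|\,X_r=b]\,\d b = E^\delta_{a,a}(X_r^{-3}\,\Phi(X))$ by the definition of conditional expectation with respect to $X_r$. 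Multiplying by $h_r$ and integrating over $r\in(0,1)$ against Fubini then yields $E^\delta_{a,a}(\langle h, X^{-3}\rangle\,\Phi(X))$, and \eqref{ibpf_3+0} follows. I would check that the $b^{-3}$ singularity at the origin is integrable, which holds because $\delta>3$ gives $b^{\delta-4}$ integrable near $0$ after accounting for the $b^{\delta-1}$ weight.

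For $\delta=3$, formula \eqref{exp_fst_part_ibpf_a_b_3} already gives the right-hand side as $-\frac12\int_0^1 h_r\,\Sigma^{3,r}_{a,a}(\Phi(X)\,|\,0)\,\d r$, so the remaining work is purely computational: I would evaluate $\Sigma^{3,r}_{a,a}(\Phi(X)\,|\,0)=\frac{p^{3,r}_{a,a}(0)}{0^{2}}\cdot(\dots)$, using the analytic extension at $b=0$ guaranteed by Remark~\ref{analytic_ext}, and identify the resulting constant as $\frac12\gamma(r,a)\,E^3_{a,a}[\Phi(X)\,|\,X_r=0]$. This requires the explicit Bessel density \eqref{density_besq_x_pos}--\eqref{density_besq_x_zero} with $\delta=3$ (so $\nu=1/2$, and $I_{1/2}$ is elementary, yielding a $\sinh$), substituted into \eqref{one_pt_density_bridge}. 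The main obstacle I anticipate is precisely this evaluation of the limit $b\to 0$ in the $\delta=3$ case: one must carefully take the analytic extension of $\frac{p^{3,r}_{a,a}(b)}{b^2}$ at $b=0$ and reconcile the two regimes $a=0$ and $a>0$ to produce the piecewise expression for $\gamma(r,a)$, with the factor $\frac{2a^2 e^{-a^2/2r(1-r)}}{1-e^{-2a^2}}$ emerging from the ratio of modified Bessel functions (equivalently $\sinh$ terms) in the density. The $\delta>3$ part, by contrast, should be essentially a direct disintegration argument with no genuine analytic difficulty.
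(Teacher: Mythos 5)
Your proposal is correct and follows essentially the same route as the paper: for $\delta>3$ the paper likewise specializes the general IbPF and invokes the disintegration of $P^{\delta}_{a,a}$ along $X_r$ (deferring to Prop.\ 4.5 of \cite{EladAltman2019}, which is exactly the argument you spell out), and for $\delta=3$ it identifies $\frac{1}{2}\lim_{\epsilon \to 0} p^{3,r}_{a,a}(\epsilon)/\epsilon^{2} = \gamma(r,a)$ via the explicit densities, just as you plan. The only difference is one of presentation: you write out the disintegration and integrability check explicitly where the paper cites its predecessor.
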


\begin{proof}
For $\delta>3$, the result follows as in the proof of Prop. 4.5 of \cite{EladAltman2019}. 
For $\delta=3$, it suffices to note that, for all $r \in (0,1)$

\[ \frac{1}{2} \, \underset{\epsilon \to 0}{\lim} \,  \frac{p^{3,r}_{a,a}(\epsilon)}{\epsilon^{2}} = \gamma(r,a),  \]

so that

\[\frac{1}{2} \, \Sigma^{3,r}_{a,a}(\Phi (X) \,|\, 0\,)  = \gamma(r,a) E^{3}_{a,a} [\Phi(X) \, | \, X_{r} = 0]. \]
and the proof is complete.
\end{proof}

The next two sections are devoted to the proof of the above IbPFs. We will actually first state and prove similar IbPFs for the laws of Bessel processes (with the value of $X_1$ unconstrained) for which the computations are  lighter than in the case of bridges, and we will then obtain the results for Bessel bridges by conditioning.

\subsection{Case of unconstrained Bessel processes}

We first introduce the following: 

\begin{df}
For all $a, b \geq 0$ and $r\in(0,1)$, we consider the measure $\Sigma^{\delta,r}_{a}({\rm d}X \,|\, b)$ on $C([0,1])$ defined by
\begin{equation}\label{Sigma_unconst}
\Sigma^{\delta,r}_{a}({\rm d}X \,|\, b) := \frac{p^{\delta}_{r}(a,b)}{b^{\delta-1}} \,
 P^{\delta}_{a} [ \d X \, | \, X_{r} = b].
\end{equation}
\end{df}

\begin{lm}\label{lap_cond_bridge_uncond} 
For all $r \in (0,1)$, $\delta>0$ and $a,b \geq 0$, the following holds
\begin{equation}\label{general_bridg2_uncond}
\begin{split}
 \int \exp (- \langle m, X^{2} \rangle) \ \Sigma^{\delta,r}_{a}({\rm d}X \,|\, b) = 
 2 \exp \left( \frac{a^{2}}{2} \phi'_{0} \right)  \phi_1^{\delta/2} \phi_r^{-2} \frac{q^{\delta}_{\varrho_r} \left(a^{2},\frac{b^{2}}{\phi_r^{2}}\right)}{b^{\delta-2}},
\end{split}
\end{equation}
where $\phi$ and $\varrho$ are defined by \eqref{phi} and \eqref{definition_varrho}. In particular, for $a=0$, we have
\begin{equation*}
\int \exp (- \langle m, X^{2} \rangle) \ \Sigma^{\delta,r}_{0}({\rm d}X \,|\, b) =
\frac1{2^{\frac\delta2-1}\,\Gamma(\frac{\delta}{2})} \,
\exp \left(-\frac{b^{2}}{2\phi_{r}^{2} \varrho_{r}} \right)  \left( \frac{\phi_{1}}{\phi_{r}^{2} \varrho_{r}} \right)^{\delta/2}.
\end{equation*}

\end{lm}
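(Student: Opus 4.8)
The plan is to reduce everything to a computation for squared Bessel processes, exactly as in the proof of Lemma \ref{lap_cond_bridge}, but with the conditioning at the final time removed. First I would rewrite the left-hand side of \eqref{general_bridg2_uncond} using the definition \eqref{Sigma_unconst} of $\Sigma^{\delta,r}_a$ together with the fact that $P^{\delta}_a$ is the image of $Q^{\delta}_{a^2}$ under the map \eqref{sqrt_map}; this gives
\[ \int \exp(-\langle m, X^2\rangle)\,\Sigma^{\delta,r}_a(\d X\,|\,b) = \frac{p^{\delta}_r(a,b)}{b^{\delta-1}}\, Q^{\delta}_{a^2}\!\left[\exp(-\langle m, X\rangle)\,\middle|\,X_r = b^2\right]. \]
Using the density relation \eqref{relation_denisties_bes_besq} in the form $p^{\delta}_r(a,b) = 2b\,q^{\delta}_r(a^2,b^2)$, the prefactor becomes $2\,q^{\delta}_r(a^2,b^2)/b^{\delta-2}$, so the whole problem boils down to evaluating the singly-conditioned Laplace transform $Q^{\delta}_{a^2}[\exp(-\langle m, X\rangle)\,|\,X_r = x]$ with $x = b^2$.

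The key step is to obtain this singly-conditioned transform from the doubly-conditioned formula already established in the proof of Lemma \ref{lap_cond_bridge} via Lemma 3.3 of \cite{EladAltman2019} (alternatively one may invoke that lemma directly, with a single conditioning, but the integrate-out argument below is self-contained). By the tower property and the Markov property of $Q^{\delta}_{a^2}$, the conditional law of $X_1$ given $X_r = x$ has density $q^{\delta}_{1-r}(x,y)\,\d y$, whence
\[ Q^{\delta}_{a^2}\!\left[\exp(-\langle m, X\rangle)\,\middle|\,X_r = x\right] = \int_0^\infty Q^{\delta}_{a^2}\!\left[\exp(-\langle m, X\rangle)\,\middle|\,X_r = x,\,X_1 = y\right] q^{\delta}_{1-r}(x,y)\,\d y. \]
Substituting the doubly-conditioned expression, the factor $q^{\delta}_{1-r}(x,y)$ cancels against the one in its denominator, leaving $\int_0^\infty q^{\delta}_{\varrho_1-\varrho_r}(x/\phi_r^2,\,y/\phi_1^2)\,\d y$. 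The change of variables $u = y/\phi_1^2$ together with the conservativeness of the $\delta$-dimensional squared Bessel process, i.e. the fact that $q^{\delta}_t(z,\cdot)$ is a probability density so that $\int_0^\infty q^{\delta}_t(z,u)\,\d u = 1$, turns this integral into $\phi_1^2$. This extra factor $\phi_1^2$ is precisely what upgrades the exponent $\phi_1^{\delta/2-2}$ of the bridge case \eqref{general_bridg2} to the $\phi_1^{\delta/2}$ of \eqref{general_bridg2_uncond}, and after cancelling $q^{\delta}_r(a^2,b^2)$ against the prefactor I would read off exactly the claimed formula.

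Finally, for the special case $a = 0$, I would insert the explicit density \eqref{density_besq_x_zero} for $q^{\delta}_{\varrho_r}(0, b^2/\phi_r^2)$, use $\exp(\tfrac{a^2}{2}\phi'_0) = 1$, and simplify: the factor $\left(b^2/\phi_r^2\right)^{\delta/2-1}$ cancels the $b^{\delta-2}$ in the denominator and combines with $\phi_r^{-2}$ into $\phi_r^{-\delta}$, yielding the stated closed form with prefactor $\left(\phi_1/(\phi_r^2\varrho_r)\right)^{\delta/2}/(2^{\delta/2-1}\Gamma(\delta/2))$ and Gaussian factor $\exp(-b^2/(2\phi_r^2\varrho_r))$. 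I do not expect any genuine obstacle here: the only point deserving a word of justification is the interchange of the integral over $y$ with the conditional expectation and the evaluation $\int_0^\infty q^{\delta}_{\varrho_1-\varrho_r}(\cdot,\cdot)\,\d y = \phi_1^2$ via conservativeness, all the remaining manipulations being the same algebra as in Lemma \ref{lap_cond_bridge}.
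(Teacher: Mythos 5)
Your proof is correct and takes essentially the same route as the paper: the paper's one-line proof rests on the disintegration $\Sigma^{\delta,r}_{a}({\rm d}X \,|\, b) = \int_{0}^{\infty} \Sigma^{\delta,r}_{a,a'}({\rm d}X \,|\, b)\, p^{\delta}_{1}(a,a')\, {\rm d}a'$ followed by integrating the bridge formula of Lemma \ref{lap_cond_bridge} over $a'$, which is precisely your tower-property argument phrased at the level of the measures $\Sigma$ rather than of conditional expectations for the squared Bessel process. The computational core is identical in both cases: the $q^{\delta}_{1-r}$ factor cancels, and the change of variables together with $\int_0^\infty q^{\delta}_t(z,u)\,{\rm d}u = 1$ (valid since $\delta>0$) produces the factor $\phi_1^{2}$ that upgrades $\phi_1^{\delta/2-2}$ to $\phi_1^{\delta/2}$.
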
 

\begin{proof}
These equalities follow from Lemma \ref{lap_cond_bridge} upon noticing that, for all $a \geq 0$,

\[ \Sigma^{\delta,r}_{a}({\rm d}X \,|\, b) = \int_{0}^{\infty} \Sigma^{\delta,r}_{a,a'}({\rm d}X \,|\, b)  \, p^{\delta}_{1}(a,a') \, \d a'. \]

\end{proof}

\begin{thm}
\label{thm_ibpf_positive_cond_unconst}
Let $a\geq 0$, $\delta \in (0,\infty) \setminus \{1,3\}$, and $k:=\lfloor \frac{3-\delta}{2} \rfloor \leq 1$. Then, for all $\Phi \in S$ and $h \in C^2_c(0,1)$,
\begin{equation}
\label{exp_fst_part_ibpf_a_unconst}
\begin{split}
& E^{\delta}_{a} (\partial_{h} \Phi (X) ) + E^{\delta}_{a} (\langle h '' , X \rangle \, \Phi(X) )= 
 \\ &-\kappa(\delta)\int_{0}^{1}  
 h_{r}  \int_0^\infty b^{\delta-4} \Big[ \mathcal{T}^{\,2k}_{b} \, \Sigma^{\delta,r}_{a}(\Phi (X) \,|\, \cdot\,) \Big]
  \d b \d r.
\end{split}
\end{equation}
On the other hand, when $\delta \in \{1,3\}$, the following formulae hold for all $\Phi \in S$ and $h \in C^2_c(0,1)$:
\begin{equation}
\label{exp_fst_part_ibpf_a_3_unconst}
E^{3}_{a} (\partial_{h} \Phi (X) ) + E^{3}_{a} (\langle h '' , X \rangle \, \Phi(X) ) = 
-\frac{1}{2} \int_{0}^{1} h_{r} \, \Sigma^{3,r}_{a}(\Phi (X) \,|\, 0\,) \d r, 
\end{equation}
and
\begin{equation}
\label{exp_fst_part_ibpf_a_1_unconst}
\begin{split}
E^{1}_{a} (\partial_{h} \Phi (X) ) + E^{1}_{a} (\langle h '' , X \rangle \, \Phi(X) ) = 
\frac{1}{4} \int_{0}^{1} h_{r} \, \frac{{\rm d}^{2}}{{\rm d} b^{2}} \, \Sigma^{1,r}_{a} (\Phi(X) \, | \, b)  \biggr\rvert_{b=0}  \d r. 
\end{split}
\end{equation}
\end{thm}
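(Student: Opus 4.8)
The plan is to establish the unconstrained formulae directly; the bridge formulae of Theorem \ref{thm_ibpf_positive_cond} would then follow by conditioning on $X_1$. By bilinearity of both sides in $\Phi$ it suffices to treat a single generator $\Phi(X)=\exp(-\langle m,X^2\rangle)$ of $S$, with associated $\phi$ solving \eqref{phi} and $\varrho$ given by \eqref{definition_varrho}. Since $\Phi(X+\epsilon h)=\exp(-\langle m,X^2\rangle-2\epsilon\langle m,hX\rangle-\epsilon^2\langle m,h^2\rangle)$, one has $\partial_h\Phi(X)=-2\langle m,hX\rangle\,\Phi(X)$; hence, writing $G(r):=E^\delta_a[X_r\,\Phi(X)]$ and performing two integrations by parts in $r$ (boundary terms vanishing as $h\in C^2_c(0,1)$),
\[
E^\delta_a(\partial_h\Phi(X))+E^\delta_a(\langle h'',X\rangle\,\Phi(X))=\int_0^1 h_r\,\big(G''(\d r)-2\,G(r)\,m(\d r)\big),
\]
the derivative $G''$ being taken in the distributional sense. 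From the definition \eqref{Sigma_unconst} of $\Sigma^{\delta,r}_a$ one gets $E^\delta_a[g(X_r)\Phi(X)]=\int_0^\infty g(b)\,\Sigma^{\delta,r}_a(\Phi(X)\mid b)\,b^{\delta-1}\,\d b$, which together with Lemma \ref{lap_cond_bridge_uncond} gives the closed form $G(r)=c_0\,\phi_r\,J(\varrho_r)$, where $c_0:=\exp(\tfrac{a^2}{2}\phi'_0)\,\phi_1^{\delta/2}$ is independent of $r$ and $J(\varrho):=E^\delta_{a^2}[\sqrt{X_\varrho}]=\int_0^\infty\sqrt{c}\;q^\delta_\varrho(a^2,c)\,\d c$.

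The decisive simplification comes from \eqref{phi}: using $\varrho_r'=\phi_r^{-2}$ and $\phi''(\d r)=2\phi_r\,m(\d r)$ on $(0,1)$, a direct differentiation of $G=c_0\,\phi_r\,J(\varrho_r)$ shows that the two cross terms cancel and that the singular ($m$-)part of $G''(\d r)$ is exactly $2G(r)\,m(\d r)$, so that
\[
G''(\d r)-2\,G(r)\,m(\d r)=c_0\,\phi_r^{-3}\,J''(\varrho_r)\,\d r,
\]
an absolutely continuous measure. Next I would evaluate $J''$ using the Fokker--Planck equation \eqref{pde_densities_only_adjoint} for the squared-Bessel densities, namely $\partial_\varrho q^\delta_\varrho(a^2,c)=2c\,\partial_c^2 q^\delta_\varrho+(4-\delta)\,\partial_c q^\delta_\varrho$. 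Differentiating $J$ under the integral sign and integrating twice by parts in $c$ (boundary contributions vanishing for $\delta$ large, where all moments converge) yields $J'(\varrho)=\tfrac{\delta-1}{2}\int_0^\infty c^{-1/2}q^\delta_\varrho(a^2,c)\,\d c$ and then $J''(\varrho)=-\kappa(\delta)\int_0^\infty c^{-3/2}q^\delta_\varrho(a^2,c)\,\d c$; the factor $\kappa(\delta)=\tfrac{(\delta-1)(\delta-3)}{4}$ is produced precisely by the two successive applications of $2c\,\partial_c^2+(4-\delta)\partial_c$.

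On the other side of \eqref{exp_fst_part_ibpf_a_unconst}, the substitution $c=b^2$ turns the $b$-integral into a pairing against $\mu_{(\delta-3)/2}$ in the variable $c$: writing $\Sigma^{\delta,r}_a(\Phi\mid b)=2c_0\,\phi_r^{-2}\,\hat F_r(b^2)$ with $\hat F_r(c):=q^\delta_{\varrho_r}(a^2,c/\phi_r^2)\,c^{(2-\delta)/2}$ smooth and even, one obtains $\int_0^\infty b^{\delta-4}\,\mathcal{T}^{\,2k}_b\Sigma^{\delta,r}_a(\Phi\mid\cdot)\,\d b=c_0\,\phi_r^{-2}\,\Gamma(\tfrac{\delta-3}{2})\,\langle\mu_{(\delta-3)/2},\hat F_r\rangle$, and a scaling in $c$ identifies $\Gamma(\tfrac{\delta-3}{2})\langle\mu_{(\delta-3)/2},\hat F_r\rangle$ with $\phi_r^{-1}\int_0^\infty c^{-3/2}q^\delta_{\varrho_r}(a^2,c)\,\d c$. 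Comparing with the expression for $J''(\varrho_r)$ proves \eqref{exp_fst_part_ibpf_a_unconst} for $\delta>3$. To reach all $\delta\in(0,\infty)$ I would invoke analyticity: for fixed $m,h,a$ both sides are analytic in $\delta$ (the densities $q^\delta$ and the moment $J$ depend analytically on $\delta$, and $\alpha\mapsto\langle\mu_\alpha,\varphi\rangle$ is analytic by Section \ref{sect_prelude}), so the identity extends to $\delta\in(0,\infty)\setminus\{1,3\}$. Finally, \eqref{exp_fst_part_ibpf_a_3_unconst} and \eqref{exp_fst_part_ibpf_a_1_unconst} follow by letting $\delta\to 3,1$, where $\kappa(\delta)\to0$ while $\Gamma(\tfrac{\delta-3}{2})$ has a simple pole; the finite limit of $-\kappa(\delta)\Gamma(\tfrac{\delta-3}{2})$, together with $\mu_0=\delta_0$, respectively $\langle\mu_{-1},\varphi\rangle=-\varphi'(0)$, reproduces the two special formulae.

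The main obstacle is the passage to $\delta\le 3$. There the moment $\int_0^\infty c^{-3/2}q^\delta_\varrho\,\d c$ diverges at $c=0$ and the integrations by parts defining $J''$ pick up nonvanishing boundary terms, so the bare identity $J''=-\kappa(\delta)\int c^{-3/2}q^\delta$ fails and must be replaced by its renormalised counterpart $\Gamma(\tfrac{\delta-3}{2})\langle\mu_{(\delta-3)/2},\hat F_r\rangle$. Making the analytic continuation rigorous---checking that the renormalised pairing is indeed the correct analytic extension and that no spurious boundary term survives---is the delicate point, and is exactly where the $\mu_\alpha$-calculus of Proposition \ref{thm_ibpf_mu} and Lemma \ref{lemma_mu} performs the required bookkeeping, ensuring at the same time (via \eqref{vanishing_derivative_sigma}) that the apparent singularity at $\delta=2$ is harmless.
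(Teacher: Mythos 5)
Your overall architecture coincides with the paper's: the reduction to generators of $S$, the computation of the left-hand side, the identity $G(r)=c_0\,\phi_r\,J(\varrho_r)$ from Lemma \ref{lap_cond_bridge_uncond}, the cancellation giving $G''(\d r)-2G(r)\,m(\d r)=c_0\,\phi_r^{-3}J''(\varrho_r)\,\d r$, and the rewriting of the right-hand side as a pairing with $\mu_{(\delta-3)/2}$ after the substitution $c=b^2$ are all exactly the paper's steps, and your classical integration-by-parts computation of $J'$ and $J''$ for $\delta>3$ is correct. Where you diverge is in how the core identity is established below $\delta=3$: the paper proves it (Lemma \ref{unconstrained_secondder}) directly for \emph{every fixed} $\delta>0$, by applying the Fokker--Planck identity \eqref{pde_densities_only_adjoint} together with Proposition \ref{thm_ibpf_mu} and Lemma \ref{lemma_mu} --- identities valid for each fixed test function and fixed $\alpha$ --- so that no analytic continuation in $\delta$ is ever needed; you instead prove the identity only on $(3,\infty)$ and assert that both sides are analytic in $\delta$.

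That assertion is the gap, and it is precisely the step carrying the new content of the theorem: the regime $\delta<3$ (the case $\delta>3$ is classical). Two things are missing. First, analyticity in $\delta$ of the left-hand side, i.e.\ of $\delta\mapsto J''_\delta(t)$: this requires differentiating $\int_0^\infty\sqrt{y}\,q^\delta_t(a^2,y)\,\d y$ twice in $t$ and once in (complexified) $\delta$, with bounds on $I_\nu$ locally uniform in the complex index $\nu=\delta/2-1$; plausible, but nowhere addressed. Second, and more seriously, the claim that the Taylor-renormalised expression in \eqref{exp_fst_part_ibpf_a_unconst} --- whose subtraction order $k$ jumps at $\delta=1$ --- is the analytic continuation in $\delta$ of the $\delta>3$ integral. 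The Gelfand--Shilov fact that $\alpha\mapsto\langle\mu_\alpha,\varphi\rangle$ is analytic holds for a \emph{fixed} test function $\varphi$; here the test function $y\mapsto q^\delta_{\varrho_r}(a^2,y)\,y^{1-\delta/2}$ itself moves with $\delta$, so the continuation-and-gluing argument must be redone with parameter-dependent test functions and suitable domination; moreover $(0,\infty)\setminus\{1,3\}$ is disconnected, so the continuation (and your limiting treatment of $\delta=1,3$) must pass through the poles via the unified pairing, which again rests on the same unproven joint analyticity. Your closing appeal to Proposition \ref{thm_ibpf_mu} and Lemma \ref{lemma_mu} as ``performing the required bookkeeping'' is telling: those lemmas are never actually deployed in your argument, and if one does deploy them --- applying them twice to \eqref{pde_densities_only_adjoint}, as the paper does --- one gets the identity pointwise in $\delta$ and the continuation becomes unnecessary. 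As written, your proposal proves the theorem only for $\delta>3$.
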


\begin{proof}
By linearity, it suffices to prove the formulae \eqref{exp_fst_part_ibpf_a_unconst},\eqref{exp_fst_part_ibpf_a_3_unconst} and \eqref{exp_fst_part_ibpf_a_1_unconst} for $\Phi$ of the form $\eqref{exp_functional}$. So let $m$ be a finite Borel measure on $[0,1]$, and let $\Phi$ be the functional thereto associated. We start by computing the left-hand side of the above claimed formulae. We have
\begin{align*}
& E^{\delta}_{a} (\partial_{h} \Phi (X) ) + E^{\delta}_{a} (\langle h '' , X \rangle \, \Phi(X) ) = 
E^{\delta}_{a} \left( \langle h '' - 2 h m, X \rangle  \Phi(X) \right) \\
&=  \int_{0}^{1} \left( \d r \, h_{r}  \frac{ \d^{2}}{\d r^{2}} - 2 m(\d r) \, h_r \right) E^{\delta}_{a} [X_{r} \exp \left( - \langle m, X^{2} \rangle \right) ].
\end{align*}
We now claim that, for all $r \in (0,1)$,
\begin{equation}\label{thusobtain_unconst}
 E^{\delta}_{a} [X_{r} \exp( - \langle m, X^{2} \rangle) ] = \exp \left( \frac{a^{2}}{2} \phi'(0) \right) \phi_{1}^{\delta/2} \, \phi_r \, E^{\delta}_{a} \left( {X_{\varrho_r}} \right),
\end{equation}
where $\varrho$ is defined by \eqref{definition_varrho}. Indeed, we have
\begin{align*}
E^{\delta}_{a} [X_{r} \exp( - \langle m, X^{2} \rangle) ] = Q^{\delta}_{a^{2}} \left[\sqrt{X_{r}} \exp( - \langle m, X \rangle) \right].
\end{align*}
But, by Lemma 3.3 of \cite{EladAltman2019}, the quantity in the right-hand side equals
\[\exp \left( \frac{a^{2}}{2} \phi'(0) \right) \phi_{1}^{\delta/2} \, \phi_r \, Q^{\delta}_{a^{2}} \left( \sqrt{X_{\varrho_r}} \right), \]
and equality \eqref{thusobtain_unconst} follows. To alleviate notations, we rewrite \eqref{thusobtain_unconst} as follows:
\[E^{\delta}_{a} [X_{r} \exp( - \langle m, X^{2} \rangle) ] = K(a, m) \, \phi_r \, \zeta_{\varrho_r},\]
 where 
\[
K(a,m):=\exp \left( \frac{a^{2}}{2} \phi'(0) \right) \phi_{1}^{\delta/2} 
\] 
is a constant which does not depend on $r$ and
\[
\zeta_t := Q^{\delta}_{a^{2}} \left( \sqrt{X_t} \right) = E^{\delta}_{a}  \left(X_t \right), \quad t \geq 0.
\]
To compute the left-hand sides of \eqref{exp_fst_part_ibpf_a_unconst},\eqref{exp_fst_part_ibpf_a_3_unconst} and \eqref{exp_fst_part_ibpf_a_1_unconst}, it therefore suffices to compute the following distribution on $(0,1)$:
\[ \left( \frac{ \d^{2}}{\d r^{2}} - 2 \, m({\rm{d}} r) \right) \left(\phi_r  \zeta_{\varrho_r} \right). \]  
We recall that by \eqref{definition_varrho}
\[ \varrho'_{r}= \phi_r^{-2}. \]
By the Leibniz formula, we obtain
\[ \begin{split}
\frac{ \rm d}{{\rm d} r} \left(\phi_r \zeta_{\varrho_r}\right) = \phi_r' \zeta_{\varrho_r} + \phi_r  \frac{\zeta_{\varrho_r}'}{ \phi_r^{2}} = 
\phi_r' \zeta_{\varrho_r} +  \frac{\zeta_{\varrho_r}'}{\phi_r},
\end{split}\]
\[ \begin{split}
\frac{ \d^{2}}{\d r^{2}} \left(\phi_r \zeta_{\varrho_r}\right) = \phi_r'' \zeta_{\varrho_r} +
\phi_r'  \frac{\zeta_{\varrho_r}'}{\phi_r^2}- 
\phi_r'  \frac{\zeta_{\varrho_r}'}{\phi_r^2} + 
\frac{\zeta_{\varrho_r}''}{\phi_r^3} = \phi_r'' \zeta_{\varrho_r} +
\frac{\zeta_{\varrho_r}''}{\phi_r^3}.
\end{split}\]
Consequently, recalling that $\phi'' = 2 \phi \, m$, we obtain 
\[ 
\left( \frac{ \d^{2}}{\d r^{2}} - 2 \, m({\rm{d}} r) \right) \left(\phi_r \zeta_{\varrho_r} \right) = \frac{\zeta_{\varrho_r}''}{\phi_r^3}. 
\]
Finally, we thus obtain the following expression for the left-hand sides of \eqref{exp_fst_part_ibpf_a_unconst},  \eqref{exp_fst_part_ibpf_a_3_unconst} and \eqref{exp_fst_part_ibpf_a_1_unconst}:
\begin{equation}
\label{left_hand_side_ibpf_unconst}
 \begin{split}
& E^{\delta}_{a} (\partial_{h} \Phi (X) ) + E^{\delta}_{a} (\langle h '' , X \rangle \, \Phi(X) ) \\
& = K(a,m) \, 
\int_{0}^{1} \d r \, h_{r} \, \phi_r^{-3} \, \frac{\d^{2}}{\d t^{2}} E^{\delta}_{a}  \left( {X_{t}} \right) \biggr \rvert_{t=\varrho_r}.  
\end{split}
\end{equation}
We now compute the right-hand  sides of \eqref{exp_fst_part_ibpf_a_unconst},  \eqref{exp_fst_part_ibpf_a_3_unconst} and \eqref{exp_fst_part_ibpf_a_1_unconst}. Recall that, by \eqref{general_bridg2_uncond}, we have for all $r \in (0,1)$ and $b \geq 0$
\begin{equation*}
\begin{split}
\Sigma^{\delta,r}_{a}(\Phi (X) \,|\, b\,) &=
 2 \exp \left( \frac{a^{2}}{2} \phi'_{0} \right)  \phi_1^{\delta/2} \phi_r^{-2} \frac{q^{\delta}_{\varrho_r} \left(a^{2},\frac{b^{2}}{\phi_r^{2}}\right)}{b^{\delta-2}} \\
&= 2 K(a,m) \, \phi_{r}^{-2} \,   \frac{q^{\delta}_{\varrho_r} \left(a^{2},\frac{b^{2}}{\phi_r^{2}}\right)}{b^{\delta-2}}.
\end{split}
\end{equation*}
Therefore, setting $t:= \varrho_r$ and denoting by $f$ the function defined by
\[ 
f(y) :=   \frac{q^{\delta}_{t}(a^{2},y)}{y^{\delta/2-1}}, \quad y > 0, 
\]
and extended by continuity at $y=0$, we have
\begin{equation}
\label{formula_sigma_ab_unconst}
\Sigma^{\delta,r}_{a}(\Phi (X) \,|\, b\,) = 2 K(a, m) \, \phi_r^{-\delta} \, f \left(\frac{b^{2}}{ \phi_r^{2}} \right),  
\end{equation}
for all $b \geq 0$. Now, we first assume that $\delta \notin \{1,3\}$, and compute the right-hand side of \eqref{exp_fst_part_ibpf_a_unconst}. Note that, by \eqref{formula_sigma_ab_unconst}, and performing the change of variable $y:= \frac{b^{2}}{\phi_r^{2}}$, we obtain
\begin{equation}
\label{integral_unconst}
\begin{split}
\int_0^\infty \d b \ b^{\delta-4} \Big[ \mathcal{T}^{\,-2k}_{b} \, \Sigma^{\delta,r}_{a}(\Phi (X) \,|\, \cdot\,) \Big] = K(a,m) \, \phi_r^{-3} \int_0^\infty \d y \ y^{\frac{\delta-3}{2}-1} \, \mathcal{T}^{\,-k}_{y} f,
\end{split}
\end{equation}
where we recall that $k:=\lfloor \frac{3-\delta}{2} \rfloor \leq 1$. Recalling also the definition of $\mu_{\frac{\delta-3}{2}}$, we can rewrite the last integral of \eqref{integral_unconst} as
\begin{equation*}
 \Gamma \left( \frac{\delta-3}{2} \right) \left\langle \mu_{\frac{\delta-3}{2}} (y), f(y) \right\rangle.
\end{equation*}
Since $\Gamma \left( \frac{\delta+1}{2} \right) = \kappa(\delta) \, \Gamma \left( \frac{\delta-3}{2} \right)$, we thus deduce that the integrand in the right-hand side of \eqref{exp_fst_part_ibpf_a_unconst} equals
\begin{equation}
\label{right_hand_side_ibpf_unconst}
- K(a, m) \, \Gamma \left( \frac{\delta+1}{2} \right) \, h_r \, \phi_r^{-3} \, \left\langle \mu_{\frac{\delta-3}{2}} (y), f(y) \right\rangle.
\end{equation}
Supposing now that $\delta=3$, by the expression \eqref{formula_sigma_ab_unconst}, we see that the integrand in right-hand side of \eqref{exp_fst_part_ibpf_a_3_unconst} equals
\[
- K(a, m) \, h_r \, \phi_r^{-3} \, f(0),
\]
which coincides with the quantity \eqref{right_hand_side_ibpf_unconst} for $\delta=3$. Finally, supposing that $\delta=1$, by \eqref{formula_sigma_ab_unconst}, we see that the integrand in the right-hand side of \eqref{exp_fst_part_ibpf_a_1_unconst} equals
\[
 K(a, m) \, h_r \, \phi_r^{-3} \, f'(0),
\]
which also coincides with the quantity \eqref{right_hand_side_ibpf_unconst} with $\delta=1$. In conclusion, comparing the expressions \eqref{left_hand_side_ibpf_unconst} and \eqref{right_hand_side_ibpf_unconst}, we see that the claimed IbPF then follows as a consequence of the following result, the proof of which is postponed to the Appendix \ref{Proofs_ibpf}:

\begin{lm}\label{unconstrained_secondder}
For all $t > 0$ and $a \geq 0$, we have
\begin{equation*}
\frac{\d^{2}}{\d t^{2}} E^{\delta}_{a} \left( {X_{t}} \right) = -\Gamma \left( \frac{\delta+1}{2} \right) \left\langle \mu_{\frac{\delta-3}{2}}(y) \, , \, \frac{q^{\delta}_{t}(a^2,y)}{y^{\delta/2-1}} \right\rangle.
\end{equation*}
\end{lm}

\end{proof}

\subsection{The case of bridges}

Now we finally prove the IbPF associated with Bessel bridges stated in Theorem \ref{thm_ibpf_positive_cond}. This will follow from Theorem \ref{thm_ibpf_positive_cond_unconst} by simply conditioning on the value of $X_{1}$.

\begin{proof}[Proof of Theorem \ref{thm_ibpf_positive_cond}]
Let $\Phi \in S$ and $h \in C^{2}_{c}(0,1)$. Then, for any $\lambda \geq 0$, we consider the functional $\Psi: C([0,1])\to \mathbb{R}$ defined as
\[ \Psi(X) := \Phi(X) \, e^{-\lambda X_{1}^{2}}, \quad X \in C([0,1]). \]
Note that $\Psi$ is an element of $S$, since one can write $X_{1}^{2} := \int_0^1 X_t^2 \d m(X)$, where $m := \delta_{1}$ is the Dirac measure at $1$. Therefore, $\Psi$ satisfies the IbPFs stated in Theorem \ref{thm_ibpf_positive_cond_unconst}. Moreover, since $h_{1}=0$, we have
\[ \forall X \in C([0,1]), \quad \partial_{h} \Psi(X) = \partial_{h} \Phi(X) e^{- \lambda X_{1}^{2}}. \]
Therefore, assuming for example that $\delta \notin \{ 1, 3 \}$, we have
\begin{equation}
\label{equality_with_exp}
\begin{split}
& E^{\delta}_{a} (\partial_{h} \Phi (X) e^{- \lambda X_{1}^{2}}) + E^{\delta}_{a} (\langle h '' , X \rangle \, \Phi(X) e^{- \lambda X_{1}^{2}})= 
 \\ &-\kappa(\delta)\int_{0}^{1}  
 h_{r}  \int_0^\infty b^{\delta-4} \Big[ \mathcal{T}^{\,2k}_{b} \, \Sigma^{\delta,r}_{a}(\Phi (X) e^{- \lambda X_{1}^{2}} \,|\, \cdot\,) \Big]
  \d b \d r.
\end{split}
\end{equation}
By conditioning on the value of $X_{1}$, we can rewrite the left-hand side of this equality as 
\[ \int_{0}^{\infty} p^{\delta}_{1}(a,a') \,  e^{-\lambda a'^{2}} \left( E^{\delta}_{a,a'} (\partial_{h} \Phi (X)) + E^{\delta}_{a,a'} (\langle h '' , X \rangle \, \Phi(X)) \right) \d a'. \]
On the other hand, for all $r \in (0,1)$ and $b \geq 0$, we have, by the same type of conditioning
 \[ E^{\delta}_{a}(\Phi (X) e^{- \lambda X_{1}^{2}} \, | \, X_{r} = b ) = \int_{0}^{\infty} p^{\delta}_{1-r}(b,a') \, e^{-\lambda a'^{2}} \,  E^{\delta}_{a,a'} (\Phi (X) | X_{r}=b) \d a', \] 
whence we deduce that
\[ \Sigma^{\delta,r}_{a}(\Phi (X) e^{- \lambda X_{1}^{2}} \,|\, b \,) = \int_{0}^{\infty} p^{\delta}_{1}(a,a') e^{-\lambda a'^{2}} \, \Sigma^{\delta,r}_{a,a'} ( \Phi (X) \,|\, b \,) \d a'. \] 
Consequently, the relation \eqref{equality_with_exp} above can be rewritten 
\[ \begin{split}
& \int_{0}^{\infty} p^{\delta}_{1}(a,a')  \, e^{-\lambda a'^{2}} \left( E^{\delta}_{a,a'} (\partial_{h} \Phi (X)) + E^{\delta}_{a,a'} (\langle h '' , X \rangle \, \Phi(X)) \right) \d a' = \\
& - \kappa(\delta) \int_{0}^{\infty}  p^{\delta}_{1}(a,a') \, e^{-\lambda a'^{2}} \left( \int_{0}^{1} h_{r} \int_{0}^{\infty} b^{\delta-4} \Big[ \mathcal{T}^{\,2k}_{b}\, \Sigma^{\delta,r}_{a,a'} ( \Phi (X) \,|\, \cdot \,) \Big] \d b \d r \right) \d a'.
\end{split}\]
Note that this equality holds for any $\lambda \geq 0$. Hence the functions 
\[ x \mapsto \frac{p^\delta_1(a,\sqrt{x})}{\sqrt{x}} \left( E^{\delta}_{a,\sqrt{x}} (\partial_{h} \Phi (X)) + E^{\delta}_{a,\sqrt{x}} (\langle h '' , X \rangle \, \Phi(X)) \right) \]
and
\[x \mapsto - \kappa(\delta) \, \frac{p^\delta_1(a,\sqrt{x})}{\sqrt{x}} \left( \int_{0}^{1} h_{r} \int_{0}^{\infty} b^{\delta-4} \Big[ \mathcal{T}^{\,2k}_{b}\, \Sigma^{\delta,r}_{a,\sqrt{x}} ( \Phi (X) \,|\, \cdot \,) \Big] \d b \d r \right) \]
have the same Laplace transform. Since they are continuous on $(0,\infty)$, they must coincide. This yields the claimed IbPF for Bessel bridges of dimension $\delta \notin \{ 1,3 \}$. The cases $\delta \in \{ 1,3 \}$ are treated in the same way.
\end{proof}

\section{The dynamics via Dirichlet forms for $\delta=2$}

\label{sect_dynamics}

The IbPFs obtained above, which complete the results already obtained in \cite{EladAltman2019}, bear out the conjectures \eqref{1<spde<3}, \eqref{spde=1} and \eqref{0<spde<1} above for the structure of the gradient dynamics associated with the laws of Bessel bridges of dimension smaller than 3. However, as stressed in the introduction and in Section 6 of \cite{EladAltman2019}, we are still far from being able to solve such equations. However, in Section 5 of \cite{EladAltman2019}, a solution to a weak form of \eqref{spde=1}, the $1$-Bessel SPDE with homogeneous Dirichlet boundary conditions, was constructed using Dirichlet form techniques. 

In this section we go one step further by treating the case $\delta=2$. In words, we exploit our IbPFs to construct a weak version of the gradient dynamics associated with the law of a $2$-dimensional Bessel bridge from $0$ to $0$ over $[0,1]$, using the theory of Dirichlet forms. 
The reason for considering this particular Bessel bridge is that for integer values of $\delta$, and for zero boundary conditions, we can exploit a representation of the $\delta$-dimensional Bessel bridge in terms of a Brownian bridge, for which the corresponding gradient dynamics is well-known and corresponds to a linear stochastic heat equation. 
This representation allows us to construct a quasi-regular Dirichlet form associated with $P^2:=P^2_{0,0}$, a construction which does not follow from the IbPF \eqref{exp_fst_part_ibpf_a_b} due to the distributional character of its last term. The IbPF \eqref{exp_fst_part_ibpf_a_b} is then exploited to prove that the associated Markov process satisfies \eqref{formal2}, in a certain sense to be made precise below.
The proofs will follow closely those of Section 5 of \cite{EladAltman2019} which treated the case $\delta=1$. 
\subsection{The $2$-dimensional random string}

Consider the space $\mathbb{H}_{2} := L^{2}([0,1], \mathbb{R}^{2})$ endowed with the component-wise $L^{2}$ product. Let $\mu_{2}$ denote the law, on $\mathbb{H}_{2}$, of a two-dimensional Brownian bridge from $0$ to $0$. We shall use the shorthand notation $L^{2} (\mu_2)$ for the space $L^2(\mathbb{H}_{2},\mu_2)$. Consider moreover the semigroup $(\mathbf{Q}^{2}_{t})_{t \geq 0}$ on $\mathbb{H}_{2}$ defined, for all $F \in L^{2} (\mathbb{H}_{2},\mu_{2})$, and $z =(z_{1},z_{2}) \in \mathbb{H}_{2}$, by
\[ \mathbf{Q}^{2}_{t} F (z) := \mathbb{E} \left[ F(v_{t}(z)) \right], \quad t \geq 0, \]
where $(v_{t}(z))_{t \geq 0}$ is the solution to the $2$-dimensional stochastic heat equation with initial condition $z$ and with homogeneous Dirichlet boundary conditions
\[ \begin{split}
\begin{cases}
\frac{\partial v}{\partial t} = \frac{1}{2} \frac{\partial^{2} v}{\partial x^{2}} + \xi \\
v(0,x) = z(x), \qquad x\in[0,1], \\
v(t,0)= v(t,1)= 0,  
\end{cases}
\end{split}\]
where $\xi := (\xi_{1}, \xi_{2})$, with $\xi_{1}, \xi_{2}$ two independent space-time white noises on $\mathbb{R}_{+} \times [0,1]$. More precisely, let $(g_t(x,y))_{t \geq 0, x,x' \in (0,1)}$ be the fundamental solution of the heat equation on $[0,1]$ with homogeneous Dirichlet boundary conditions,  which by definition is the unique solution to
\[ \begin{split}
\begin{cases}
\frac{\partial g}{\partial t} = \frac{1}{2} \frac{\partial^{2} g}{\partial x^{2}} \\
g_{0}(x,x') = \delta_{x}(x') \\
g_{t}(x,0)= g_{t}(x,1)= 0.  
\end{cases}
\end{split}\]
Recall that $g$ can be represented as follows:
\begin{equation} 
\label{representation_green_function}
\forall t >0, \quad \forall x, x' \geq 0, \quad g_{t}(x,x') = \sum_{k=1}^{\infty} e^{-\frac{\lambda_{k}}{2}t} e_{k}(x) e_{k}(x'), 
\end{equation}
where $(e_{k})_{k \geq 1}$ is the complete orthornormal system of $H$ given by
\[e_{k}(x) := \sqrt{2} \sin(k \pi x), \quad x \in [0,1], \quad k \geq 1\] 
and $\lambda_{k} := k^{2} \pi^{2}$, $k \geq 1$. The process $v$ above can then be written as follows:
\[ 
v(t,x) = (v_{1}(t,x), v_{2}(t,x)), \quad t \geq 0, \, \, x \in [0,1],\]
where, for $i=1,2$
\[ 
v_{i}(t,x) = z_{i}(t,x) + \int_{0}^{t} \int_{0}^{1} g_{t-s}(x,x') \, \xi_{i} ({\rm d} s, \d x'), \]
with $z_{i}(t,x) := \int_{0}^{1} g_{t}(x,x') z_{i}(x') \d x'$, and where the integral above is a stochastic convolution. In words, $v$ is the vector composed of two independent copies of a solution to the one-dimensional stochastic heat equation, with respective intial data $z_{1}$ and $z_{2}$. In particular, it follows from this fomula that $v$ is a Gaussian process. An important role will be played by its covariance function. Namely, for all $t \geq 0$ and $x,x' \in (0,1)$, we set
\[ q_{t}(x,x') := \text{Cov}(v_1 (t,x) , v_1 (t,x')) = \text{Cov}(v_2 (t,x) , v_2 (t,x')) = \int_{0}^{t} g_{2 \tau}(x,x') \d \tau, \]
We also set
\[  q_{\infty}(x,x') := \int_{0}^{\infty} g_{2 \tau}(x,x') \d \tau=\mathbb{E}[\beta_x\beta_{x'}]
= x \wedge x' - x x' . \]
For all $t \geq 0$, we moreover define
\[ q^{t}(x,x') := q_{\infty}(x,x') - q_{t}(x,x') = \int_{t}^{\infty} g_{2 \tau}(x,x') \d \tau.\]
When $x=x'$, we will use the shorthand notations $q_{t}(x), q_{\infty}(x)$ and $q^{t}(x)$ instead of $q_{t}(x,x), q_{\infty}(x,x)$ and $q^{t}(x,x)$ respectively. We denote by $(\Lambda^2,D(\Lambda^2))$ the Dirichlet form generated by $(\mathbf{Q}^{2}_{t})_{t \geq 0}$ in $L^{2} (\mathbb{H}_{2},\mu_{2})$, which is given by
\[ \Lambda^2(F,G) = \frac{1}{2} \int_{\mathbb{H}_{2}} \langle \overline {\nabla} F, \overline {\nabla} G \rangle_{\mathbb{H}_{2}} \d \mu_{2}, \quad F,G \in D(\Lambda^2) = W^{1,2}(\mu_{2}),
\]
where $W^{1,2}(\mu_{2}) \subset L^{2} (\mathbb{H}_{2},\mu_{2})$ is the Sobolev space associated with $\mu_2$ and, for all $F \in W^{1,2}(\mu_{2})$, $\overline {\nabla} F : \mathbb{H}_{2} \to \mathbb{H}_{2} $ denotes the gradient of $F$ in $\mathbb{H}_{2}$,  see Section 9 in \cite{dpz3}.

\subsection{Gradient Dirichlet form associated with the 2-dimensional Bessel bridge}


As in Section 5 of \cite{EladAltman2019}, we set $H := L^{2}(0,1)$ and denote by $\langle \cdot, \cdot \rangle$ the $L^2$ inner product on $H$:
\[ \langle f, g \rangle := \int_0^1 f_r \, g_r \, \d r, \quad f,g \in H. \]
We denote by $\| \cdot \|$ the corresponding norm on $H$. We also consider the closed subset $K \subset H$ of nonnegative functions:
\[K:= \{ z \in H, \, \, z \geq 0 \, \, \text{a.e.} \}. \] 
Recall that $K$ is a Polish space. We further denote by $\nu_2$ the law, on $K$, of a $2$-dimensional Bessel bridge from $0$ to $0$ on $[0,1]$ (so that $P^2$ is then the restriction of $\nu_2$ to $C([0,1])$). We shall use the shorthand $L^2(\nu)$ to denote the space $L^2(K,\nu)$. Let $\mathcal{F} \mathcal{C}^{\infty}_{b}(H)$ denote the space of all functionals $F:H \to \mathbb{R}$ of the form
\[ F (z) = \psi(\langle l_{1}, z \rangle, \ldots, \langle l_{m}, z \rangle ), \quad z \in H, \] 
with $m \in \mathbb{N}$, $\psi \in C^{\infty}_{b}(\mathbb{R}^{m})$, and $l_{1}, \ldots, l_{m} \in \text{Span} \{ e_{k}, k \geq 1 \}$.  
We also define:
\[\mathcal{F} \mathcal{C}^{\infty}_{b}(K) := \{ F \big \rvert_{K} \, , \quad F \in \mathcal{F} \mathcal{C}^{\infty}_{b}(H) \}. 
\]
Moreover, for $f \in \mathcal{F} \mathcal{C}^{\infty}_{b}(K)$ of the form $f=F \big \rvert_{K}$, with $F \in \mathcal{F} \mathcal{C}^{\infty}_{b}(H)$, we define $\nabla f : K \to H$ by
\[ \nabla f (z) = \nabla F(z), \quad z \in K, \]
where this definition does not depend on the choice of $F\in \mathcal{F} \mathcal{C}^{\infty}_{b}(H)$ such that $f=F\big \rvert_{K}$.
We denote by $\mathcal{E}^2$ the bilinear form defined on $\mathcal{F} \mathcal{C}^{\infty}_{b}(K)$ by
\[ \forall f,g \in \mathcal{F} \mathcal{C}^{\infty}_{b}(K), \quad \mathcal{E}^2(f,g) := \frac{1}{2} \int_K \langle \nabla f , \nabla g \rangle \d \nu_{2}. 
\]
Finally, we denote by $j_{2} : \mathbb{H}_{2} \to K$ the map
\begin{equation}
\label{euclidean_norm_map}
j_{2}(z) := \|z\|=\sqrt{(z_{1})^{2} + (z_{2})^{2}}, \quad z=(z_{1},z_{2}) \in \mathbb{H}_{2}.
\end{equation}
Note that 
\begin{equation}
\label{image_measure}
\nu_2 = \mu_2 \circ j_{2}^{-1},
\end{equation}
so that the map
\[\begin{cases}
L^{2}(\nu_2) \to L^{2}(\mu_2) \\
\varphi \mapsto \varphi \circ j_2
\end{cases} \]
is an isometry. The following proposition can then be proven similarly as Theorem 5.1.3 in \cite{vosshallthesis} or Prop. 5.1 in \cite{EladAltman2019}: 
\begin{prop}
\label{closability_2}
The form $(\mathcal{E}^2,\mathcal{F} \mathcal{C}^{\infty}_{b}(K))$ is closable. Its closure $(\mathcal{E}^2,D (\mathcal{E}^2))$ is a local, quasi-regular Dirichlet form on $L^{2}(\nu_2)$. Moreover, for all $f \in D (\mathcal{E}^2)$, $f \circ j_2 \in D(\Lambda^2)$, and we have:
\begin{equation}
\label{isometry}
\forall f,g \in D(\mathcal{E}^2), \quad \mathcal{E}^2(f,g) = \Lambda^2(f \circ j_2, g \circ j_2) 
\end{equation}
\end{prop}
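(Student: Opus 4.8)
The plan is to realise $(\mathcal{E}^2,\mathcal{F}\mathcal{C}^\infty_b(K))$ as an image, through the Euclidean norm map $j_2$, of the Gaussian Ornstein--Uhlenbeck form $(\Lambda^2,D(\Lambda^2))$, and to transport every required property along the $L^2$-isometry $U:L^2(\nu_2)\to L^2(\mu_2)$, $U\varphi:=\varphi\circ j_2$, which is isometric by \eqref{image_measure}. The computational heart is a chain rule: for $f\in\mathcal{F}\mathcal{C}^\infty_b(K)$ and $\mu_2$-a.e. $z=(z_1,z_2)\in\mathbb{H}_2$ (namely those $z$ whose zero set $\{x:z(x)=0\}$ is Lebesgue-negligible, a $\mu_2$-full set since the planar Brownian bridge vanishes only on a negligible set of times),
\[ \overline{\nabla}(f\circ j_2)(z)=\left(\frac{z_1}{j_2(z)}\,(\nabla f)(j_2(z))\,,\;\frac{z_2}{j_2(z)}\,(\nabla f)(j_2(z))\right), \]
the products being pointwise on $[0,1]$. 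Since $z_1^2+z_2^2=j_2(z)^2$ pointwise, this yields $\|\overline{\nabla}(f\circ j_2)(z)\|_{\mathbb{H}_2}^2=\|(\nabla f)(j_2(z))\|_H^2$, hence, integrating against $\mu_2$ and using $\nu_2=\mu_2\circ j_2^{-1}$,
\[ \Lambda^2(f\circ j_2,g\circ j_2)=\mathcal{E}^2(f,g),\qquad f,g\in\mathcal{F}\mathcal{C}^\infty_b(K). \]

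Before using this identity one must check that $f\circ j_2\in W^{1,2}(\mu_2)=D(\Lambda^2)$, which is delicate because $j_2$ is not differentiable where $\|z\|=0$. I would first approximate $j_2$ by the smooth maps $j_2^\varepsilon(z):=\sqrt{(z_1)^2+(z_2)^2+\varepsilon^2}$; each $f\circ j_2^\varepsilon$ plainly lies in $D(\Lambda^2)$, and using dominated convergence together with the $\mu_2$-a.e. negligibility of the zero set, one passes to the limit $\varepsilon\to 0$ both in $L^2(\mu_2)$ and in $\Lambda^2$-norm, obtaining $f\circ j_2\in D(\Lambda^2)$ with the gradient above. (Alternatively one invokes that a Lipschitz function of the Gaussian field $\mu_2$ lies in the Gaussian Sobolev space, noting $z\mapsto\|z\|$ is $1$-Lipschitz from $\mathbb{H}_2$ to $H$.) Closability of $(\mathcal{E}^2,\mathcal{F}\mathcal{C}^\infty_b(K))$ then follows immediately: if $f_n\in\mathcal{F}\mathcal{C}^\infty_b(K)$ satisfies $f_n\to 0$ in $L^2(\nu_2)$ and is $\mathcal{E}^2$-Cauchy, then $g_n:=Uf_n\in D(\Lambda^2)$ satisfies $g_n\to 0$ in $L^2(\mu_2)$ and is $\Lambda^2$-Cauchy, whence $\Lambda^2(g_n)\to 0$ by closedness of $\Lambda^2$, i.e. $\mathcal{E}^2(f_n)\to 0$. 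Approximating an arbitrary $f\in D(\mathcal{E}^2)$ by core elements and invoking closedness of $\Lambda^2$ once more shows that $U$ maps $D(\mathcal{E}^2)$ into $D(\Lambda^2)$ and that $\mathcal{E}^2(f,g)=\Lambda^2(f\circ j_2,g\circ j_2)$ extends to all $f,g\in D(\mathcal{E}^2)$, which is the last assertion of the statement. The Markov (unit-contraction) property and locality of $\mathcal{E}^2$ are then automatic, either from the defining carr\'e du champ structure $\tfrac12|\nabla f|^2$, or by transport from $\Lambda^2$ through $U$, since normal contractions commute with composition by $j_2$.

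The remaining and genuinely delicate point is quasi-regularity, and I expect the main obstacle to lie here; this is where I would follow the argument communicated by R. Zhu and X. Zhu. The form $\Lambda^2$ is quasi-regular, being the classical gradient Dirichlet form of the $\mathbb{R}^2$-valued stochastic heat equation on $(\mathbb{H}_2,\mu_2)$, and it admits a nest $(\tilde F_k)$ of compact sets. The fiberwise action of the compact group $O(2)$ preserves $\mu_2$ and contracts $\Lambda^2_1$, and the $\sigma(j_2)$-measurable, i.e. $O(2)$-invariant, elements of $D(\Lambda^2)$ are exactly those of the form $\varphi\circ j_2$ with $\varphi\in D(\mathcal{E}^2)$. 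Averaging test functions over $O(2)$ (a contraction for $\Lambda^2_1$ by convexity) shows that the $\Lambda^2$-capacity of an $O(2)$-invariant set is attained along invariant test functions; choosing the nest $(\tilde F_k)$ itself $O(2)$-invariant, so that $\tilde F_k=j_2^{-1}(F_k)$ with $F_k:=j_2(\tilde F_k)$, the correspondence $U$ between invariant test functions and $\mathcal{E}^2$-test functions yields the capacity comparison
\[ \mathrm{Cap}_{\mathcal{E}^2}(K\setminus F_k)\leq\mathrm{Cap}_{\Lambda^2}(\mathbb{H}_2\setminus\tilde F_k), \]
whose right-hand side tends to $0$ as $k\to\infty$. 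Hence $F_k=j_2(\tilde F_k)$, compact by continuity of $j_2$, defines an $\mathcal{E}^2$-nest of compacta in $K$. The remaining requirements for quasi-regularity, namely an $\mathcal{E}^2$-dense family of functions admitting quasi-continuous versions and a countable point-separating family, are obtained by pushing the corresponding $\Lambda^2$-objects through the continuous map $j_2$ and using the isometry $U$. Together with closability, the Markov property and locality, this shows that $(\mathcal{E}^2,D(\mathcal{E}^2))$ is a local quasi-regular Dirichlet form on $L^2(\nu_2)$, completing the proof.
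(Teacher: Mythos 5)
Your closability and isometry arguments are correct and are essentially the intended ones: the paper gives no details here, referring instead to Theorem 5.1.3 of \cite{vosshallthesis} and Prop.~5.1 of \cite{EladAltman2019}, and those proofs run along exactly your lines --- the pointwise chain rule giving $\|\overline{\nabla}(f\circ j_2)(z)\|_{\mathbb{H}_2}=\|(\nabla f)(j_2(z))\|_{H}$ off the $\mu_2$-negligible zero set, membership of $f\circ j_2$ in $W^{1,2}(\mu_2)$ by a Lipschitz or mollification argument, transport of closedness through the isometry $\varphi\mapsto\varphi\circ j_2$, and extension of \eqref{isometry} to the closure.

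The quasi-regularity paragraph, however, has a genuine gap, located exactly at its pivot: the claim that the $O(2)$-invariant elements of $D(\Lambda^2)$ are precisely the functions $\varphi\circ j_2$, combined with the choice of an invariant compact nest satisfying $\tilde F_k=j_2^{-1}(F_k)$. Neither reading of ``fiberwise $O(2)$-action'' supports this. If the action is by \emph{constant} rotations $z\mapsto Rz$, it does preserve $\mu_2$ and $\Lambda^2$ and averaging contracts $\Lambda^2_1$, but invariance under constant rotations is strictly weaker than $\sigma(j_2)$-measurability: the functional $z\mapsto\exp\bigl(-\bigl\|\int_0^1 z(x)\d x\bigr\|_{\mathbb{R}^2}^2\bigr)$ is invariant under every constant rotation yet is not a function of the modulus field (it separates $z=(z_1,0)$ from $(|z_1|,0)$, which have the same image under $j_2$); correspondingly a rotation-invariant compact set satisfies only $\tilde F_k\subsetneq j_2^{-1}(j_2(\tilde F_k))$, so averaged equilibrium potentials need not lie in the image of $D(\mathcal{E}^2)$ and no $\mathcal{E}^2$-capacity bound follows. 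If instead the action is by the gauge group of measurable rotation fields $z(x)\mapsto R(x)z(x)$ --- which is what $\sigma(j_2)$-measurability actually corresponds to --- the argument fails twice over: this group does not preserve $\mu_2$, since the rotated field has covariance $(x\wedge y-xy)\,R(x)R(y)^{T}$, which is the bridge covariance only for constant $R$; and no nontrivial compact set is gauge-invariant, because the orbit of any $z$ with $j_2(z)\neq 0$ contains $w_n(x)=\|z(x)\|(\cos 2\pi nx,\sin 2\pi nx)$, which converges weakly to $0$ with constant norm, hence has no convergent subsequence. Behind both failures lies a structural obstruction: a positivity-preserving, $\Lambda^2_1$-contracting projection onto the image of $D(\mathcal{E}^2)$ (e.g.\ conditional expectation given $\sigma(j_2)$) is not available, because $\sigma(j_2)$ is not invariant under the Ornstein--Uhlenbeck dynamics --- the modulus field of the two-dimensional stochastic heat equation is not itself Markov, precisely the phenomenon the paper stresses at the end of Section \ref{sect_dynamics}.

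The standard way to close this gap does not push nests through $j_2$ at all. Once closability is established, $(\mathcal{E}^2,D(\mathcal{E}^2))$ is a gradient-type Dirichlet form over the Polish space $K\subset H$ with finite reference measure, and tightness of capacity is proved intrinsically (Ma--R\"ockner \cite{ma2012introduction}, Ch.~V, in the spirit of R\"ockner--Schmuland): for $\{y_k\}$ dense in the support of $\nu_2$, the functions $w_n:=1\wedge\inf_{k\leq n}\|\cdot-y_k\|_{H}$ belong to $D(\mathcal{E}^2)$ with carr\'e du champ at most $1$ (as limits of lattice operations on affine cylinder functions), decrease to $0$ $\nu_2$-a.e., and a Banach--Saks argument produces potentials of vanishing $\mathcal{E}^2_1$-norm; the compact nest is then assembled from finite unions of $H$-balls intersected over scales. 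Your first two paragraphs can stand; the third should be replaced by this direct tightness argument.
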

Let $(Q^2_t)_{t \geq 0}$ be the contraction semigroup on $(K,\nu_2)$ associated with the Dirichlet form $(\mathcal{E}^2, D(\mathcal{E}^2))$. Let also $\mathcal{B}_{b}(K)$ denote the set of Borel and bounded functions on $K$. As a consequence of Prop. \ref{closability_2}, in virtue of Thm IV.3.5 and Thm V.1.5 in \cite{ma2012introduction}, we obtain the following:

\begin{cor}
There exists a Markov diffusion process
\[M=\{\Omega, \mathcal{F}, (u_t)_{t \geq 0}, (\mathbb{P}_x)_{x \in K} \} \]
properly associated to $(\mathcal{E}^2,D(\mathcal{E}^2))$, i.e. for all $\varphi \in L^{2}(\nu_2) \cap \mathcal{B}_b(K)$, and for all $t > 0$,  $E_{x}(\varphi(u_{t})), \, x \in K$ defines an $\mathcal{E}^2$ quasi-continuous version of $Q^2_{t} \varphi$. Moreover, the process $M$ admits the following continuity property:
\[P_{x}[t \mapsto u_{t} \, \, \text{is continuous on} \, \, \mathbb{R}_{+}] = 1,\quad \text{for} \, \, \mathcal{E}^2 \, q.e. \, x  \in K. \]
\end{cor}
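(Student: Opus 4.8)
The plan is to read off the existence of the process $M$ directly from the general theory of quasi-regular Dirichlet forms, feeding the form $(\mathcal{E}^2, D(\mathcal{E}^2))$ of Prop. \ref{closability_2} into the two results of \cite{ma2012introduction} cited in the statement. The whole argument is a black-box application; the structural work has already been done in establishing Prop. \ref{closability_2}.

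First I would invoke the fact, established in Prop. \ref{closability_2}, that $(\mathcal{E}^2, D(\mathcal{E}^2))$ is a quasi-regular Dirichlet form on $L^2(\nu_2)$, the state space $K$ being Polish. Quasi-regularity is exactly the property that makes the Ma--R\"ockner association machinery applicable on a state space that is merely Polish (and not locally compact), so that no further structural hypothesis is needed. Applying Thm IV.3.5 in \cite{ma2012introduction} then yields a $\nu_2$-tight special standard process $M = \{\Omega, \mathcal{F}, (u_t)_{t \geq 0}, (\mathbb{P}_x)_{x \in K}\}$ properly associated with the form. Unwinding the definition of proper association gives precisely the first assertion: for every $\varphi \in L^2(\nu_2) \cap \mathcal{B}_b(K)$ and every $t > 0$, the map $x \mapsto E_x(\varphi(u_t))$ is an $\mathcal{E}^2$-quasi-continuous version of $Q^2_t \varphi$.

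Next I would promote $M$ to a diffusion by establishing sample-path continuity. For this I would appeal to Thm V.1.5 in \cite{ma2012introduction}, which identifies, among processes properly associated with a quasi-regular Dirichlet form, those with $\mathbb{P}_x$-a.s. continuous trajectories (for $\mathcal{E}^2$-q.e. $x \in K$) as exactly those whose associated form is \emph{local}. Since Prop. \ref{closability_2} also asserts the locality of $(\mathcal{E}^2, D(\mathcal{E}^2))$, this immediately delivers the stated continuity property and completes the argument.

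I expect no genuine obstacle at this level: the entire difficulty has been discharged into Prop. \ref{closability_2}, where quasi-regularity and locality are verified, presumably by transporting these properties through the isometry \eqref{isometry} from the Gaussian form $(\Lambda^2, D(\Lambda^2))$ associated with the $2$-dimensional linear stochastic heat equation, for which they are classical. Once those two inputs are granted, the present statement is a direct transcription of the two cited theorems, with the only care being to track the reference measure $\nu_2$ through the notions of $\nu_2$-tightness and $\mathcal{E}^2$-quasi-continuity.
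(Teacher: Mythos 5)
Your proposal is correct and coincides with the paper's own argument: the corollary is stated there precisely as a consequence of the quasi-regularity and locality established in Prop.~\ref{closability_2}, via Thm~IV.3.5 (existence of a properly associated process) and Thm~V.1.5 (locality implies continuity of sample paths) of \cite{ma2012introduction}. Nothing further is done in the paper, so your black-box application of these two theorems is exactly the intended proof.
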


The rest of this section will be devoted to show that for $\mathcal{E}^2$ q.e. $x \in K$, under $\mathbb{P}_{x}$, $(u_t)_{t\geq 0}$ solves \eqref{1<spde<3} with $\delta=2$, or rather its weaker form \eqref{formal2}. An important technical point is the density of the space $\mathcal{S}$ introduced in Section \ref{sect_ibpf} above in the Dirichlet space $D(\mathcal{E}^2)$. To state this precisely, as in Section 5 of \cite{EladAltman2019}, we consider $\mathscr{S}$ the vector space generated by functionals $F:H \to \mathbb{R}$ of the form
\[ F(\zeta) = \exp(- \langle \theta,\zeta^{2} \rangle), \quad \zeta \in H,  \] 
for some $\theta : [0,1] \to \mathbb{R}_{+}$ Borel and bounded. Note that $\mathscr{S}$ may be seen as a subspace of the space  $\mathcal{S}$ of Section \ref{sect_ibpf} in the following sense: for any $F \in \mathscr{S}$, $F \rvert_{C([0,1])} \in \mathcal{S}$. We also set
\[ \mathscr{S}_{K} := \{ F \big \rvert_{K}, \quad F \in \mathscr{S} \}.  \]

\begin{lm}
\label{density2}
$\mathscr{S}_{K}$ is dense in $D (\mathcal{E}^2)$.
\end{lm}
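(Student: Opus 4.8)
The plan is to work directly with the explicit expression $\mathcal{E}^2(f,g)=\tfrac12\int_K\langle\nabla f,\nabla g\rangle\,\d\nu_{2}$ on $K$. Since $D(\mathcal{E}^2)$ is by definition the $\mathcal{E}^2_1$-closure of $(\mathcal{E}^2,\mathcal{F}\mathcal{C}^\infty_b(K))$, where $\|f\|_{\mathcal{E}^2_1}^2:=\mathcal{E}^2(f,f)+\|f\|_{L^2(\nu_{2})}^2$, it suffices to show that every $f\in\mathcal{F}\mathcal{C}^\infty_b(K)$ is approximable in this graph norm by elements of $\mathscr{S}_K$; that is, to produce $f_n\in\mathscr{S}_K$ with $f_n\to f$ in $L^2(\nu_{2})$ and $\nabla f_n\to\nabla f$ in $L^2(\nu_{2};H)$. (Alternatively one may transport everything to $(\mathbb{H}_2,\mu_2)$ through the isometry \eqref{isometry}, but the problem is intrinsic to $K$ once one uses the explicit form of $\mathcal{E}^2$.)

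First I would record the algebraic and differential structure. The span $\mathscr{S}_K$ is an algebra, since $\exp(-\langle\theta_1,z^2\rangle)\exp(-\langle\theta_2,z^2\rangle)=\exp(-\langle\theta_1+\theta_2,z^2\rangle)$; it contains the constants (take $\theta\equiv0$); and it separates the points of $K$, because $\langle\theta,z^2\rangle=\langle\theta,(z')^2\rangle$ for all bounded Borel $\theta\geq0$ forces $z^2=(z')^2$ a.e., hence $z=z'$ by nonnegativity. A Stone--Weierstrass / monotone-class argument then gives that $\mathscr{S}_K$, and more generally the algebra of smooth functions $\Psi(\langle\theta_1,z^2\rangle,\dots,\langle\theta_n,z^2\rangle)$ it generates, is dense in $L^2(\nu_{2})$. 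I would work with this generated algebra, whose gradients are explicit: by the chain rule $\nabla\,\Psi(\langle\theta_1,z^2\rangle,\dots)=2\sum_j\partial_j\Psi\,\theta_j\,z$, i.e.\ a \emph{pointwise multiple} $\beta(z)\,z$ of the configuration $z$ itself (in particular each generator has $\mathcal{E}^2$-gradient $-2\theta z\exp(-\langle\theta,z^2\rangle)$, so $\mathscr{S}_K\subset D(\mathcal{E}^2)$).

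The crux is the simultaneous control of the gradients. The target gradient $\nabla f=\sum_i\partial_i\psi(\langle l_1,z\rangle,\dots)\,l_i$ is a combination of the fixed functions $l_i$, whereas the available gradients have the form $\beta(z)\,z$. The structural fact reconciling the two is that $z>0$ holds for $(\d x\otimes\nu_{2})$-almost every $(x,\omega)$: for each fixed $x\in(0,1)$ the law of $z(x)$ under $\nu_{2}$ has density $p^{2,x}_{0,0}$, which has no atom at $0$, so the zero set $\{(x,\omega):z(x)=0\}$ is $(\d x\otimes\nu_{2})$-null (this is why the argument, as in \cite{EladAltman2019}, does not require pathwise positivity and also covers the reflected case $\delta=1$). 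One then rewrites the target gradient as a multiple of $z$ via $l_i=(l_i/z)\,z$, valid a.e., and approximates the resulting scalar weight by gradients from the algebra, using a cut-off $\mathbf{1}_{\{z>\varepsilon\}}$ so that $l_i\,\mathbf{1}_{\{z>\varepsilon\}}\to l_i$ in $L^2(\nu_{2};H)$ by dominated convergence as $\varepsilon\downarrow0$. I expect this gradient-matching step to be the main obstacle: one must realise the correct scalar factors $\psi$ and $\partial_i\psi$ through elements of the generated algebra while simultaneously matching the gradient directions through the cut-off, and check that the $L^2(\nu_{2})$-errors of both the functions and their gradients vanish; unlike the $L^2$-density, this is genuinely quantitative and must accommodate the degeneracy of $z$ near its zeros, where the factor $z$ in the available gradients cannot be inverted. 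The whole scheme parallels the density argument of Section~5 of \cite{EladAltman2019} for $\delta=1$, the computations here being only slightly heavier.
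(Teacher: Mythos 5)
Your proposal is not a proof of the lemma: the step that carries all the difficulty is explicitly left open in your final paragraph, where you call the gradient matching ``the main obstacle'' and list what would have to be checked without checking it. What you actually establish is (i) the $L^2(\nu_2)$-density of $\mathscr{S}_{K}$ (Stone--Weierstrass/monotone class) and (ii) the formula $\nabla \Psi = 2\sum_j \partial_j \Psi\, \theta_j\, z$ for gradients in the generated algebra. Neither implies density in $D(\mathcal{E}^2)$, which is a statement about the graph norm, and your own computation (ii) shows exactly why this is delicate: every available gradient is of the form $\beta(z)\,z$, where $\beta(z)(x)$ is a \emph{finite} linear combination of the fixed functions $\theta_j(x)$ with coefficients depending on $z$ only through the finitely many scalars $\langle \theta_j, z^2\rangle$, whereas the weight you need, $(l_i(x)/z(x))\,\mathbf{1}_{\{z(x)>\varepsilon\}}$, depends pointwise on $z(x)$. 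Realising it forces the family $\{\theta_j\}$ to refine along partitions of $[0,1]$ (so the number of generators must go to infinity), and one must then prove quantitative $L^2(\nu_2)$-estimates simultaneously for the functionals and their gradients, uniformly enough to remove the cut-off as $\varepsilon \downarrow 0$ near the zeroes of $z$; none of these estimates appears in your text. Two further sub-gaps of the same nature: your parenthetical claim that $\mathscr{S}_{K} \subset D(\mathcal{E}^2)$ ``since each generator has gradient $-2\theta z e^{-\langle \theta, z^2\rangle}$'' is not automatic, because $D(\mathcal{E}^2)$ is by definition the closure of the cylinder functions $\mathcal{F}\mathcal{C}^{\infty}_{b}(K)$ and $\exp(-\langle\theta,z^2\rangle)$ is not one of them (membership already requires an approximation, e.g. truncating $\theta$ onto $\mathrm{Span}\{e_k, k \leq N\}$, whose dominated-convergence step uses $\int_K \|z\|^2 \d \nu_2(z) < \infty$); and even granting the gradient matching inside the generated algebra, you would still have to come back from that algebra to $\mathscr{S}_{K}$ itself, i.e. approximate smooth functions of finitely many $\langle\theta_j, z^2\rangle$ by linear combinations of exponentials, again in graph norm.

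Two remarks on closing the gap, and on what the paper actually does. First, you do not need strong convergence of gradients: since $\mathscr{S}_{K}$ is a linear subspace, its weak and strong closures in the Hilbert space $\bigl(D(\mathcal{E}^2),\, \mathcal{E}^2 + (\cdot,\cdot)_{L^2(\nu_2)}\bigr)$ coincide (Mazur), so it suffices to produce $f_n \in \mathscr{S}_{K}$ with $f_n \to f$ in $L^2(\nu_2)$ and $\sup_n \mathcal{E}^2(f_n,f_n) < \infty$. This replaces your hardest requirement by a uniform energy bound, and it is precisely in such bounds that the finiteness of $\int_K \|z\|^2 \d \nu_2(z)$ enters. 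Second, the paper proves the lemma without redoing any construction: it observes that the proof of Lemma 5.3 of \cite{EladAltman2019} (the case $\delta=1$) uses no property of the reference measure beyond the finiteness of its second moments, and that this property also holds for $\nu_2$; the argument then carries over verbatim. So the expected solution is either that one-line reduction or a genuine completion of the scheme you outline -- which, as written, stops exactly where the proof begins.
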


\begin{proof}
The same arguments as in the proof of Lemma 5.3 in \cite{EladAltman2019}, showing that $\mathscr{S}_{K}$ is dense in $D (\mathcal{E})$, apply here. Indeed, the only particular feature of the space $D (\mathcal{E})$ used in the proof of Lemma 5.3 is the fact that $\nu$ has finite second moments, that is $\int_{K} \| x \|^{2} \d \nu (x) < \infty$.
Since the same is true for $\nu_{2}$ in place of $\nu$, the same arguments apply for $D (\mathcal{E}^2)$ in place of $D (\mathcal{E})$, and the claim follows.
\end{proof}

\subsection{Convergence of one-potentials}

In order to show that the Markov process $(u_t)_{t \geq 0}$ constructed above satisfies an equation of the form \eqref{formal2}, we will exploit the IbPF \eqref{exp_fst_part_ibpf_a_b} for $\delta=2$ (and $a=a'=0$). Here, as for $\delta=1$, we again face the caveat that the last term in that IbPF is distributional, so providing it with a dynamic interpretation requires some care. We will follow the same route as in Section 5 of \cite{EladAltman2019}, by approximating that distributional term with a family of smooth measures, and by showing that the convergence also holds for the associated one-potentials (see Section 5 of \cite{fukushima2010dirichlet} for the definition of one-potentials). This will enable us to identify the drift term in the SPDE as a limit in probability of smooth drifts. Recall that we have the following equality in law on $K$:
\begin{equation} 
\label{relation_law_bridge_bessel}
(\|\beta_{t}\|)_{0 \leq t \leq 1} \overset{(d)}{=} \nu_2, 
\end{equation}
where $\beta=(\beta^{1},\beta^{2})$ is a two-dimensional Brownian bridge from $0$ to $0$, and $\|\cdot\|$ is the Euclidean norm on $\mathbb{R}^{2}$. Let now $\rho$ be a smooth function supported on $[-1,1]$ such that
\begin{equation}
\label{def_mollifier}
\rho \geq 0, \quad \int_{-1}^{1} \rho = 1, \quad \rho(y) = \rho(-y), \quad y \in \mathbb{R}, 
\end{equation}
and let us set, for all $\eta>0$,
\[\rho_{\eta}(y) := \frac{1}{\eta} \, \rho \left( \frac{y}{\eta} \right), \quad y \in \mathbb{R}. 
\]
Then, for any $\Phi \in \mathcal{S}$, the last term of the right-hand side in the IbPF  \eqref{exp_fst_part_ibpf_a_b} with $\delta=2$ and $a=a'=0$ can be re-expressed using the equality
\begin{equation}
\label{relation_cond_lt_2}
\begin{split}
&- \kappa(2) \int_{0}^{1} h_{r} \int_{0}^{\infty} \d b \, b^{-2}\left( \Sigma^{2,r}_{0,0}\left(\Phi(X) \,|\, b\right) - \Sigma^{2,r}_{0,0}\left(\Phi(X) \,|\, 0\right) \right)\d r  \\
&= \frac{1}{4} \, \lim_{\epsilon \to 0} \lim_{\eta \to 0} \mathbb{E} \left[ \Phi(\|\beta\|)  \int_{0}^{1} h_{r} \, \left( \frac{\mathbf{1}_{\{\|\beta_{r}\| \geq \epsilon\}}}{\|\beta_{r}\|^{3}} - \frac{2}{\epsilon} \frac{\rho_{\eta}(\|\beta_{r}\|)}{\|\beta_{r}\|} \right) \d r \right]. 
\end{split}
\end{equation}
Indeed, the equality follows upon noting that the process $X := \|\beta\|$ is distributed as $\nu_2$, and by conditioning on the value of $X_r$, $r \in (0,1)$. Let now $\epsilon, \eta >0$ with $\eta < \epsilon$ be fixed. We consider the functional $G_{\epsilon, \eta} : \mathbb{H}_{2} \to \mathbb{R}$ defined, for all $z=(z^{1},z^{2}) \in \mathbb{H}_{2}$, by
\[ G_{\epsilon, \eta}(z) := \int_{0}^{1} \d r \ h_{r} \, f_{\epsilon, \eta}(\|z_{r}\|), \]
where
\[f_{\epsilon, \eta}(x) := \frac{1}{4} \left( \frac{\mathbf{1}_{\{x \geq \epsilon\}}}{x^{3}} - \frac{2}{\epsilon} \frac{\rho_{\eta}(x)}{x}\right), \quad x \geq 0.\]
For all $\epsilon > \eta >0$, we then define the functional $V_{\epsilon, \eta }: \mathbb{H}_{2} \to \mathbb{R}$ by
\[ V_{\epsilon, \eta} (z)= \int_{0}^{\infty} e^{-t} \,  \mathbf{Q}^{2}_{t} G_{\epsilon, \eta}(z) \d t, \quad z \in H.\]
Note that, in the language of  Chap. 5 of \cite{fukushima2010dirichlet}, $V_{\epsilon, \eta}$ is the one-potential associated with the continuous additive functional
\[ \int_{0}^{t} G_{\epsilon, \eta}(v(s,\cdot)) \d s, \quad t \geq 0. \]
In particular, $V_{\epsilon, \eta} \in D(\Lambda^{2})$. We will show that $V_{\epsilon,\eta}$ converges in $D(\Lambda^2)$ as we send first $\eta$, and then $\epsilon$, to $0$. To do so, we remark that, for all $z \in \mathbb{H}_{2}$, we have
\[ V_{\epsilon, \eta}(z) = \int_{0}^{\infty} \d t \int_{0}^{1} \d r \frac{e^{-t} \,h_{r}}{8 \pi q_{t}(r)} \int_{\mathbb{R}^{2}} \d a \left( \frac{\mathbf{1}_{ \|a\| \geq  \epsilon}}{\|a\|^{3}} - \frac{2}{\epsilon} \frac{\rho_{\eta}(\|a\|)}{\|a\|} \right) \exp \left( - \frac{\|a-z(t,r)\|^{2}}{2 q_{t}(r)} \right),\]
where, for all $t >0$ and $r \in (0,1)$, $z(t,r):= (z_{1}(t,r), z_{2}(t,r)) =  \int_{0}^{1} g_{t}(r,r') \, z(r') \d r'$. We define also the functional $V_{\epsilon} : \mathbb{H}_{2} \to \mathbb{R}$ by setting, for all $z \in \mathbb{H}_{2}$,
\[
\begin{split}
V_{\epsilon}(z) &:= \int_{0}^{\infty} {\rm d}t \, e^{-t} \int_{0}^{1} \frac{h_{r}}{8 \pi q_{t}(r)} \d r \int_{\mathbb{R}^{2}} \d a \frac{\mathbf{1}_{\|a\| \geq \epsilon}}{\|a\|^{3}} \cdot \\
& \cdot \left( \exp \left( - \frac{\|a-z(t,r)\|^{2}}{2 q_{t}(r)} \right) - \exp \left( - \frac{\|z(t,r)\|^{2}}{2 q_{t}(r)} \right) \right) . 
\end{split}
\]
Note that, by splitting the domain $\mathbb{R}^{2}$ into four quadrants, we can rewrite
\begin{equation}
\label{intermediate_one_pt}
\begin{split}
V_{\epsilon}(z) &:= \int_{0}^{\infty} {\rm d}t \, e^{-t} \int_{0}^{1} \frac{h_{r}}{8 \pi q_{t}(r)}  \d r\int_{\mathbb{R}_{+}^{2}} \d a \, \frac{\mathbf{1}_{\|a\| \geq \epsilon}}{\|a\|^{3}} \cdot \\
 & \cdot \sum_{\alpha \in \{ -1, 1 \}^{2}} \left( \exp \left( - \frac{\|\alpha a-z(t,r)\|^{2}}{2 q_{t}(r)} \right) - \exp \left( - \frac{\|z(t,r)\|^{2}}{2 q_{t}(r)} \right) \right), 
\end{split}
\end{equation}
where, for all $a \in \mathbb{R}^{2}$ and $\alpha \in \{ -1, 1 \}^{2} $, we have set
\[ \alpha \, a := (\alpha_{1} a_{1}, \alpha_{2} a_{2}). \]
Let us finally define the functional $V: \mathbb{H}_{2} \to \mathbb{R}$ by setting, for all $z \in \mathbb{H}_{2}$
\begin{equation}
\label{limiting_one_pt_bis}
\begin{split}
 V(z) &:= \int_{0}^{\infty} {\rm d}t \, e^{-t} \int_{0}^{1} \frac{h_{r}}{8 \pi q_{t}(r)} \d r \int_{\mathbb{R}_{+}^{2}} \frac{\d a}{\|a\|^{3}} \cdot \\
 & \cdot \sum_{\alpha \in \{ -1, 1 \}^{2}} \left( \exp \left( - \frac{\|\alpha a-z(t,r)\|^{2}}{2 q_{t}(r)} \right) - \exp \left( - \frac{\|z(t,r)\|^{2}}{2 q_{t}(r)} \right) \right) .
\end{split} 
\end{equation}
We then have the following result, the proof of which is postponed to the Appendix \ref{Proofs_dynamics}.

\begin{prop}
\label{conv_one_pot}
The functionals $V_{\epsilon}$ and $V$ all belong to $D(\Lambda^2)$. Moreover:
\[ \forall \epsilon >0, \quad  \underset{\eta \to 0}{\lim} \, V_{\epsilon,\eta} = V_{\epsilon},\]
and
\[\underset{\epsilon \to 0}{\lim} \, V_{\epsilon} = V,\] 
where all convergences take place in $D(\Lambda^2)$.
\end{prop}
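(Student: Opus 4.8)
The plan is to reduce convergence in $D(\Lambda^2)$ to the simultaneous convergence of the functionals in $L^2(\mu_2)$ and of their $\mathbb{H}_2$-gradients in $L^2(\mu_2;\mathbb{H}_2)$, exploiting that for $F \in D(\Lambda^2)$ one has $\Lambda^2_1(F,F) := \Lambda^2(F,F) + \|F\|^2_{L^2(\mu_2)} = \|F\|^2_{L^2(\mu_2)} + \tfrac12\int_{\mathbb{H}_2}\|\overline{\nabla} F\|^2_{\mathbb{H}_2}\,\d\mu_2$. Membership of $V_\epsilon$ and $V$ in $D(\Lambda^2)$ will then follow either from uniform bounds on these two quantities together with the closedness of $\Lambda^2$, or directly from the explicit formulas. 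The starting point is that each of $V_{\epsilon,\eta}$, $V_\epsilon$ and $V$ depends on $z$ only through the linear quantities $z(t,r)=\int_0^1 g_t(r,r')\,z(r')\,\d r'$; differentiating the Gaussian exponentials and applying the chain rule yields an explicit $\mathbb{H}_2$-valued expression for the gradient, with kernel $g_t(r,\cdot)$ multiplying the $a$-integral of the $z$-derivative of the exponentials against $\|a\|^{-3}$. Thus everything reduces to controlling, uniformly, a family of singular Gaussian integrals and their $z$-derivatives, for which I would invoke dominated convergence.

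For the limit $\eta \to 0$ at fixed $\epsilon$, the key is that the mollifier term concentrates: writing the inner integral in polar coordinates, the factor $\|a\|^{-1}$ in $\tfrac{2}{\epsilon}\tfrac{\rho_\eta(\|a\|)}{\|a\|}$ is absorbed by the Jacobian $2\pi\|a\|$, so that $\tfrac{2}{\epsilon}\int_{\mathbb{R}^2}\tfrac{\rho_\eta(\|a\|)}{\|a\|}\,K(a)\,\d a \to \tfrac{2\pi}{\epsilon}K(0)$ for any bounded continuous $K$, using $\int_0^\infty\rho_\eta = \tfrac12$. Applied to $K(a)=\exp(-\|a-z(t,r)\|^2/(2q_t(r)))$ this produces exactly the centering term $-\tfrac{2\pi}{\epsilon}\exp(-\|z(t,r)\|^2/(2q_t(r)))$ appearing in $V_\epsilon$, giving the pointwise convergence $V_{\epsilon,\eta}\to V_\epsilon$ and, after differentiating in $z$, the analogous convergence of the gradient integrands. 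I would upgrade these to convergence in $L^2(\mu_2)$ and in $L^2(\mu_2;\mathbb{H}_2)$ by dominated convergence, crucially using that $h\in C^2_c(0,1)$, so that $r$ stays in a compact subinterval of $(0,1)$ on which $q_t(r)$ and the heat kernel are uniformly controlled, together with the small-time asymptotics $q_t(r)\sim\sqrt{t/\pi}$ to handle the region $t\to 0$.

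For the limit $\epsilon \to 0$, the decomposition into quadrants $\alpha\in\{-1,1\}^2$ is essential. By the $\pm$-symmetry the odd, linear-in-$a$ contributions cancel in the sum, so that $\sum_{\alpha}\big(\exp(-\|\alpha a - z(t,r)\|^2/(2q_t(r))) - \exp(-\|z(t,r)\|^2/(2q_t(r)))\big) = O(\|a\|^2)$ as $a\to0$; consequently $\|a\|^{-3}$ times this sum is $O(\|a\|^{-1})$, which is integrable over $\mathbb{R}^2_+$. The same cancellation survives differentiation in $z$: computing $\nabla_z$ of each summand and summing over $\alpha$, the $O(1)$ and $O(\|a\|)$ terms again cancel, leaving an $O(\|a\|^2)$ remainder. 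This furnishes $\epsilon$-uniform integrable majorants near $a=0$, allowing one to remove the cutoff $\mathbf{1}_{\|a\|\geq\epsilon}$ by dominated convergence, both for $V_\epsilon\to V$ and for the gradients, whence $V\in D(\Lambda^2)$ and $V_\epsilon\to V$ in $D(\Lambda^2)$.

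The main difficulty is the convergence of the energies, that is of the gradients in $L^2(\mu_2;\mathbb{H}_2)$, rather than of the functionals themselves, since the differentiated integrands are more singular. The delicate point is to produce majorants that are simultaneously integrable near $a=0$ — where one must rely on the order-$\|a\|^2$ cancellation of the quadrant sum surviving differentiation in $z$ — and near $t=0$, where $q_t(r)\to 0$ degenerates the Gaussian kernels; here the compact support of $h$ in $(0,1)$ and the asymptotics of $q_t(r)$ are what keep the $t$-integral convergent after the rescaling $a\mapsto\sqrt{q_t(r)}\,a$. Once these uniform bounds are established, dominated convergence yields all the stated limits, and the closedness of $(\Lambda^2,D(\Lambda^2))$ together with the identification of the $L^2(\mu_2)$-limits gives $V_\epsilon,V\in D(\Lambda^2)$.
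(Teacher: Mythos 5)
Your treatment of the $a$-integrals is sound: the identification of the $\eta\to0$ limit via mollifier concentration (using $\int_0^\infty\rho_\eta=\tfrac12$ and $\int_{\|a\|\geq\epsilon}\|a\|^{-3}\d a = 2\pi/\epsilon$) is correct, and so is the order-$\|a\|^{2}$ cancellation of the quadrant sum, which does survive differentiation in $z$. Moreover, for the functionals themselves your strategy works: after the rescaling $a=\sqrt{q_t(r)}\,u$, the integrand of $V_\epsilon(z)$ and $V(z)$ is bounded by $C\,q_t(r)^{-3/2}\sim C\,t^{-3/4}$ uniformly in $z$ (for $r$ in the support of $h$), which is integrable against $e^{-t}\d t$, so pointwise bounds plus dominated convergence do give the $L^2(\mu_2)$ statements.

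The genuine gap is in the gradient (energy) part, which is the heart of the proposition: the $z$-uniform majorants you invoke do not exist there. Differentiating in $z$ brings down a factor $(\alpha a - z(t,r))/q_t(r)$, i.e. an extra $q_t(r)^{-1/2}$ after your rescaling, so the pointwise-in-$z$ bound on the gradient integrand at time $t$ is of order $q_t(r)^{-2}\sim t^{-1}$ times a bounded function of $w = z(t,r)/\sqrt{q_t(r)}$ (and the $\mathbb{H}_2$-norm of the kernel $g_t(r,\cdot)$ coming from the chain rule makes this worse still); $\int_0 t^{-1}\d t$ diverges. Neither the compact support of $h$ (which only guarantees $q_t(r)\sim \sqrt{t/\pi}$ uniformly) nor the quadrant cancellation (already spent on making the $a$-integral converge) improves this exponent. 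What actually saves the energy is that for $\mu_2$-typical $z$ the rescaled argument $w$ is large, of order $t^{-1/4}$, when $t$ is small, so the integrand is small except on an event of small $\mu_2$-probability: finiteness of $\Lambda^2(V,V)$ is an effect of the average over $z\sim\mu_2$ and cannot be reached by a pointwise dominated-convergence argument. This is exactly where the paper's proof differs: it never bounds $V_{\epsilon,\eta}(z)$ pointwise, but computes second moments, writing $\|\overline{\nabla}\mathbf{Q}^{2}_{t}G_{\epsilon,\eta}\|^{2}_{L^{2}}$ as a double integral over two space points $r,s$ against the two-point Gaussian density $\Gamma_{r,s}$, whose covariance matrix $M$ satisfies both $\det M\geq \vartheta^{2}|r-s|$ (uniformly in $t$, an effect of the stationary initial condition) and $\det M\geq c_\vartheta\sqrt{t\wedge 1}$; interpolating between these two lower bounds trades part of the $t$-singularity for an integrable $|r-s|^{-1/2}$ singularity and yields the exponent $(t\wedge1)^{-7/8}$, which is integrable against $e^{-t}\d t$. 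To repair your argument you would either have to adopt this second-moment computation, or prove $z$-dependent pointwise bounds quantifying the decay of the rescaled integrand in $\|w\|$ and then verify their $\mu_2$-integrability; that step is absent from your proposal.
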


\subsection{From the IbPFs to the dynamics} 

Using the above results as well as the IbPF for $P^{2}$, and arguing as in Section 5 of \cite{EladAltman2019}, we can now obtain an identification result for the dynamics of the Markov process $M$ constructed above. 
In the sequel, we set $\Lambda^{2}_{1} := \Lambda^{2} + (\cdot,\cdot)_{L^{2}(\mu_2)}$ and $\mathcal{E}^{2}_{1} := \mathcal{E}^{2} + (\cdot,\cdot)_{L^{2}(\nu_2)}$. As for the case $\delta=1$ considered in Section 5 of \cite{EladAltman2019}, we shall exploit a projection principle, the proof of which follows exactly as for Lemma 5.5 in \cite{EladAltman2019}:

\begin{lm}
\label{projection2}
There exists a unique bounded linear operator $\Pi: D(\Lambda^{2}) \to D(\mathcal{E}^{2})$ such that, for all $F,G \in D(\Lambda^{2})$ and $f \in D(\mathcal{E}^{2})$,
\[ \Lambda^{2}_{1}(F,f \circ j_{2}) = \mathcal{E}^{2}_{1}(\Pi F,f), \]
where $j_{2}$ is as in \eqref{euclidean_norm_map}. Moreover, we have:
\[\mathcal{E}^{2}_{1} (\Pi F,\Pi F) \leq \Lambda^{2}_{1}(F,F). \]
\end{lm}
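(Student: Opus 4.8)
The plan is to construct the operator $\Pi$ as the adjoint, with respect to the inner products $\Lambda^2_1$ and $\mathcal{E}^2_1$, of the isometric embedding $f \mapsto f \circ j_2$ guaranteed by Proposition \ref{closability_2}. More precisely, I would first recall from Proposition \ref{closability_2} that the map $j_2^* : D(\mathcal{E}^2) \to D(\Lambda^2)$ defined by $j_2^* f := f \circ j_2$ is well-defined, linear, and satisfies $\mathcal{E}^2(f,g) = \Lambda^2(f \circ j_2, g \circ j_2)$ together with the isometry $\|f\|_{L^2(\nu_2)} = \|f \circ j_2\|_{L^2(\mu_2)}$ coming from \eqref{image_measure}. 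Adding the $L^2$ pairings, these two identities combine into
\[
\mathcal{E}^2_1(f,g) = \Lambda^2_1(f \circ j_2, g \circ j_2), \qquad f,g \in D(\mathcal{E}^2),
\]
so that $j_2^*$ is an isometry from $(D(\mathcal{E}^2), \mathcal{E}^2_1)$ into $(D(\Lambda^2), \Lambda^2_1)$. Both spaces are Hilbert spaces once equipped with these inner products (the forms are closed), which is the structural fact the whole argument rests on.

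Next I would fix $F \in D(\Lambda^2)$ and consider the linear functional $\ell_F : D(\mathcal{E}^2) \to \mathbb{R}$ given by $\ell_F(f) := \Lambda^2_1(F, f \circ j_2)$. By the Cauchy--Schwarz inequality for $\Lambda^2_1$ and the isometry above,
\[
|\ell_F(f)| = |\Lambda^2_1(F, f \circ j_2)| \leq \Lambda^2_1(F,F)^{1/2}\, \Lambda^2_1(f \circ j_2, f \circ j_2)^{1/2} = \Lambda^2_1(F,F)^{1/2}\, \mathcal{E}^2_1(f,f)^{1/2},
\]
so $\ell_F$ is bounded on the Hilbert space $(D(\mathcal{E}^2), \mathcal{E}^2_1)$ with norm at most $\Lambda^2_1(F,F)^{1/2}$. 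The Riesz representation theorem then yields a unique element, which I define to be $\Pi F \in D(\mathcal{E}^2)$, such that $\mathcal{E}^2_1(\Pi F, f) = \ell_F(f) = \Lambda^2_1(F, f \circ j_2)$ for all $f \in D(\mathcal{E}^2)$; this is exactly the defining identity. Uniqueness of $\Pi F$ is immediate since $\mathcal{E}^2_1$ is an inner product, and linearity of $F \mapsto \Pi F$ follows from linearity of $\ell_F$ in $F$ together with uniqueness. For the contraction estimate, I would take $f = \Pi F$ in the defining relation and apply the same Cauchy--Schwarz bound:
\[
\mathcal{E}^2_1(\Pi F, \Pi F) = \Lambda^2_1(F, (\Pi F) \circ j_2) \leq \Lambda^2_1(F,F)^{1/2}\, \mathcal{E}^2_1(\Pi F, \Pi F)^{1/2},
\]
and dividing through by $\mathcal{E}^2_1(\Pi F, \Pi F)^{1/2}$ (handling the trivial case $\Pi F = 0$ separately) gives $\mathcal{E}^2_1(\Pi F, \Pi F) \leq \Lambda^2_1(F,F)$, which also shows $\Pi$ is bounded with operator norm at most $1$.

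I do not expect a genuine obstacle here, since the statement is a clean Hilbert-space adjoint/projection construction; the only point requiring care is verifying the hypotheses that make the Riesz argument legitimate, namely that $(D(\mathcal{E}^2), \mathcal{E}^2_1)$ is complete (which holds because $\mathcal{E}^2$ is closed by Proposition \ref{closability_2}) and that $j_2^* f = f \circ j_2$ genuinely lands in $D(\Lambda^2)$ for every $f \in D(\mathcal{E}^2)$ (again supplied by Proposition \ref{closability_2}). The bilinearity and symmetry needed to treat $\mathcal{E}^2_1$ and $\Lambda^2_1$ as honest inner products are standard properties of Dirichlet forms augmented by the $L^2$ pairing. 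As the statement itself indicates, the argument is identical to that of Lemma 5.5 in \cite{EladAltman2019} for the case $\delta = 1$, the only change being the replacement of the $1$-dimensional objects by their $2$-dimensional counterparts $\mu_2, \nu_2, j_2$, so I would simply transcribe that proof with the evident substitutions.
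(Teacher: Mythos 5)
Your proposal is correct and follows essentially the same route as the paper, which simply defers to Lemma 5.5 of \cite{EladAltman2019}: there, too, the operator $\Pi$ is obtained from the fact that $f \mapsto f \circ j$ is an isometric embedding of the Dirichlet space $(D(\mathcal{E}),\mathcal{E}_1)$ onto a closed subspace of $(D(\Lambda),\Lambda_1)$, with $\Pi$ realised as the orthogonal projection onto that subspace composed with the inverse isometry, which is equivalent to your Riesz-representation formulation. The Cauchy--Schwarz contraction bound and the uniqueness/linearity arguments are identical in both versions.
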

As a consequence, we can obtain a stronger version of the IbPF for $P^{2}$. Recall that, by Prop \ref{conv_one_pot} and by the above definitions, $V = \underset{\epsilon \to 0}{\lim} \, \underset{\eta \to 0}{\lim} \, V_{\epsilon, \eta}$, where $V_{\epsilon, \eta} \in D(\Lambda^2)$ is the one-potential associated with the additive functional
\[  \int_0^t \d s \, G_{\epsilon, \eta}(v(s, \cdot)) = \int_0^t \d s \int_0^1 \d r \, h_r f_{\epsilon, \eta}(\|v(s,r)\|) \d r \d s. \] 
Therefore, combining the IbPF \eqref{exp_fst_part_ibpf_a_b} with $\delta=2$ and $a=a'=0$, the equality \eqref{relation_cond_lt_2}, the density result \ref{density2} and the projection principle \ref{projection2}, and arguing as for the proof of Corollary 5.6 in \cite{EladAltman2019}, we obtain the following result:

\begin{cor}
For all $f \in D(\Lambda^{2})$ and $h \in C^{2}_{c}(0,1)$, we have
\begin{equation}
\label{IbPF_Dirichlet_2}
\mathcal{E}^{2}\left(\langle h, \cdot \rangle - \frac{1}{2} \Pi V \, ,\, f\right) = - \frac{1}{2} \int_K (\langle h'', \zeta \rangle - \Pi  V(\zeta)) f(\zeta) \d \nu_2 (\zeta), 
\end{equation}
where $V$ is as in \eqref{limiting_one_pt_bis}.
\end{cor}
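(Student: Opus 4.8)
The plan is to prove \eqref{IbPF_Dirichlet_2} first for test functionals $f=\Phi\in\mathscr{S}_K$ and then extend it to the whole Dirichlet space by density. For the extension, recall that by Lemma \ref{density2} the space $\mathscr{S}_K$ is dense in $D(\mathcal{E}^2)$, while both sides of \eqref{IbPF_Dirichlet_2} are continuous in $f$ for the $\mathcal{E}^2_1$-norm: the left-hand side by the Cauchy--Schwarz inequality for $\mathcal{E}^2$, and the right-hand side because $\langle h'',\cdot\rangle-\Pi V\in L^2(\nu_2)$ (here $\langle h'',\cdot\rangle\in L^2(\nu_2)$ since $\nu_2$ has finite second moment, and $\Pi V\in D(\mathcal{E}^2)\subset L^2(\nu_2)$). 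Hence it suffices to treat $\Phi\in\mathscr{S}_K$.

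The core of the argument is to identify the quantity $\Lambda^2_1(V,\Phi\circ j_2)$. For $\epsilon>\eta>0$, the functional $V_{\epsilon,\eta}$ is the one-potential of $G_{\epsilon,\eta}$, so it satisfies the defining relation $\Lambda^2_1(V_{\epsilon,\eta},F)=\int_{\mathbb{H}_2}G_{\epsilon,\eta}\,F\,\d\mu_2$ for every $F\in D(\Lambda^2)$ (see Chap. 5 of \cite{fukushima2010dirichlet}). Taking $F=\Phi\circ j_2$ and using the image-measure relation \eqref{image_measure} together with the representation $X=\|\beta\|\sim\nu_2$, the right-hand side becomes $\mathbb{E}\big[\Phi(\|\beta\|)\int_0^1 h_r\,f_{\epsilon,\eta}(\|\beta_r\|)\,\d r\big]$. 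I now let $\eta\to0$ and then $\epsilon\to0$. On the one hand, Prop \ref{conv_one_pot} gives $V_{\epsilon,\eta}\to V$ in $D(\Lambda^2)$ in precisely this order of limits, so $\Lambda^2_1(V_{\epsilon,\eta},\Phi\circ j_2)\to\Lambda^2_1(V,\Phi\circ j_2)$ by continuity of $\Lambda^2_1$ in its first argument. On the other hand, by \eqref{relation_cond_lt_2} the $\mu_2$-integral converges to the drift term of the IbPF \eqref{exp_fst_part_ibpf_a_b} with $\delta=2$ and $a=a'=0$, which by that same IbPF equals $E^2_{0,0}(\partial_h\Phi)+E^2_{0,0}(\langle h'',X\rangle\,\Phi)$. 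Since both limits act on the single scalar above, they must coincide, yielding $\Lambda^2_1(V,\Phi\circ j_2)=E^2_{0,0}(\partial_h\Phi)+E^2_{0,0}(\langle h'',X\rangle\,\Phi)$.

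It then remains to rewrite each side using $\mathcal{E}^2$. By the projection principle, Lemma \ref{projection2}, the left-hand side equals $\mathcal{E}^2_1(\Pi V,\Phi)=\mathcal{E}^2(\Pi V,\Phi)+(\Pi V,\Phi)_{L^2(\nu_2)}$. For the right-hand side, the functional $\langle h,\cdot\rangle$ has constant gradient $h$, so $E^2_{0,0}(\partial_h\Phi)=\int_K\langle\nabla\Phi,h\rangle\,\d\nu_2=2\,\mathcal{E}^2(\langle h,\cdot\rangle,\Phi)$, while $E^2_{0,0}(\langle h'',X\rangle\,\Phi)=(\langle h'',\cdot\rangle,\Phi)_{L^2(\nu_2)}$. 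Substituting and cancelling, I obtain $\mathcal{E}^2(2\langle h,\cdot\rangle-\Pi V,\Phi)=(\Pi V-\langle h'',\cdot\rangle,\Phi)_{L^2(\nu_2)}$; dividing by $2$ gives exactly \eqref{IbPF_Dirichlet_2} for $f=\Phi$, and the density step concludes.

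The main point requiring care is the double limit combined with the two different topologies: Prop \ref{conv_one_pot} delivers convergence $V_{\epsilon,\eta}\to V$ in the strong $D(\Lambda^2)$-topology, whereas \eqref{relation_cond_lt_2} only asserts convergence of the $\mu_2$-integrals. The resolution is that both statements describe the limit of the same scalar $\Lambda^2_1(V_{\epsilon,\eta},\Phi\circ j_2)=\int_{\mathbb{H}_2}G_{\epsilon,\eta}(\Phi\circ j_2)\,\d\mu_2$, so no independent interchange of limits is needed and the two limits are forced to agree. Once this identification is in place, everything else is the routine translation between the gradient form $\mathcal{E}^2$, the projection operator $\Pi$, and the expectation $E^2_{0,0}$ under $\nu_2$.
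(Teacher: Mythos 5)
Your proof is correct and follows essentially the same route as the paper, which obtains the corollary by combining exactly the ingredients you use: the IbPF \eqref{exp_fst_part_ibpf_a_b} with $\delta=2$, $a=a'=0$, the identity \eqref{relation_cond_lt_2}, the one-potential characterisation together with Proposition \ref{conv_one_pot}, the projection principle of Lemma \ref{projection2}, and the density Lemma \ref{density2}. Your observation that the two limits act on the same scalar $\Lambda^2_1(V_{\epsilon,\eta},\Phi\circ j_2)=\int_{\mathbb{H}_2}G_{\epsilon,\eta}\,(\Phi\circ j_2)\,\d\mu_2$, so that no interchange of limits is needed, is precisely the mechanism behind the paper's argument (modelled on Corollary 5.6 of \cite{EladAltman2019}).
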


 
Recall that $M=\{\Omega, \mathcal{F}, (u_t)_{t \geq 0}, (\mathbb{P}_x)_{x \in K} \}$ denotes the Markov process properly associated with the Dirichlet form $(\mathcal{E}^2,D (\mathcal{E}^2))$ constructed above. 
The following theorem says that the process $(u_t)_{t \geq 0}$ satisfies the SPDE \eqref{formal2} above, which is a weaker version of the Bessel SPDE \eqref{1<spde<3} with $\delta=2$.

\begin{thm}
\label{weak_spde_2}
For all $h \in C^{2}_{c}(0,1)$, we have, almost surely
\[ \langle u_{t}, h \rangle - \langle u_0, h \rangle = M_{t} + N_{t}, \quad t \geq 0, \qquad \mathbb{P}_{u_0}-\text{a.s.}, \quad \text{q.e.} \ u_0 \in K.  \]
Here $(N_{t})_{t \geq 0}$ is a continuous additive functional of zero energy satisfying
\[N_{t} = \frac{1}{2} \int_{0}^{t} \langle h'', u_{s} \rangle \d s - \lim_{\epsilon \to 0} \lim_{\eta \to 0}  \, N^{\epsilon, \eta}_{t}, \]
where
\[N^{\epsilon, \eta}_{t} = \frac{1}{2} \int_{0}^{t} \langle f_{\epsilon,\eta}(u_{s}), h \rangle \d s, \]
and where the limit holds in $\mathbb{P}_{\nu_2}$ probability, for the topology  of uniform convergence in $t$ on each finite interval. Moreover, $(M_{t})_{t \geq 0}$ is a martingale additive functional whose sharp bracket has Revuz measure $\|h\|_{L^{2}}^{2} \, \nu_2$. 
\end{thm}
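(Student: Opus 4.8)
The plan is to read the identity as the Fukushima decomposition of the additive functional $t \mapsto \langle u_t, h\rangle$. Since $\langle \cdot, h\rangle$ is linear and continuous it lies in $D(\mathcal{E}^2)$, so applying the Fukushima decomposition theorem (see \cite{fukushima2010dirichlet,ma2012introduction}) to the diffusion $M$ gives, for q.e.\ starting point and $\mathbb{P}_{u_0}$-a.s.,
\[ \langle u_t, h\rangle - \langle u_0, h\rangle = M_t + N_t, \]
with $M$ a martingale additive functional of finite energy and $N$ a continuous additive functional of zero energy. The martingale part is immediate: the sharp bracket $\langle M\rangle$ has Revuz measure equal to the energy measure of $\langle \cdot, h\rangle$, and since this functional is linear its $H$-gradient is the constant map $\zeta \mapsto h$, so the energy measure is $\|h\|_{L^2}^2\,\nu_2$, as claimed. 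The entire difficulty is to identify $N$.

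For this I would use the strong IbPF \eqref{IbPF_Dirichlet_2}. Setting $G := \langle \cdot, h\rangle - \tfrac12 \Pi V$, the identity \eqref{IbPF_Dirichlet_2} says precisely that $\mathcal{E}^2(G,f) = -\tfrac12 \int_K (\langle h'', \zeta\rangle - \Pi V(\zeta))\, f\, \d\nu_2$ for all $f$, i.e.\ that $G$ belongs to the domain of the $L^2(\nu_2)$-generator $L^{(2)}$ of $\mathcal{E}^2$, with $L^{(2)}G = \tfrac12(\langle h'', \cdot\rangle - \Pi V)$; hence its zero-energy part is the genuine time integral $N^{[G]}_t = \tfrac12 \int_0^t (\langle h'', u_s\rangle - \Pi V(u_s))\, \d s$. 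Next, fixing $\epsilon > \eta > 0$ and writing $G_{\epsilon,\eta} = \tilde g_{\epsilon,\eta}\circ j_2$ with $\tilde g_{\epsilon,\eta}(\zeta) := \langle f_{\epsilon,\eta}(\zeta), h\rangle$, the projection principle (Lemma \ref{projection2}) together with $V_{\epsilon,\eta} = R_1^{\Lambda^2} G_{\epsilon,\eta}$ and \eqref{image_measure} identifies $\Pi V_{\epsilon,\eta}$ with the downstairs $1$-resolvent $R_1^{\mathcal{E}^2}\tilde g_{\epsilon,\eta}$. Thus $\Pi V_{\epsilon,\eta} \in D(L^{(2)})$ with $L^{(2)}\Pi V_{\epsilon,\eta} = \Pi V_{\epsilon,\eta} - \tilde g_{\epsilon,\eta}$, and its zero-energy part is the bounded-variation functional $N^{[\Pi V_{\epsilon,\eta}]}_t = \int_0^t (\Pi V_{\epsilon,\eta}(u_s) - \tilde g_{\epsilon,\eta}(u_s))\, \d s$, while by definition $N^{\epsilon,\eta}_t = \tfrac12 \int_0^t \tilde g_{\epsilon,\eta}(u_s)\, \d s$.

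It then remains to pass to the limit and collect the terms. By Proposition \ref{conv_one_pot} one has $V_{\epsilon,\eta}\to V_\epsilon \to V$ in $D(\Lambda^2)$, so by boundedness of $\Pi$ (Lemma \ref{projection2}) the projections converge, $\Pi V_{\epsilon,\eta}\to \Pi V$ in $D(\mathcal{E}^2)$, along the same nested limits. Rewriting $N^{\epsilon,\eta}_t = \tfrac12\int_0^t \Pi V_{\epsilon,\eta}(u_s)\,\d s - \tfrac12 N^{[\Pi V_{\epsilon,\eta}]}_t$, I would let first $\eta\to 0$ and then $\epsilon\to 0$: the $L^2(\nu_2)$-convergence of $\Pi V_{\epsilon,\eta}$ and stationarity under $\mathbb{P}_{\nu_2}$ give $\int_0^t \Pi V_{\epsilon,\eta}(u_s)\,\d s \to \int_0^t \Pi V(u_s)\,\d s$, while the $\mathcal{E}^2_1$-convergence of the $1$-potentials gives $N^{[\Pi V_{\epsilon,\eta}]}\to N^{[\Pi V]}$ in $\mathbb{P}_{\nu_2}$-probability, uniformly on compact time intervals. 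Hence $\lim_\epsilon\lim_\eta N^{\epsilon,\eta}_t$ exists in the stated sense and equals $\tfrac12\int_0^t \Pi V(u_s)\,\d s - \tfrac12 N^{[\Pi V]}_t$. Finally, writing $N = N^{[G]} + \tfrac12 N^{[\Pi V]}$ and substituting, the two occurrences of $\tfrac12\int_0^t \Pi V(u_s)\,\d s$ cancel, leaving exactly $N_t = \tfrac12\int_0^t \langle h'', u_s\rangle\,\d s - \lim_\epsilon\lim_\eta N^{\epsilon,\eta}_t$. The main obstacle is precisely this last convergence of zero-energy additive functionals: unlike each $N^{[\Pi V_{\epsilon,\eta}]}$, the limiting functional $N^{[\Pi V]}$ need not have bounded variation, so its convergence cannot be read off pathwise and must be extracted from the energy-norm convergence of the associated $1$-potentials through the general estimates of Chapter 5 of \cite{fukushima2010dirichlet} relating $\mathcal{E}^2_1$-convergence to uniform convergence in probability of the corresponding additive functionals, which is why the identification is obtained only under the equilibrium measure $\mathbb{P}_{\nu_2}$ rather than for q.e.\ fixed starting point.
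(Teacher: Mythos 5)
Your proposal is correct and takes essentially the same approach as the paper: the paper's proof of Theorem \ref{weak_spde_2} consists of invoking \eqref{IbPF_Dirichlet_2} together with the argument of Theorem 5.9 in \cite{EladAltman2019}, which is exactly the scheme you spell out (Fukushima decomposition of $\langle u_t, h\rangle$, identification of the martingale part via the energy measure, identification of the zero-energy part through the generator using \eqref{IbPF_Dirichlet_2} and the resolvent identity $\Pi V_{\epsilon,\eta} = R_1 \tilde g_{\epsilon,\eta}$, and passage to the limit via Proposition \ref{conv_one_pot}, Lemma \ref{projection2}, and the stability of Fukushima decompositions under $\mathcal{E}_1$-convergence, under $\mathbb{P}_{\nu_2}$). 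I see no gaps; your write-up in fact makes explicit the details that the paper leaves to the cited reference.
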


\begin{proof}
The result follows from \eqref{IbPF_Dirichlet_2} using the same arguments as in the proof of Theorem 5.9 in \cite{EladAltman2019}.
\end{proof}

\subsection{A transition in the dynamics at $\delta=2$?}

The $2$-Bessel SPDE studied above is believed to display a very interesting behavior at $0$. Thus, we believe that $2$ should be the critical value of $\delta>0$ for the probability of a solution to the $\delta$-Bessel SPDE to hit $0$: see section 6 of \cite{EladAltman2019}. More precisely, extrapolating the main result of \cite{dalang2006hitting}, we conjecture that, for all $\delta > 2$, a solution to the $\delta$-Bessel SPDE a.s. hits the obstacle $0$ in at most $\lfloor\frac 4{\delta-2}\rfloor$ space points simultaneously in time, while for $\delta < 2$ it may vanish at infinitely many space points simultaneously with positive probability. This would be in agreement with the fact that $\delta=2$ is also the critical dimension for the probability that the $\delta$-Bessel process or bridge hit $0$. In the particular case $\delta=2$, we conjecture that the solution $(u_{t})_{t \geq 0}$ can hit an arbitrary number of space points simulatneously, that is, for all $k \in \mathbb{N}$,
\[\mathbb{P}(\{\exists \, t > 0, \exists \, x_1 < \ldots < x_k \, | \, u_t(x_1, \ldots, x_k) =0\}) > 0 . \]
 Note that this was proven in \cite{dalang2006hitting} (see Theorem 2.4 therein) in the case of a stationnary 2-dimensional pinned string. More precisely, defining  $U_t(x) \in \mathbb{R}^2$, $t \geq 0, x \in \mathbb{R}$ to be the stationary solution of the 2-dimensional vector-valued heat equation on $\mathbb{R}_+ \times \mathbb{R}$, then for any $k \in \mathbb{N}$, 
\[\mathbb{P}(\{\exists \, t > 0, \exists \, x_1 < \ldots < x_k \, | \, U_t(x_1, \ldots, x_k) =0\}) > 0 . \]
This tends to support our conjecture on the behavior at $0$ of the Bessel SPDE of parameter $\delta=2$. Note however that the process $(\|U_t\|)_{t \geq 0}$, where $\| \cdot \|$ denotes the Euclidean norm in $\mathbb{R}^2$, does \textit{not} coinicide with the Markov process constructed in Section \ref{sect_dynamics} above, as one would like to infer by analogy with the finite-dimensional setting. Although we did not provide a proof of this fact, it could be shown similarly as Theorem 5.9 in \cite{EladAltman2019}, which treated the case $\delta=1$. Therefore the precise hitting properties of the Bessel SPDE of parameter $\delta=2$ seems to be a very open, subtle and intriguing question.

\appendix

\section{Proof of a technical lemma from Section \ref{sect_ibpf}}\label{Proofs_ibpf}

\begin{proof}[Proof of Lemma \ref{unconstrained_secondder}]

We have
\begin{align*} 
E^{\delta}_{a} \left( {X_{t}} \right) =Q^{\delta}_{x} \left( \sqrt{X_{t}} \right) &= \int_{0}^{\infty} y^{\frac{\delta+1}{2} -1} \left(y^{1-\delta/2} q^{\delta}_{t}(x,y) \right)  \d y \\
&=  \Gamma \left( \frac{\delta+1}{2} \right) \left\langle \mu_{\frac{\delta+1}{2}} (y), \, y^{1-\delta/2} q^{\delta}_{t}(x,y)  \right\rangle.
\end{align*}
Therefore, differentiating in $t$, we obtain
\[ 
\frac{\d}{\d t} \, Q^{\delta}_{x} \left( \sqrt{X_{t}} \right) =  \Gamma \left( \frac{\delta+1}{2} \right) \left\langle \mu_{\frac{\delta+1}{2}} (y), \partial_{t} \left(y^{1-\delta/2} q^{\delta}_{t}(x,y) \right) \right\rangle.
\]
Note that exchanging $\frac{\d}{\d t}$ and the brackets is justified by the fact that the function
\[ y \mapsto \partial_{t} \left(y^{1-\delta/2} q^{\delta}_{t}(x,y) \right)  \] 
is in $\mathcal{S}([0,\infty))$, so that
\[ \int_{0}^{\infty} y^{\frac{\delta+1}{2}-1}\left|\partial_{t} \left(y^{1-\delta/2} q^{\delta}_{t}(x,y) \right)\right| \d y <\infty.  \]
Now, we intend to re-express the time-derivative of $q^{\delta}_{t}(x,y)$. To do so, we recall that, by Kolmogorov's equation for the SDE satisfied by squared Bessel processes, the following equation holds
\begin{equation}
\label{pde_densities_only_adjoint} 
\partial_{t} q^{\delta}_{t}(x,y) = (4-\delta) \, \partial_{y} q^{\delta}_{t}(x,y) + 2 y \, \partial^{2}_{y} q^{\delta}_{t}(x,y). 
\end{equation}
By the Leibniz formula, this equality can be rewritten  
\begin{equation}
\label{deriv_qt_unconstrained} 
\begin{split}
\partial_{t} \left(\frac{q^{\delta}_{t}(x,y)}{y^{\delta/2-1}} \right) = 2 \, \partial_{y} \left(  y^{2-\delta/2} \, \partial_{y} q^{\delta}_{t}(x,y)\right), \qquad t \geq 0, \quad x,y \geq 0. 
\end{split} 
\end{equation} 
Hence, applying the distribution $\mu_{\frac{\delta+1}{2}}$ in the variable $y$, and recalling Proposition \ref{thm_ibpf_mu}, we obtain 
\begin{equation}
\label{fst_drivative}
\begin{split}
\frac{\d}{\d t} \, Q^{\delta}_{x} \left( \sqrt{X_{t}} \right) =  -2 \, \Gamma \left( \frac{\delta+1}{2} \right) \left\langle \mu_{\frac{\delta-1}{2}},  y^{2-\delta/2} \, \partial_{y} q^{\delta}_{t}(x,y) \right\rangle. 
\end{split} 
\end{equation} 
We now intend to re-express the right-hand side of \eqref{fst_drivative} without the derivative $\partial_{y}$. To do so, we note that 
\begin{equation}
\label{leibniz}
 y^{2-\delta/2} \, \partial_{y} q^{\delta}_{t}(x,y) =  \partial_{y} \left( y^{2-\delta/2} \,  q^{\delta}_{t}(x,y) \right) - \left(2 - \frac{\delta}{2} \right) y^{1-\delta/2} \, q^{\delta}_{t}(x,y). 
\end{equation}
Hence, by Proposition \ref{thm_ibpf_mu}, we have
\begin{equation}
\label{exp_fst_trm}
\begin{split}
& \left\langle \mu_{\frac{\delta-1}{2}},  y^{2-\delta/2} \, \partial_{y} q^{\delta}_{t}(x,y) \right\rangle \\
&= - \left\langle \mu_{\frac{\delta-3}{2}},  y^{2-\delta/2} \, q^{\delta}_{t}(x,y) \right\rangle - \left(2 - \frac{\delta}{2} \right) \left\langle \mu_{\frac{\delta-1}{2}}, y^{1-\delta/2} \, q^{\delta}_{t}(x,y) \right\rangle.
\end{split}
\end{equation}
Now, applying Lemma \ref{lemma_mu} with $\alpha = \frac{\delta-3}{2}$ and $f$ the smooth function defined by
\[f(y) := y^{2-\delta/2} \, q^{\delta}_{t}(x,y), \quad y \in \mathbb{R}_{+},\] 
we can rewrite equation \eqref{exp_fst_trm} as
\begin{equation}
\label{re_expressed_fst_trm} 
\begin{split}
\left\langle \mu_{\frac{\delta-1}{2}},  y^{2-\delta/2} \, \partial_{y} q^{\delta}_{t}(x,y) \right\rangle &= - \left( \frac{\delta-3}{2} + 2 - \frac{\delta}{2} \right) \left\langle \mu_{\frac{\delta-1}{2}}, y^{1-\delta/2} \, q^{\delta}_{t}(x,y) \right\rangle \\
&= - \frac{1}{2} \left\langle \mu_{\frac{\delta-1}{2}}, y^{1-\delta/2} \, q^{\delta}_{t}(x,y) \right\rangle.
\end{split}
\end{equation}
We thus obtain
\begin{equation*}
\begin{split}
\frac{\d}{\d t} \, Q^{\delta}_{x} \left( \sqrt{X_{t}} \right) &= \Gamma \left( \frac{\delta+1}{2} \right)  \left\langle \mu_{\frac{\delta-1}{2}}, \, y^{1-\delta/2} \, q^{\delta}_{t}(x,y) \right\rangle
 \end{split} 
\end{equation*}
Hence, differentiating  in $t$ a second time, we obtain 
\begin{equation}
\label{snd_deriv}
\begin{split}
\frac{\d^{2}}{\d t^{2}} \, Q^{\delta}_{x,y} \left( \sqrt{X_{t}} \right) = \Gamma \left( \frac{\delta+1}{2} \right) \left\langle \mu_{\frac{\delta-1}{2}}, \partial_{t} \left( y^{1-\delta/2} \, q^{\delta}_{t}(x,y) \right) \right\rangle
 \end{split} 
\end{equation}
The fact that we can differentiate in $t$ inside the brackets is justified as before. Now, recalling the differential relation \eqref{deriv_qt_unconstrained}, and applying Proposition \ref{thm_ibpf_mu}, we obtain
\begin{equation*}
\label{exp_deriv_fst_part_unconst}
\left\langle \mu_{\frac{\delta-1}{2}}, \partial_{t} \left( y^{1-\delta/2} \, q^{\delta}_{t}(x,y) \right) \right\rangle = - 2  \left\langle \mu_{\frac{\delta-3}{2}},  y^{2-\delta/2} \, \partial_{y} q^{\delta}_{t}(x,y) \right\rangle 
\end{equation*}
As previously, we re-express the right-hand side without derivatives. By \eqref{leibniz} and by Proposition \ref{thm_ibpf_mu}, we see that
\begin{equation*}
\left\langle \mu_{\frac{\delta-3}{2}},  y^{2-\delta/2} \, \partial_{y} q^{\delta}_{t}(x,y) \right\rangle  = - \left\langle \mu_{\frac{\delta-5}{2}},  y^{2-\delta/2} \, q^{\delta}_{t}(x,y) \right\rangle - \left(2 - \frac{\delta}{2} \right) \left\langle \mu_{\frac{\delta-3}{2}}, y^{1-\delta/2} \,  q^{\delta}_{t}(x,y) \right\rangle.
\end{equation*}
Upon applying Lemma \ref{lemma_mu} to the first term in the right-hand side, we thus obtain
\[\begin{split}
\left\langle \mu_{\frac{\delta-3}{2}},  y^{2-\delta/2} \, \partial_{y} q^{\delta}_{t}(x,y) \right\rangle &= - \left( \frac{\delta-5}{2} + 2 - \frac{\delta}{2}\right) \left\langle \mu_{\frac{\delta-3}{2}}, y^{1-\delta/2} \, q^{\delta}_{t}(x,y) \right\rangle \\
&= \frac{1}{2} \left\langle \mu_{\frac{\delta-3}{2}}, y^{1-\delta/2} \, q^{\delta}_{t}(x,y) \right\rangle
 \end{split}\]
Finally, we thus obtain
\[\frac{\d^{2}}{\d t^{2}} \, Q^{\delta}_{x} \left( \sqrt{X_{t}} \right) = - \Gamma \left( \frac{\delta+1}{2} \right) \left\langle \mu_{\frac{\delta-3}{2}}, y^{1-\delta/2} \, q^{\delta}_{t}(x,y) \right\rangle,\]
which yields the claim.
\end{proof}

\section{Proof of a technical result from Section \ref{sect_dynamics}}\label{Proofs_dynamics}

\begin{proof}[Proof of Proposition \ref{conv_one_pot}]
We first show that the sequence of functionals $(V_{\epsilon, \eta})_{0< \eta < \epsilon < 1}$ is bounded in $L^2(\mu_2)$. The proof of the requested convergences will follow by similar arguments. For any $t>0$, we have
\begin{equation}
\label{norm_sgp}
\| \mathbf{Q}^2_t G_{\epsilon, \eta} \|^2_{L^2(\mu)} = \int_{[0,1]^2} h_r \, h_s \, \int_{\mathbb{R}^2} f_{\epsilon,\eta}(\|a\|) f_{\epsilon,\eta}(\|b\|) \Gamma_{r,s}(a_1,b_1) \, \Gamma_{r,s}(a_2,b_2) \d a \d b, 
\end{equation}
where, for all $(r,s) \in [0,1]$ and $(x,y) \in \mathbb{R}^{2}$
\begin{equation}
\label{def_gamma_rs} 
\Gamma_{r,s}(x,y) := \mathbb{E} \left[ \frac{1}{2 \pi \sqrt{q_{t}(r) q_{t}(s)}} \exp \left(- \frac{(x-Z(t,r))^{2}}{2q_{t}(r)} - \frac{(y-Z(t,s))^{2}}{2q_{t}(s)} \right) \right]. 
\end{equation}
Here $Z(t,r)$, $t \geq 0, r \in [0,1]$ denotes the solution to the heat equation started from $W$,
\[Z(t,r) := \int_{0}^{1} g_{t}(r,r') W(r') \d r', \qquad t \geq 0, r \in [0,1], \]
with $(W(r))_{r \in [0,1]}$ a Brownian bridge on $[0,1]$, and we are taking the expectation with respect to $W$. Recall from the proof of Prop. 5.4 of \cite{EladAltman2019} that $\Gamma_{r,s}$ is the density of the centered Gaussian law on $\mathbb{R}^{2}$ with covariance matrix
\[ M = \begin{pmatrix}
q_{\infty}(r) & q^{t}(r,s) \\
q^{t}(r,s) & q_{\infty}(s)
\end{pmatrix}. \]
Here and below, we fix a constant $\vartheta > 0$ such that the test function $h$ is supported in $[\vartheta,1-\vartheta]$. Recall from (5.13) and (5.14) in \cite{EladAltman2019} that 
\begin{equation}
\label{lower_bound_det_trivial}
\det(M) \geq \vartheta^{2} \, |r-s| 
\end{equation}
and
\begin{equation}
\label{lower_bound_det} 
\det(M) \geq c_{\vartheta} \, (t \wedge 1) ^{1/2},
\end{equation} 
for some $c_{\vartheta}>0$ depending only on $\vartheta$. Now, in view of \eqref{norm_sgp}, it suffices to bound, for all $(r,s) \in (0,1)^{2}$, the integral
\[ I(\epsilon,\eta) :=  \int_{\mathbb{R}^{2} \times \mathbb{R}^{2}} f_{\epsilon, \eta}(||a||) \, f_{\epsilon, \eta}(||b||) \, \Gamma_{r,s}(a_{1},b_{1}) \, \Gamma_{r,s}(a_{2},b_{2}) \d a \d b.\]
Since $h$ is supported in $[\vartheta, 1-\vartheta]$, we may assume that $(r,s) \in [\vartheta, 1-\vartheta]^{2}$. To bound $I(\epsilon,\eta)$, we first switch to polar coordinates by setting
\[ a = (x \cos(\theta), x \sin(\theta)), \quad b = (y \cos(\varphi), y \sin(\varphi)), \]
with $x,y \geq 0$ and $\theta, \varphi \in [0,2\pi]$. We then have
\[ I(\epsilon,\eta) :=  \int_{\mathbb{R}_{+} \times \mathbb{R}_{+}} f_{\epsilon, \eta}(x) \, f_{\epsilon, \eta}(y) \, x y \, G(x,y) \d x \d y,\]
where 
\begin{equation}
\label{def_g}
 G(x,y) := \int_{[0,2 \pi]^{2}} \Gamma_{r,s}(x \cos(\theta), y \cos(\varphi)) \, \Gamma_{r,s}(x \sin(\theta), y \sin(\varphi)) \, \d \theta \d \varphi.
\end{equation}
Hence
\[\begin{split}
I(\epsilon,\eta) &= 16 \int_{\mathbb{R}_{+} \times \mathbb{R}_{+}} \left( \frac{\mathbf{1}_{\{x \geq \epsilon\}}}{x^{2}} - \frac{2}{\epsilon} \rho_{\eta}(x) \right) \left( \frac{\mathbf{1}_{\{y \geq \epsilon\}}}{y^{2}} - \frac{2}{\epsilon} \rho_{\eta}(y) \right) \, G(x,y) \d x \d y \\
&= 16 \int_{[\epsilon, \infty) \times [\epsilon, \infty)} \frac{\d x}{x^{2}}  \frac{\d y}{y^{2}} \, \Big( G(x,y) - \int_{[0,\eta]} 2 \rho_{\eta}(w) G(x,w) \d w - \\
& - \int_{[0,\eta]} 2 \rho_{\eta}(z) G(z,y) \d z + \int_{[0,\eta]^{2}} 4 \rho_{\eta}(z) \rho_{\eta}(w) G(z, w) \d z \d w \Big) \\
&=  (16)^2 \int_{[\epsilon, \infty) \times [\epsilon, \infty)} \frac{\d x}{x^{2}}  \frac{\d y}{y^{2}} \, \int_{[0,\eta]^{2}} \d z \d w \, \rho_{\eta}(z) \, \rho_{\eta}(w) \, G_{z,w} (x,y), 
\end{split}\]
with
\begin{equation}
\label{renormalized_g}
G_{z,w} (x,y) := G(x,y) - G(x,w) - G(z,y) + G(z,w). 
\end{equation}
Here we used the fact that $\int_0^\eta \rho_\eta(z) \d z = 1/2$ to obtain the last line. Hence, it suffices to bound, for all $x,y \geq \epsilon$ and $z,w \in [0,\eta]$, the quantity $G_{z,w} (x,y)$. By the triangular inequality, this is obviously bounded by:
\[ 4 \, \underset{x,y \geq 0}{\sup} |G(x,y)|. \]
In turn, by \eqref{def_g}, we have
\[ \begin{split}
 \underset{x,y \geq 0}{\sup} |G(x,y)| &\leq 4 \pi^{2} \underset{z \in \mathbb{R}^{2}}{\sup} |\Gamma_{r,s}(z)|^{2} \\ 
&\leq 4 \pi^{2} \frac{1}{4 \pi^2 \, \det(M)} \\
&\leq C_\vartheta \, (t \wedge 1)^{-1/2}
\end{split} \]
where $M$ denotes the covariant matrix associated with $\Gamma_{r,s}$, and where the last bound follows from \eqref{lower_bound_det}. Therefore, for all $x,y \geq \epsilon$ and $z,w \in [0,\eta]$
\begin{equation}
\label{bound_bigx_bigy}
|G_{z,w} (x,y)| \leq C_{\vartheta} \, (t\wedge1)^{-1/2}.
\end{equation} 
Here and below we are denoting by $C_\vartheta$ a positive constant which depends only on $\vartheta$ and which may change from line to line. Note that although the above bound is sufficient when $x$ and $y$ are away from $0$, say $x,y \geq 1$, it will not be satisfactory when either $x$ or $y$ tend to $0$: in that regime, we need a stronger bound in order to cure the potential divergency created by the terms $\frac{1}{x^{2}}$ and $\frac{1}{y^{2}}$ in the integral $I(\epsilon, \eta)$. This will be done by harvesting the renormalisations appearing in \eqref{renormalized_g}. Note that this kind of reasoning is a reminiscence (in a tremendously simpler context) of the sophisticated methods used to obtain bounds on Feynman integrals, for instance in the theory of regularity structure (see \cite[Appendix A]{hairer2015class}, and \cite{chandra2016analytic}). First note that, for all $x,y \geq 0$, we have
\[ \begin{split}
\frac{\partial G}{\partial x}(x,y) &= \int_{[0,2 \pi]^{2}} \Big( \cos(\theta) \frac{\partial \Gamma_{r,s}}{\partial x}(x \cos(\theta), y \cos(\varphi)) \, \Gamma_{r,s}(x \sin(\theta), y \sin(\varphi)) \\
&+ \sin(\theta) \Gamma_{r,s}(x \cos(\theta), y \cos(\varphi)) \, \frac{\partial \Gamma_{r,s}}{\partial x}(x \sin(\theta), y \sin(\varphi)) \Big) \d \theta \d \varphi, 
\end{split}\]  
whence we in particular obtain 
\[\frac{\partial G}{\partial x}(0,y) = 0. \]
Therefore, for all $x,y,z,w$ as above, we have
\[ \begin{split}
|G_{z,w} (x,y)| &\leq |G(x,y) - G(z,y)| + |G(x,w) - G(z,w)| \\
&\leq 2 x^{2} \, \underset{\mathbb{R}^{2}}{\sup} \left| \frac{\partial^{2} G}{\partial x^{2}}\right|
\end{split} \]
where the second inequality follows from the fact that $0 \leq z \leq x $ due to our assumptions on $\epsilon$ and $\eta$. But, by Lemma 5.5 in \cite{EladAltman2019}, for all $k \geq 0$, $\frac{\partial^{k}}{\partial x^{k}} \Gamma_{r,s}$ is bounded uniformly by
\[ A_k \det(M)^{-\frac{1+k}{2}}, \] 
where $A_k>0$ depends only on $k$. Therefore, by the Leibniz formula, we deduce that there exists a universal constant $A>0$ such that 
\[\underset{\mathbb{R}^{2}}{\sup} \left| \frac{\partial^{2} G}{\partial x^{2}} \right| \leq A \det(M)^{-2}.\]
Hence, by \eqref{lower_bound_det}, we obtain
\[\underset{\mathbb{R}^{2}}{\sup} \left| \frac{\partial^{2} G}{\partial x^{2}} \right| \leq C_\vartheta \, (t \wedge 1)^{-1}. \]
We thus obtain the bound
\begin{equation}
\label{bound_smallx_bigy}
|G_{z,w} (x,y)| \leq C_{\vartheta} \, x^{2} \, (t \wedge 1)^{-1} 
\end{equation} 
for $x \in [0,1]$. This bound is appropriate in the regime where $y$ is large but $x$ is small. In the same way, we obtain the bound
\begin{equation}
\label{bound_bigx_smally}
|G_{z,w} (x,y)| \leq C_{\vartheta} \, y^{2} \, (t \wedge 1)^{-1}
\end{equation} 
for $y \in [0,1]$, which takes care of the case when $x$ is large but $y$ is small. There remains to obtain a bound for $x$ and $y$ which are both small. To do so, note that
\[ \begin{split}
 |G_{z,w} (x,y)|  &= \left| \int_{w}^{y} \int_{z}^{x} \frac{\partial^{2} G}{\partial x \partial y} (u,v) \d u \d v \right| \\
&\leq x y \, \underset{[0,x] \times [0,y]}{\sup} \left| \frac{\partial^{2} G}{\partial x \partial y} \right|.  
 \end{split}\]
Now, differentiating the expression for $G$ in \eqref{def_g}, we obtain, for all $x,y \geq 0$
\[ \frac{\partial^{2} G}{\partial x \partial y}(x,0) = \frac{\partial^{2} G}{\partial x \partial y}(0,y) = 0.\]
Similarly, we have
\[ \frac{\partial^{3} G}{\partial x^{2} \partial y}(x,0) = \frac{\partial^{3} G}{\partial x \partial y^{2}}(0,y)=0. \]
Therefore, we have, for all $x,y \in [0,1]$
\[ \left| \frac{\partial^{2} G}{\partial x \partial y}(x,y) \right| \leq x y \, \underset{[0,1]^{2}}{\sup} \left| \frac{\partial^{4} G}{\partial x^{2} \partial y^{2}}(x,y) \right|, \]
whence we obtain
\[ \begin{split}
 |G_{z,w} (x,y)| &\leq x^{2} y^{2} \, \underset{[0,1]^{2}}{\sup} \left| \frac{\partial^{4} G}{\partial x^{2} \partial y^{2}} \right| \\
&\leq K \, x^{2} y^{2} \,\det(M)^{-3},     
 \end{split}\]
where the last bound follows from Lemma 5.5 in \cite{EladAltman2019} and the Leibniz formula, with $K>0$ a universal constant. But, interpolating between the lower bounds provided by \eqref{lower_bound_det_trivial} and \eqref{lower_bound_det}, we have, for any $\alpha \in (0,1)$
\[\begin{split}
\det(M)^{-3} &\leq (\vartheta^{2}|r-s|)^{\alpha-1}(c_{\vartheta} \, \sqrt{t \wedge 1})^{-(\alpha+2)} \\
\end{split}\]
Choosing for example $\alpha=1/2$, we obtain 
\[\det(M)^{-3}  \leq C_\vartheta \, |r-s|^{-1/2} \, (t \wedge 1)^{-5/4}.\]
Hence 
\begin{equation}
\label{bound_xsmall_ysmall}
 |G_{z,w} (x,y)| \leq
  C_{\vartheta} \, x^{2} y^{2} \, |r-s|^{-1/2} \, (t \wedge 1)^{-5/4} 
\end{equation}
for all $x,y \in [0,1]$.
%
%
%
%
Finally, we can now bound $I(\epsilon,\eta)$. To do so, we decompose this integral as follows:
\[ I(\epsilon,\eta) = I_{[1,\infty]^{2}} + I_{[\epsilon,1) \times [1,\infty]} + I_{[1,\infty] \times [\epsilon,1)} + I_{[\epsilon,1)^{2}},\]
where, for $A \subset \mathbb{R}_{+}^{2}$, $I_{A}$ denotes the integral
\[  \int_{A} \frac{\d x}{x^{2}}  \frac{\d y}{y^{2}} \, \int_{[0,\eta]^{2}} \d z \d w \, \rho_{\eta}(z) \rho_{\eta}(w) \, G_{z,w} (x,y).  \]
We start by obtaining a bound for $I_{[1,\infty]^{2}}$. By \eqref{bound_bigx_bigy}, and recalling that $\int_0^\eta \rho_{\eta}(x) \d x = 1/2$, we have
\[|I_{[1,\infty]^{2}}| \leq   C_\vartheta \, (t \wedge 1)^{-1/2}. \]  
On the other hand, by \eqref{bound_smallx_bigy}, we have
\[ \begin{split} |I_{[\epsilon,1) \times [1,\infty]}| &\leq C_{\vartheta} \, (t \wedge 1)^{-1} \int_{\epsilon}^{1} \d x \, \int_{1}^{\infty} \frac{\d y}{y^{2}} \\
&\leq C_{\vartheta} \, (t \wedge 1)^{-1}.
\end{split} \] 
Similarly, by \eqref{bound_bigx_smally}, we have
\[ |I_{[1,\infty] \times [\epsilon,1)}| \leq C_{\vartheta} \, (t \wedge 1)^{-1}.\] 
Finally, by \eqref{bound_xsmall_ysmall}, we have
\[ \begin{split} |I_{[\epsilon,1) \times [1,\infty]}| &\leq C_{\vartheta} \, |r-s|^{-1/2} \, (t \wedge 1)^{-5/4} \int_{\epsilon}^{1} \d x \, \int_{\epsilon}^{1} \d y \\
&\leq C_{\vartheta} \, |r-s|^{-1/2} (t \wedge 1)^{-5/4}.
\end{split} \] 
Putting these bounds together finally yields 
\[ |I(\epsilon,\eta)| \leq C_{\vartheta} \, |r-s|^{-1/2} \, (t \wedge 1)^{-5/4} \]  
for all $\eta < \epsilon <1$ and $t >0$. Therefore, recalling \eqref{norm_sgp}, we obtain
\[ \| \mathbf{Q}^2_t G_{\epsilon, \eta} \|^2_{L^2(\mu_2)} \leq C_\vartheta \, (t \wedge 1)^{-5/4} \, \|h \|_{\infty}^2 \int_{[0,1]^2} |r-s|^{-1/2} \d r \, \d s, \]
where the last integral is finite. Hence
\[ \| \mathbf{Q}^2_t G_{\epsilon, \eta} \|_{L^2(\mu_2)} \leq C(\vartheta,h) \, (t \wedge 1)^{-5/8}, \]
where $C(\vartheta,h)$ is a constant which depends only on $\vartheta$ and $h$. Therefore
\[ \| V_{\epsilon, \eta} \|_{L^2(\mu_2)} \leq C(\vartheta,h) \, \int_0^\infty e^{-t} \, (t \wedge 1)^{-5/8} \d t < \infty.\]
Thus, $(V_{\epsilon, \eta})_{0< \eta < \epsilon < 1}$ is bounded in $L^2(\mu_2)$. Reasoning similarly to bound $V_{\epsilon, \eta}-V_\epsilon$ and $V_\epsilon - V$, we deduce by dominated convergence that 
\[V_{\epsilon, \eta} \underset{\eta \to 0}{\longrightarrow} V_{\epsilon} \]
and
\[V_{\epsilon} \underset{\epsilon \to 0}{\longrightarrow} V \] 
in $L^2(\mu_2)$. 
There remains to show that these convergences hold in $D(\Lambda^2)$. To do so, for all $\eta < \epsilon <1$, we bound $\|\overline{\nabla} V_{\epsilon,\eta}\|_{L^2}$, where $\| \cdot \|_{L^2}$ stands for the norm in $L^2(\mathbb{H}_2,\mu_2; \mathbb{H}_2)$. We have
\[\begin{split}
&\|\overline{\nabla} Q^2_t G_{\epsilon,\eta}\|^2_{L^2} = \int_{[0,1]^2} \d r \, \d s \, h_r \, h_s \int_{\mathbb{R}^2\times \mathbb{R}^2} \d a \, \d b \, f_{\epsilon,\eta}(\|a\|) \, f_{\epsilon,\eta}(\|b\|) \\
& \left(\frac{\partial^2 \Gamma_{r,s}}{\partial x \, \partial y}(a_1,b_1) \, \Gamma_{r,s}(a_2,b_2) + \Gamma_{r,s}(a_1,b_1) \, \frac{\partial^2 \Gamma_{r,s}}{\partial x \, \partial y}(a_2,b_2)\right).
\end{split}\]
Therefore it suffices to bound, for all $(r,s) \in [\vartheta,1-\vartheta]^{2}$, the integral
\[ I^{(1)}(\epsilon,\eta) :=  \int_{\mathbb{R}_+ \times \mathbb{R}_+} f_{\epsilon, \eta}(x) f_{\epsilon, \eta}(y) \, x y \, G^{(1)}(x,y) \d x \d y,\]
where 
\[\begin{split}
G^{(1)}(x,y) :=&  \int_{[0,2 \pi]^{2}} \Big( \frac{\partial^2 \Gamma_{r,s}}{\partial x \, \partial y} (x \cos(\theta), y \cos(\varphi)) \, \Gamma_{r,s}(x \sin(\theta), y \sin(\varphi)) \\
&+ \Gamma_{r,s}(x \cos(\theta), y \cos(\varphi)) \, \frac{\partial^2 \Gamma_{r,s}}{\partial x \, \partial y} (x \sin(\theta), y \sin(\varphi)) \Big) \d \theta \d \varphi. 
\end{split}\]
Reasoning as above, we can rewrite $I^{(1)}(\epsilon,\eta)$ as 
\[\begin{split}
I(\epsilon,\eta) =  (16)^2 \int_{[\epsilon, \infty) \times [\epsilon, \infty)} \frac{\d x}{x^{2}}  \frac{\d y}{y^{2}} \, \int_{[0,\eta]^{2}} \d z \d w \, \rho_{\eta}(z) \, \rho_{\eta}(w) \, G^{(1)}_{z,w} (x,y), 
\end{split}\]
where
\[G^{(1)}_{z,w} (x,y) := G^{(1)}(x,y) - G^{(1)}(x,w) - G^{(1)}(z,y) + G^{(1)}(z,w). \]
By the triangular inequality we have, for all $x,y,z,w \in \mathbb{R}$
\[|G^{(1)}_{z,w} (x,y)| \leq 4 \, \underset{\mathbb{R}_+^2}{\sup} |G^{(1)}|. \]
Now, the supremum above is bounded by
\[ 2 \, \underset{\mathbb{R}^2}{\sup} \left| \frac{\partial^2}{\partial x \, \partial y} \Gamma_{r,s} \right| \,  \underset{\mathbb{R}^2}{\sup} \left| \Gamma_{r,s} \right|.\]
By Lemma 5.5 in \cite{EladAltman2019}, this in turn is bounded by
\[K \det(M)^{-2} \]
for some universal constant $K>0$. In virtue of \eqref{lower_bound_det}, we thus deduce that
\[|G^{(1)}_{z,w} (x,y)| \leq C_\vartheta \, (t\wedge1)^{-1}. \]
On the other hand, noting that, for all $y \in \mathbb{R}$
\[\frac{\partial G^(1)}{\partial x}(0,y) = 0 \]
we have, for all $x \in [0,1]$ and $y,z,w  \in \mathbb{R}_+$, the bound
\[ |G^{(1)}_{z,w} (x,y)| \leq 2 x^2 \, \underset{\mathbb{R}_+^2}{\sup} \left| \frac{\partial^2 G^(1)}{\partial x^2} \right|. \]
But, by Lemma 5.5 in \cite{EladAltman2019}, the Leibniz formula,  and the bound \eqref{lower_bound_det}, we have
\[\underset{\mathbb{R}_+^2}{\sup} \left| \frac{\partial^2 G^(1)}{\partial x^2} \right| \leq C_\vartheta \, (t \wedge 1)^{-3/2}. \]  
Hence, when $x \in [0,1]$, we have the bound
\[|G^{(1)}_{z,w} (x,y) | \leq C_\vartheta \, (t\wedge1)^{-3/2} \, x^2.\] 
Similarly, when $y \in [0,1]$, we have
\[|G^{(1)}_{z,w} (x,y) | \leq C_\vartheta \, (t\wedge1)^{-3/2} \, y^2.\]
Finally, when $x,y \in [0,1]^2$, one has
\[ \begin{split}
 |G^{(1)}_{z,w} (x,y)| &\leq x^{2} y^{2} \, \underset{\mathbb{R}_+^2}{\sup} \left| \frac{\partial^{4} G^{(1)}}{\partial x^{2} \partial y^{2}} \right| \\ 
&\leq K \, x^{2} y^{2} \, \det(M)^{-4},  
\end{split}\]
where the last bound follows from Lemma 5.5 in \cite{EladAltman2019} and the Leibniz formula, with $K>0$ a universal constant. 
Now, by interpolation of \eqref{lower_bound_det_trivial} and \eqref{lower_bound_det}, we have
\[  \det(M)^{-4} \leq (\vartheta^{2}|r-s|)^{-1/2}(c_{\vartheta} \sqrt{t \wedge 1})^{-7/2}. \]
Therefore, for all $x,y \in [0,1]$, we have
\[ |G^{(1)}_{z,w} (x,y)| \leq C_\vartheta \, (t \wedge 1)^{-7/4} \, |r-s|^{-1/2} \, x^2 y^2. \]
We now put together all these estimates by writing
\[ I^{(1)}(\epsilon,\eta) = I^{(1)}_{[1,\infty]^{2}} + I^{(1)}_{[\epsilon,1) \times [1,\infty]} + I^{(1)}_{[1,\infty] \times [\epsilon,1)} + I^{(1)}_{[\epsilon,1)^{2}},\]
where, for $A \subset \mathbb{R}_{+}^{2}$, $I^{(1)}_{A}$ denotes the integral
\[  \int_{A} \frac{\d x}{x^{2}}  \frac{\d y}{y^{2}} \, \int_{[0,\eta]^{2}} \d z \d w \, \rho_{\eta}(z) \rho_{\eta}(w) \, G^{(1)}_{z,w} (x,y).  \]
The previous estimates yield
\[I^{(1)}_{[1,\infty]^{2}} \leq C_\vartheta \, (t\wedge1)^{-1}\]
as well as
\[I^{(1)}_{[\epsilon,1) \times [1,\infty]} \leq  C_\vartheta \, (t\wedge1)^{-3/2}, \quad I^{(1)}_{[1,\infty] \times [\epsilon,1)} \leq C_\vartheta \, (t\wedge1)^{-3/2},\]
and
\[ I^{(1)}_{[\epsilon,1)^{2}} \leq C_\vartheta \, (t \wedge 1)^{-7/4} \, |r-s|^{-1/2}. \]
Therefore, we obtain 
\[ I^{(1)}(\epsilon,\eta) \leq C_\vartheta \, (t \wedge 1)^{-7/4} \, |r-s|^{-1/2}\]
and, since $ \int_{[0,1]^2} |r-s|^{-1/2} \d r \, \d s < \infty$, we deduce finally that 
\[  \|\overline{\nabla} Q^2_t G_{\epsilon,\eta}\|^2_{L^2} \leq C(\vartheta,h) \, (t \wedge 1)^{-7/4}, \]
where $C(\vartheta,h) \in (0,\infty)$ depends only on $\vartheta$ and $h$. Therefore, 
\[\| \overline{\nabla} V_{\epsilon, \eta} \|_{L^2} \leq \sqrt{C(\vartheta,h)} \, \int_0^\infty e^{-t} (t \wedge 1)^{-7/8} \d t < \infty. \]
Hence, $(\overline{\nabla} V_{\epsilon, \eta})_{\epsilon > \eta> 0}$ is uniformly bounded in $L^2(\mathbb{H}_2,\mu_2; \mathbb{H}_2)$. Similar bounds on $\nabla V_{\epsilon,\eta} - \nabla V_{\epsilon} $ and on $\nabla V_{\epsilon} - \nabla V$ yield, by dominated convergence, 
\[\overline{\nabla} V_{\epsilon, \eta} \underset{\eta \to 0}{\longrightarrow} \nabla V_{\epsilon} \]
and
\[\overline{\nabla} V_{\epsilon} \underset{\epsilon \to 0}{\longrightarrow} \nabla V \] 
in $L^2(\mathbb{H}_2,\mu_2; \mathbb{H}_2)$. The proposition is proved.
\end{proof} 

\bibliographystyle{abbrvnat}
\bibliography{Bessel_SPDEs_with_general_boundary_conditions}

\end{document}